\documentclass[10pt, a4paper, oneside, reqno]{amsart}

\addtolength{\voffset}{0cm} 
\addtolength{\textheight}{1cm} 
\addtolength{\hoffset}{-2cm}
\addtolength{\textwidth}{4cm}

\setlength{\parskip}{1mm}
\linespread{1.12}



\makeatletter
\def\@settitle{\begin{center}%
		\baselineskip14\p@\relax
		\normalfont\LARGE\scshape\bfseries
		\@title
	\end{center}%
}
\makeatother

\makeatletter

\def\subsection{\@startsection{subsection}{2}%
	\z@{.5\linespacing\@plus.7\linespacing}{.5\linespacing}%
	{\normalfont\large\bfseries}}

\def\subsubsection{\@startsection{subsubsection}{3}%
	\z@{.5\linespacing\@plus.7\linespacing}{.5\linespacing}%
	{\normalfont\itshape}}




\usepackage[usenames, dvipsnames]{color}
\definecolor{darkblue}{rgb}{0.0, 0.0, 0.45}

\usepackage[colorlinks	= true,
			raiselinks	= true,
			linkcolor	= darkblue, 
			citecolor	= Mahogany,
			urlcolor	= ForestGreen,
			pdfauthor	= {Peyman Mohajerin Esfahani},
			pdftitle	= {},
			pdfkeywords	= {},
			pdfsubject	= {},
			plainpages	= false]{hyperref}

\usepackage{dsfont,amssymb,amsmath,subfigure, graphicx,enumitem} 
\usepackage{amsfonts,dsfont,mathtools, mathrsfs,amsthm} 
\usepackage[amssymb, thickqspace]{SIunits}
\usepackage{algorithm,algorithmicx,fancyhdr}

\allowdisplaybreaks
\date{\today}
\theoremstyle{theorem}
\newtheorem{Thm}{Theorem}[section]
\newtheorem{Prop}[Thm]{Proposition}

\newtheorem{Lem}[Thm]{Lemma}
\newtheorem{Cor}[Thm]{Corollary}
\newtheorem{As}[Thm]{Assumption}

\newtheorem{Def}[Thm]{Definition}

\newtheorem{Rem}[Thm]{Remark}

\theoremstyle{remark}
\newtheorem{Ex}[Thm]{Example}

\newcommand{\key}[1]{\textbf{Keywords.} #1}


\newcommand{\Min}{\min\limits_}
\newcommand{\Sup}{\sup\limits_}
\newcommand{\Inf}{\inf\limits_}

\newcommand{\PP}{\mathds{P}}
\newcommand{\EE}{\mathds{E}}
\newcommand{\R}{\mathbb{R}}

\newcommand{\N}{\mathbb{N}}
\newcommand{\borel}{\mathfrak{B}}

\newcommand{\Llra}{\Longleftrightarrow}
\newcommand{\ra}{\rightarrow}

\newcommand{\ind}{\mathds{1}}
\newcommand{\Let}{\coloneqq}

\newcommand{\diff}{\mathrm{d}}

\newcommand{\wt}{\widetilde}

\newcommand{\opt}{^\star}
\newcommand{\ball}[2]{\mathsf{B}_{#2}(#1)}		

\newcommand{\eps}{\varepsilon}
\DeclareMathOperator{\e}{e}

\newcommand{\X}{\mathbb{X}}
\newcommand{\C}{\mathbb{C}}
\newcommand{\B}{\mathbb{B}}
\newcommand{\Y}{\mathbb{Y}}
\newcommand{\U}{\mathbb{U}}
\newcommand{\conv}{\mathsf{conv}}

\newcommand{\Lip}{\mathscr{L}}
\newcommand{\dir}[1]{\delta_{#1}}

\newcommand{\inner}[2]{\big \langle #1, #2 \big \rangle}

\newcommand{\set}[1]{\mathbb{#1}}
\DeclareMathOperator{\st}{s.t.}

\newcommand{\Prim}{\mathcal{P}}
\newcommand{\Dual}{\mathcal{D}}
\newcommand{\op}{\mathcal{A}}
\newcommand{\opn}{\mathcal{A}_{n}}
\newcommand{\Jp}{J}
\newcommand{\Jpn}{J_n}
\newcommand{\Jpnd}{J_n(\delta)}
\newcommand{\JpnN}{J_{n,N}}
\newcommand{\Jd}{\wt{J}}
\newcommand{\Jdn}{\wt{J}_n}
\newcommand{\Jdnd}{\wt{J}_n(\delta)}
\newcommand{\cone}{\mathbb{K}}

\newcommand{\proj}{\Pi}
\newcommand{\comp}[1]{\overline {#1}}

\newcommand{\Jdnr}{\wt{J}_{n,\eta}}

\newcommand{\Jac}{J^{\mathrm{AC}}}
\newcommand{\Jdc}{J^{\mathrm{DC}}}

\newcommand{\Jacn}{J^{\mathrm{AC}}_n}
\newcommand{\JacnN}{J^{\mathrm{AC}}_{n,N}}

\newcommand{\Jdcn}{J^{\mathrm{DC}}_n}

\newcommand{\Jnlb}{J_{n,\eta}^{\rm LB}}
\newcommand{\Jnub}{J_{n,\eta}^{\rm UB}}
\newcommand{\Jneta}{\wt{J}_{n,\eta}}

\newcommand{\Jlb}{J_n^{\rm LB}}

\newcommand{\yeta}{y_{\eta}^{\star}}

\newcommand{\yn}{y^\star_n}

\newcommand{\ext}{\mathcal{E}}
\newcommand{\Cont}{\mathcal{C}}
\newcommand{\Prob}{\mathcal{P}}
\newcommand{\Meas}{\mathcal{M}}
\newcommand{\Sigalg}{\mathcal{G}}
\newcommand{\Func}{\mathcal{F}}
\newcommand{\lip}{\mathrm{L}}
\newcommand{\wass}{\mathrm{W}}

\newcommand{\uball}{\mathsf{B}_n}

\newcommand{\ynb}{{\theta_{\mathcal{D}}}}
\newcommand{\xnb}{\theta_{\mathcal{P}}}

\newcommand{\Rnorm}{\mathfrak{R}}
\newcommand{\Uball}{\U_n}

\newcommand{\cnew}{{\mathbf{c}}}

\newcommand{\wh}{\widehat}

\newcommand{\order}{\mathcal{O}}
\newcommand{\bdot}{\LargerCdot}
\newcommand{\ratio}{\varrho_n}
\newcommand{\T}{\mathds{T}}

\newcommand{\Leb}{\lambda}
\newcommand{\Ab}{\mathscr{A}}
\newcommand{\Yb}{\mathscr{Y}}

\newcommand{\cost}{\psi}
\newcommand{\NN}{\mathsf{N}}

\newcommand{\gmax}{g_{\rm max}}

\usepackage{makecell}
\usepackage{todonotes}

\usepackage{mathrsfs}



\newcommand{\geqc}[1]{\succeq_{#1}}


\usepackage{tikz}
\usetikzlibrary{fadings,shapes.arrows,shadows}   
\usetikzlibrary{chains}
\usetikzlibrary{fit}
\usepackage{pgflibraryarrows}		
\usepackage{pgflibrarysnakes}		
\usepackage{xcolor}
\usepackage{epsfig}
\usetikzlibrary{shapes.symbols,patterns} 
\usepackage{pgfplots}

\definecolor{darkgreen}{rgb}{0.0, 0.42, 0.24}

\newcommand{\K}{\mathbb{K}}

\newcommand{\drv}{\ensuremath{\mathrm{d}}}
\newcommand{\indic}[1]{\ensuremath{\boldsymbol{1}_{#1}}}

\newcommand{\Borel}[1]{\ensuremath{\mathcal{B}\!\left(#1\right)}}


\newcommand{\Rp}{\mathbb{R}_{+}}

\newcommand{\Probpi}[1]{\ensuremath{\mathbb{P}^{\pi}_{\nu}\!\left(\vphantom{\big|}#1\vphantom{\big|}\right)}}
\newcommand{\Expecpi}[1]{\ensuremath{\mathbb{E}^{\pi}_{\nu}\!\left[\vphantom{\big|}#1\vphantom{\big|}\right]}}

\newcommand{\Kb}{\mathcal{K}}

\newcommand{\supp}[2]{\ensuremath{\sigma_{#1}\!\left({#2}\right)}}

\newcommand*{\LargerCdot}{\raisebox{-0.45ex}{\scalebox{1.75}{$\cdot$}}}

\graphicspath{{Fig/}}

\title[]{From Infinite to Finite Programs: Explicit Error Bounds with Applications to Approximate Dynamic Programming}

\author[P. Mohajerin Esfahani]{Peyman Mohajerin Esfahani}
\author[Second]{Tobias Sutter}
\author[D. Kuhn]{Daniel Kuhn}
\author[Second]{John Lygeros}

\thanks{The authors are with the Delft Center for Systems and Control, TU Delft, The Netherlands ({\tt P.MohajerinEsfahani@tudelft.nl}), the Automatic Control Laboratory, ETH Zurich, Switzerland, ({\tt \{sutter,lygeros\}@control.ee.ethz.ch}), and the Risk Analytics and Optimization Chair, EPFL, Switzerland ({\tt daniel.kuhn@epfl.ch})}

\begin{document}
	\maketitle
	
	\begin{abstract}
		We consider linear programming (LP) problems in infinite dimensional spaces that are in general computationally intractable. Under suitable assumptions, we develop an approximation bridge from the infinite-dimensional LP to tractable finite convex programs in which the performance of the approximation is quantified explicitly. To this end, we adopt the recent developments in two areas of randomized optimization and first order methods, leading to a priori as well as a posteriori performance guarantees. We illustrate the generality and implications of our theoretical results in the special case of the long-run average cost and discounted cost optimal control problems for Markov decision processes on Borel spaces. The applicability of the theoretical results is demonstrated through a constrained linear quadratic optimal control problem and a fisheries management problem.
		
	\end{abstract}
	
	\key{infinite-dimensional linear programming, Markov decision processes, approximate dynamic programming, randomized and convex optimization}
	
	\section{Introduction} \label{sec:introduction}
	Linear programming (LP) problems in infinite dimensional spaces appear in, among other areas, engineering, economics, operations research and probability theory  \cite{ref:Luenberger_OptVecS, ref:Anderson-87, ref:Lasserre-11}. Infinite LPs offer remarkable modeling power, subsuming general finite dimensional optimization problems and the generalized moment problem as special cases. They are, however, often computationally formidable, motivating the study of approximations schemes.
	
	A particularly rich class of problems that can be modeled as infinite LPs involves Markov decision processes (MDP) and their optimal control. More often than not, it is impossible to obtain explicit solutions to MDP problems, making it necessary to resort to approximation techniques. Such approximations are the core of a methodology known as \emph{approximate dynamic programming} \cite{ref:Bertsekas-96, ref:Bertsekas-12}. Interestingly, a wide range of optimal control problems involving MDP can be equivalently expressed as \emph{static} optimization problems over a closed convex set of measures, more specifically, as infinite LPs \cite{ref:Hernandez-96, ref:Hernandez-99, ref:chapter:Lerma}. This LP reformulation is particularly appealing for dealing with unconventional settings involving additional constraints \cite{ref:Hernandez-03, ref:Borkar-08}, secondary costs \cite{ref:Dufour-14}, information-theoretic considerations \cite{ref:Raginsky-15}, and reachability problems \cite{ref:Kariot-13,ref:Moh:motion-16}. In addition, the infinite LP reformulation allows one to leverage the developments in the optimization literature, in particular convex approximation techniques, to develop approximation schemes for MDP problems. This will also be the perspective adopted in the present article.
	
	Approximation schemes to tackle infinite LPs have historically been developed for special classes of problems, e.g., the general capacity problem \cite{ref:Lai-92}, or the generalized moment problem \cite{ref:Lasserre-11}. The literature on control of MDP with infinite state or action spaces mostly concentrates on approximation schemes with asymptotic performance guarantees \cite{ref:Hernandez-99, ref:Hernandez-98}, see also the comprehensive book \cite{ref:Kush-03} for controlled stochastic differential equations and \cite{ref:PrandiniHu-2007,ref:MohChatLyg-15} for reachability problems in a similar setting. From a practical viewpoint, a challenge using these schemes is that the convergence analysis is not constructive and does not lead to explicit error bounds. 
	A wealth of approximation schemes have been proposed in the literature under the names of approximate dynamic programming \cite{ref:Bert-75}, neuro-dynamic programming \cite{ref:Bertsekas-96}, reinforcement learning \cite{ref:Konda-03,ref:TsitVan-97}, and value and/or policy iteration \cite{ref:Bertsekas-12}. Most, however, deal with discrete (finite or at most countable) state and action spaces, while approximation over uncountable spaces remains largely unexplored. 
	
	The MDP literature on explicit approximation errors in uncountable settings can, roughly speaking, be divided to two groups in terms of the performance criteria considered: discounted cost, and average cost. Of the two, the discounted cost setting has received more attention as the corresponding dynamic programming operator is a contraction, a useful property to obtain a convergence rate for the approximation error. Examples include the linear programming approach \cite{ref:FarVan-03,ref:FarVan-04}, and also a recent series of works \cite{ref:Dufour-13, ref:Dufour-14, ref:Dufour2-15} on approximating a probability measure that underlies the random transitions of the dynamics of the system using different discretization procedures. Long-run average cost problems introduce new challenges due to loosing the contraction property. The authors in \cite{ref:Dufour-15} develop approximation schemes leading to finite but non-convex optimization problems, while \cite{ref:Saldi-15} investigates the convergence rate of the finite-state approximation to the original (uncountable) MDP problem. 
	
	The approach presented in this article tackles a class of general infinite LPs that, as a special case, cover both long-run discounted and average cost performance criteria in the optimal control of MDP. The resulting approximation is based on finite convex programs that are different from the existing schemes. Closest in spirit to our proposed approximation is the linear programming approach based on constraint sampling in \cite{ref:FarVan-03,ref:FarVan-04,ref:SutMoh-14}. Unlike these works, however, we introduce an additional norm constraint that effectively acts as a \emph{regularizer}. We study in detail the conditions under which this regularizer can be exploited to bound the optimizers of the primal and dual programs, and hence provide an explicit approximation error for the proposed solution. 
			
	The proposed approximation scheme involves a restriction of the decision variables from an infinite dimensional space to a finite dimensional subspace, followed by the approximation of the infinite number of constraints by a finite subset; we develop two complementary methods for performing the latter step. The structure of the article is illustrates in Figure~\ref{fig:overview}, where the contributions are summarized as follows:

	\begin{figure}[t!]
		\centering
		\scalebox{.85}{
\def \sca{1}

\def \xb{3.9*\sca}
\def \yb{1.3*\sca}

\def \r{6.5*\sca}

\def \y{3.3*\sca} 
\def \yy{2*\sca} 

\def \la{0.2*\sca}

\def \di{0.2*\sca}

\def \backx{-0.1*\sca}
\def \backy{-1.4*\sca}
\def \back{0.2*\sca}

\def \buf{0.35*\sca}

\begin{tikzpicture}[scale=1,auto, node distance=1cm,>=latex']

\shade[rounded corners,bottom color = darkgreen!50, top color = darkgreen!25] (\r-\buf,\yb+\buf)--(\r+\r+\xb+\buf,\yb+\buf)--(\r+\r+\xb+\buf,-0.5*\y+3.6*\buf)--(\r-\buf,-0.5*\y+3.6*\buf)--cycle;

\shade[rounded corners,bottom color = violet!50, top color = violet!25] (\r-\buf,-0.6*\y+\buf)--(\r+\r+\xb+\buf,-0.6*\y+\buf)--(\r+\r+\xb+\buf,-1.0*\y-\buf)--(\r-\buf,-1.0*\y-\buf)--cycle;

\shade[rounded corners,bottom color = blue!50, top color = blue!25] (\r-\buf,-1.6*\y+\buf)--(\r+\r+\xb+\buf,-1.6*\y+\buf)--(\r+\r+\xb+\buf,-2*\y-\buf)--(\r-\buf,-2*\y-\buf)--cycle;


\shade[rounded corners,bottom color = gray!20, top color = gray!10] (0,0)--(\xb,0)--(\xb,\yb)--(0,\yb)--cycle;
\node at (0.5*\xb,0.7*\yb) {discrete time};
\node at (0.5*\xb,0.3*\yb) {MDP};
\shade[rounded corners,bottom color = gray!20, top color = gray!10] (\r,0)--(\r+\xb,0)--(\r+\xb,\yb)--(\r,\yb)--cycle;
\node at (\r+0.5*\xb,0.7*\yb) {infinite LP};
\node at (\r+0.5*\xb,0.3*\yb) {$\eqref{primal-inf}$: \ $\Jp$};
\shade[rounded corners,bottom color = gray!20, top color = gray!10] (\r,-\y)--(\r+\xb,-\y)--(\r+\xb,\yb-\y)--(\r,\yb-\y)--cycle;
\node at (\r+0.5*\xb,0.7*\yb-\y) {robust program};
\node at (\r+0.5*\xb,0.3*\yb-\y) {$\eqref{primal-n}$: \ $\Jpn$};
\shade[rounded corners,bottom color = gray!20, top color = gray!10] (\r,-\y-\y)--(\r+\xb,-\y-\y)--(\r+\xb,\yb-\y-\y)--(\r,\yb-\y-\y)--cycle;
\node at (\r+0.5*\xb,0.7*\yb-\y-\y) {scenario program};
\node at (\r+0.5*\xb,0.3*\yb-\y-\y) {$\eqref{primal-n,N}$: \ $\JpnN$};

\shade[rounded corners,bottom color = gray!20, top color = gray!10] (\r+\r,-\y)--(\r+\r+\xb,-\y)--(\r+\r+\xb,\yb-\y)--(\r+\r,\yb-\y)--cycle;
\node at (\r+\r+0.5*\xb,0.7*\yb-\y) {semi-infinite program};
\node at (\r+\r+0.5*\xb,0.3*\yb-\y) {$\eqref{dual-n}$: \ $\Jdn$};

\shade[rounded corners,bottom color = gray!20, top color = gray!10] (\r+\r,-\y-\y)--(\r+\r+\xb,-\y-\y)--(\r+\r+\xb,\yb-\y-\y)--(\r+\r,\yb-\y-\y)--cycle;
\node at (\r+\r+0.5*\xb,0.7*\yb-\y-\y) {regularized program};
\node at (\r+\r+0.5*\xb,0.3*\yb-\y-\y) {$\eqref{dual-n-eta}$: \ $\Jdnr$};

\shade[rounded corners,bottom color = gray!20, top color = gray!10] (\r+0.5*\r,-\y-\y-\y)--(\r+0.5*\r+\xb,-\y-\y-\y)--(\r+0.5*\r+\xb,\yb-\y-\y-\y)--(\r+0.5*\r,\yb-\y-\y-\y)--cycle;
\node at (\r+0.5*\r+0.5*\xb,0.7*\yb-\y-\y-\y) {prior $\&$ posterior error};
\node at (\r+0.5*\r+0.5*\xb,0.3*\yb-\y-\y-\y) {$\Jdnr - \JpnN$};

  \draw[>=latex,<->,line width=1.2mm,gray!25] (\xb,0.5*\yb) -- (\r,0.5*\yb);
    \draw[>=latex,<->,line width=1.2mm,gray!25] (\xb+\r,-\y+0.5*\yb) -- (\r+\r,-\y+0.5*\yb);
  \draw[>=latex,<->,dashed, line width=1.2mm,gray!25] (\r+0.5*\xb,0) -- (\r+0.5*\xb,-\y+\yb);
 \draw[>=latex,<->,dashed, line width=1.2mm,gray!25] (\r+0.5*\xb,-\y+0.25*\la) -- (\r+0.5*\xb,-\y+0.25*\la-\y+\yb);
 \draw[>=latex,<->,dashed, line width=1.2mm,gray!25] (\r+\r+0.5*\xb,-\y+0.25*\la) -- (\r+\r+0.5*\xb,-\y+0.25*\la-\y+\yb);
 
   \draw[>=latex,->,line width=1.2mm,gray!25] (\r+0.5*\xb,-\y-\y+0.2*\di) -- (\r+0.5*\r,\yb-\y-\y-\y);
   \draw[>=latex,->, line width=1.2mm,gray!25] (\r+\r+0.5*\xb,-\y-\y+0.2*\di) -- (\r+0.5*\r+\xb,\yb-\y-\y-\y);
 


  \shade[bottom color = darkgreen!50, top color = darkgreen!25] (0,-2.4*\y+1*\y)--(\di,-2.4*\y+1*\y)--(\di,-2.4*\y-\di+1*\y)--(0,-2.4*\y-\di+1*\y)--cycle;
  \node[] at (0.35*\r,-2.4*\y-0.5*\di+1*\y) {\  infinite program};

  \shade[bottom color = violet!50, top color = violet!25] (0,-2.7*\y+1*\y)--(\di,-2.7*\y+1*\y)--(\di,-2.7*\y-\di+1*\y)--(0,-2.7*\y-\di+1*\y)--cycle;
  \node[] at (0.35*\r,-2.7*\y-0.5*\di+1*\y) {\ \ \ \ \ \ \ \ \ semi-infinite programs};

  \shade[bottom color = blue!50, top color = blue!25] (0,-3.0*\y+1*\y)--(\di,-3.0*\y+1*\y)--(\di,-3.0*\y-\di+1*\y)--(0,-3.0*\y-\di+1*\y)--cycle;
  \node[] at (0.35*\r,-3.0*\y-0.5*\di+1*\y) {finite programs};

 \node[] at (0.5*\r+0.5*\xb+\r,-0.95*\y+0.5*\yb+1.3*\buf) {Proposition~$\ref{prop:SD}$};
 \node[] at (0.5*\r+0.5*\xb+\r,-\y+0.5*\yb -1.3*\buf) {}; 
 \node[] at (0.5*\r+0.5*\xb+\r,-1.05*\y+0.5*\yb -1.3*\buf) {strong duality};   
 \node[] at (0.5*\xb+0.5*\r,0.5*\yb+1.3*\buf) {equivalent};
 \node[] at (0.19*\r+0.5*\xb+\r,-0.5*\y+0.5*\yb-0.0*\buf) {Theorem~$\ref{thm:inf-semi}$};
 \node[] at (0.19*\r+0.5*\xb+\r,-1.5*\y+0.5*\yb-0.0*\buf) {Theorem~$\ref{thm:semi-fin:rand}$};  
 \node[] at (1.19*\r+0.5*\xb+\r,-1.5*\y+0.5*\yb-0.0*\buf) {Theorem~$\ref{thm:semi-fin:smooth}$};     
  \node[] at (0.5*\r+0.5*\xb+\r,-2.6*\y+0.5*\yb -1.3*\buf) {Theorem~\ref{thm:inf-fin}};  
\end{tikzpicture}}
		\caption{Graphical representation of the article structure and its contributions}
		\label{fig:overview}
	\end{figure}
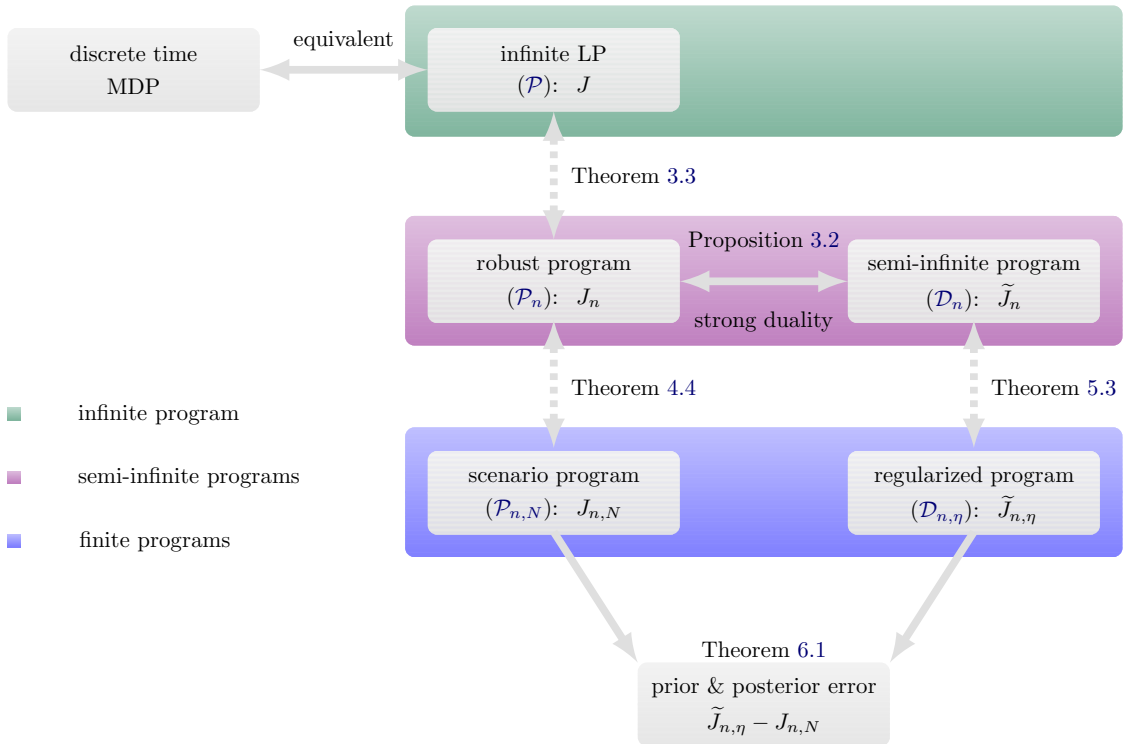
	
	\begin{enumerate}[label=$\bullet$, itemsep = 1mm, topsep = -1mm]
		\item \label{cont:1}
		We introduce a subclass of infinite LPs whose {\emph{regularized}} semi-infinite restriction enjoys analytical bounds for both primal and dual optimizers (Proposition~\ref{prop:SD}). The implications for MDP with average cost (Lemma~\ref{lem:MDP:bd-dual}) and with discounted cost (Lemma~\ref{lem:MDP:DC}) are also investigated. 
		
		\item \label{cont:2}
		We derive an explicit error bound between the original infinite LP and the regularized semi-infinite counterpart, providing insights on the impact of the underlying norm structure as well as on how the choice of basis functions contributes to the approximation error (Theorem~\ref{thm:inf-semi}, Corollary~\ref{cor:Hilbert}). In the MDP setting, we recover an existing result as a special case (Corollary~\ref{cor:MDP:inf-semi}).
		
		\item \label{cont:3}
		We adopt the recent developments from the randomized optimization literature to propose a finite convex program whose solution enjoys a priori probabilistic performance bounds (Theorem~\ref{thm:semi-fin:rand}). We extend the existing results to offer also an a posteriori bound under a generic underlying norm structure. The required conditions and theoretical assertions are validated in the MDP setting (Corollary~\ref{cor:adp:semi-finite}).
		
		\item \label{cont:4}
		In parallel to the randomized approach, we also utilize the recent developments in the structural convex optimization literature to propose an iterative algorithm for approximating the semi-infinite program. For this purpose, we extend the setting to incorporate unbounded prox-terms with a certain growth rate (Theorem~\ref{thm:semi-fin:smooth}). We illustrate how this extension allows us to deploy the entropy prox-term in the MDP setting (Lemma~\ref{lem:entropy:bound}, Corollary~\ref{cor:adp:smooth}).
	\end{enumerate}

	Section~\ref{sec:problem:statement} introduces the main motivation for the work, namely the control of discrete-time MDP and their LP characterization. Using standard results in the literature we embed these MDP in the more general framework of infinite LPs. Section~\ref{sec:inf-semi} studies the link from infinite LPs to semi-infinite programs. Section~\ref{sec:semi-fin:rand} presents the approximation of semi-infinite programs based on randomization, while Section~\ref{sec:semi-fin:smoothing} approaches the same objective using first-order convex optimization methods. Section~\ref{sec:inf-fin} summarizes the results in the preceding sections, establishing the approximation error from the original infinite LP to the finite convex counterparts. Section~\ref{sec:sim} illustrates the theoretical results through a truncated LQG example and a fisheries management problem. 
	

	\paragraph{\bf Notation} 
	
	The set $\R_+$ denotes the set of non-negative reals and $\|\cdot\|_{\ell_p}$ for $p \in [1,\infty]$ the standard $p$-norm in $\R^n$. Given a function $u:S\ra \R$, we denote the infinity norm of the function by $\|u\|_\infty \Let \sup_{s \in S}|u(s)|$, and the Lipschitz norm by $\|u\|_{\lip} \Let \sup_{s, s' \in S} \big\{ |u(s)|, {|u(s) - u(s')| \over \|s - s'\|_{\ell_\infty}}  \big\}$. The space of Lipschitz functions on a set $S$ is denoted by $\Lip(S)$; define the function $\ind(s) \equiv 1$ for all $s \in S$. We denote the Borel $\sigma$-algebra on the (topological) space $S$ by $\borel(S)$. Measurability is always understood in the sense of Borel. Products of topological spaces are assumed to be endowed with the product topology and the corresponding product $\sigma$-algebra. The space of finite signed measures (resp.\ probability measures) on $S$ is denoted by $\Meas(S)$ (resp.\ $\Prob(S)$). The Wasserstein norm on the space of signed measures $\Meas(S)$ is defined by $\|\mu\|_\wass\Let \sup_{ \|u\|_{\lip} \leq 1} \int_{S}u(s)\mu(\diff s)$ and can be shown to be the dual of the Lipschitz norm. The set of extreme points of a set $A$ is denoted by $\ext\{A\}$. Given a bilinear form $\inner{\cdot}{\cdot}$, the support function of $A$ is defined by $\supp{A}{y}=\sup_{x\in A}\inner{y}{x}$. The standard bilinear form in $\R^n$ (i.e., the inner product) is denoted by $y\bdot x$.

	\section{Motivation: Control of MDP and LP Characterization} \label{sec:problem:statement}
	
	\subsection{MDP setting} 
	\label{subsec:AC-setting}
	We briefly recall some standard definitions and refer interested readers to \cite{ref:Hernandez-96, ref:Hernandez-03, ref:Arapostathis-93} for further details. Consider a \emph{Markov control model}
	$\big( S,A,\{A(s) : s\in S\},Q,\cost \big),$
	where $S$ (resp.\ $A$) is a metric space called the \emph{state space} (resp.\ \emph{action space}) and for each $s \in S$ the measurable set $A(s) \subseteq A$ denotes the set of \textit{feasible actions} when the system is in state $s\in S$. The \emph{transition law} is a stochastic kernel $Q$ on $S$ given the feasible state-action pairs in $K \Let\{(s,a):s\in S, a\in A(s)\}$. A stochastic kernel acts on real valued measurable functions $u$ from the left as
	\begin{align*}
	Qu(s,a):= \int_{S}u(s') Q(\drv s'|s,a), \quad \forall (s,a)\in K,
	\end{align*}
	and on probability measures $\mu$ on $K$ from the right as
	\begin{align*}
	\mu Q(B) := \int_{K}Q(B|s,a)\mu\big(\drv(s,a)\big), \quad \forall B\in \borel(S).
	\end{align*} 
	Finally $\cost:K \to\R_+$ denotes a measurable function called the \emph{one-stage cost function}. The \emph{admissible history spaces} are defined recursively as $H_{0}\Let S$ and $H_{t}\Let H_{t-1}\times K$ for $t\in\N$ and the canonical sample space is defined as $\Omega\Let(S\times A)^{\infty}$. All random variables will be defined on the measurable space $(\Omega,\Sigalg)$ where $\Sigalg$ denotes the corresponding product $\sigma$-algebra. A generic element $\omega\in\Omega$ is of the form $\omega=(s_{0},a_{0},s_{1},a_{1},\hdots)$, where $s_{i}\in S$ are the states and $a_{i}\in A$ the action variables. An \textit{admissible policy} is a sequence $\pi=(\pi_{t})_{t\in\N_{0}}$ of stochastic kernels $\pi_{t}$ on $A$ given $h_t\in H_{t}$, satisfying the constraints $\pi_{t}(A(s_{t})|h_{t})=1$. The set of admissible policies will be denoted by $\Pi$. 
	Given a probability measure $\nu\in \Prob(S)$ and policy $\pi\in\Pi$, by the Ionescu Tulcea theorem \cite[p.~140-141]{ref:Bertsekas-78} there exists a unique probability measure $\mathbb{P}^{\pi}_{\nu}$ on $\left(\Omega,\Sigalg \right)$ such that for all measurable sets $B \subset S$, $C \subset A$, $h_{t}\in H_{t}$, and $t\in\N_{0}$
	\begin{align*}
	\Probpi{s_{0}\in B}			&= \nu(B)  \\
	\Probpi{a_{t}\in C|h_{t}}		&= \pi_{t}(C|h_{t})  \\
	\Probpi{s_{t+1}\in B|h_{t},a_{t}}	&= Q(B|s_{t},a_{t}). 
	\end{align*}
	The expectation operator with respect to $\mathbb{P}^{\pi}_{\nu}$ is denoted by $\mathbb{E}^{\pi}_{\nu}$. The stochastic process $\big( \Omega,\Sigalg,\mathbb{P}^{\pi}_{\nu},(s_{t})_{t\in\N_{0}} \big)$ is called a \emph{discrete-time MDP}.
	For most of the article we consider optimal control problems where the aim is to minimise a long term \emph{average cost} (AC) over the set of admissible policies and initial state measures. We definite the optimal value of the optimal control problem by
	\begin{align} \label{AC}
	\Jac \Let  \inf_{(\pi,\nu) \in \Pi \times \Prob(S)}\limsup_{T\to\infty} \frac{1}{T} \Expecpi{\sum_{t=0}^{T-1}\cost(s_{t},a_{t})}.
	\end{align}	
	We emphasize, however, that the results also apply to other performance objective, including the long-run \emph{discounted cost} problem as shown in Appendix~\ref{app:discouted:cost}.
	
	\subsection{Infinite LP characterization} \label{subsec:LP}
	The problem in \eqref{AC} admits an alternative LP characterization under some mild assumptions. 
	
	\begin{As}[Control model] \label{a:CM} We stipulate that
		\begin{enumerate}[label=(\roman*), itemsep = 1mm, topsep = -1mm]
			\item \label{a:CM:K} the set of feasible state-action pairs is the unit hypercube $K = [0,1]^{\dim(S\times A)}$;
			\item \label{a:CM:Q} the transition law $Q$ is Lipschitz continuous, i.e., there exists $L_Q>0$ such that for all $k, k'\in K$ and all continuous functions $u$  
			$$|Qu(k) - Qu(k')| \le L_Q \|u\|_\infty \|k - k'\|_{\ell_\infty};$$
			\item \label{a:CM:cost} the cost function $\cost$ is non-negative and Lipschitz continuous on $K$ with respect to the $\ell_\infty$-norm.
		\end{enumerate}
	\end{As}
	
	Assumption \ref{a:CM}\ref{a:CM:K} may seem restrictive, however, essentially it simply requires that the state-action set $K$ is compact. We refer the reader to Example~\ref{ex:fisheries} where a non-rectangular $K$ is transferred to a hypercube, and to \cite[Chapter~12.3]{ref:Hernandez-99} for further information about the LP characterization in more general settings.
	
	\begin{Thm}[LP characterization {\cite[Proposition~2.4]{ref:Dufour-15}}] 
		\label{thm:equivalent:LP}
		Under Assumption~\ref{a:CM}, 
		\begin{align}
		\label{AC-LP} 
		-\Jac = & \left\{ \begin{array}{ll}
		\inf\limits_{\rho, u} & -\rho   \\
		\st &\rho + u(s) - Qu(s,a) \leq \cost(s,a), \quad \forall (s,a)\in K \\
		& \rho\in\R, \quad u\in\Lip(S).
		\end{array} \right. 
		\end{align}
	\end{Thm}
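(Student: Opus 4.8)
The plan is to route the proof through the classical occupation-measure linear program and to establish two facts: that this measure LP has value $\Jac$, and that \eqref{AC-LP} is its gap-free linear-programming dual. Concretely, I would introduce the primal program over probability measures on the state-action set,
\begin{align*}
\inf_{\mu \in \Prob(K)} \Big\{ \textstyle\int_K \cost \, \diff\mu \ : \ \int_K \big(v(s) - Qv(s,a)\big)\,\mu(\diff(s,a)) = 0 \quad \forall v \in \Lip(S) \Big\},
\end{align*}
whose feasible set consists of those $\mu$ whose $S$-marginal is invariant under the closed-loop kernel (i.e. $\mu_S = \mu Q$). Weak duality between this program and \eqref{AC-LP} is immediate: for any LP-feasible pair $(\rho,u)$ and any measure-feasible $\mu$, integrating the constraint $\rho + u(s) - Qu(s,a) \le \cost(s,a)$ against $\mu$ and using $\int(u-Qu)\,\diff\mu = 0$ together with $\mu(K)=1$ yields $\rho \le \int_K \cost\,\diff\mu$. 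Taking suprema and infima gives $\sup\rho \le \inf_\mu\int_K\cost\,\diff\mu$, which after the sign flip $-\Jac=-\sup\rho$ is one of the two inequalities defining the claimed identity.

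For the identity between the measure LP and $\Jac$ I would argue both inequalities. For the ``$\le$'' direction, given any admissible $(\pi,\nu)$ I form the time-averaged occupation measures $\mu_T(\cdot) \Let \tfrac1T \sum_{t=0}^{T-1} \Probpi{(s_t,a_t)\in\cdot}$ on the compact set $K$; by Assumption~\ref{a:CM}\ref{a:CM:K} the space $\Prob(K)$ is weak-$*$ compact (Prokhorov), so a subsequence of $(\mu_T)$ converges to some $\mu^\star$, and the continuity of $Q$ and $\cost$ in Assumption~\ref{a:CM}\ref{a:CM:Q}--\ref{a:CM:cost} shows $\mu^\star$ is measure-feasible while $\int_K\cost\,\diff\mu^\star \le \limsup_{T\to\infty}\tfrac1T\Expecpi{\sum_{t=0}^{T-1}\cost(s_t,a_t)}$; minimizing over $(\pi,\nu)$ gives the measure LP value $\le\Jac$. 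For the ``$\ge$'' direction, a feasible $\mu$ disintegrates as $\mu(\diff(s,a))=\mu_S(\diff s)\,\pi(\diff a\mid s)$ into a stationary randomized Markov policy whose induced invariant occupation measure has long-run average cost $\int_K\cost\,\diff\mu$, exhibiting an admissible policy that attains it.

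The crux, and the step I expect to be the main obstacle, is strong duality together with showing that the dual optimizer $u$ may be taken in $\Lip(S)$ rather than merely bounded-measurable. For the no-gap part I would invoke an infinite-dimensional LP duality theorem in the spirit of Hern\'andez-Lerma--Lasserre: the moment cone generated by the continuous maps $v \mapsto \int_K(v-Qv)\,\diff\mu$ is weak-$*$ closed because $\Prob(K)$ is weak-$*$ compact and the constraint map is continuous, and a Sion minimax / separating-hyperplane argument then forces $\sup\rho = \inf_\mu\int_K\cost\,\diff\mu$. The Lipschitz regularity of the dual variable is precisely where Assumption~\ref{a:CM}\ref{a:CM:Q}--\ref{a:CM:cost} is essential: the Lipschitz continuity of $Q$ and $\cost$, combined with the compactness of $K$, forces the relevant value function (a solution of the average-cost optimality inequality $\rho + u(s) \le \inf_{a\in A(s)}\{\cost(s,a)+Qu(s,a)\}$) to inherit a finite Lipschitz modulus, so that restricting the feasible set of \eqref{AC-LP} to $\Lip(S)$ leaves its value unchanged. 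Combining the two weak-duality inequalities with this attainment yields $-\Jac=\inf_{\rho,u}(-\rho)$; since the statement coincides with \cite[Proposition~2.4]{ref:Dufour-15}, one may alternatively cite that reference and merely verify that Assumption~\ref{a:CM} implies its hypotheses.
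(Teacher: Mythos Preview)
The paper does not supply its own proof of this statement: Theorem~\ref{thm:equivalent:LP} is stated with a direct citation to \cite[Proposition~2.4]{ref:Dufour-15} and no further argument. Your last sentence---cite that reference and check that Assumption~\ref{a:CM} implies its hypotheses---is precisely the route the paper takes.

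Your sketch goes substantially further, outlining the standard occupation-measure argument (equivalence of $\Jac$ with the primal LP over invariant state-action measures via Prokhorov compactness of $\Prob(K)$ and disintegration into stationary randomized policies) together with infinite-dimensional LP strong duality in the Hern\'andez-Lerma--Lasserre style, and finally the Lipschitz regularity of the dual variable from Assumption~\ref{a:CM}\ref{a:CM:Q}--\ref{a:CM:cost}. This is essentially the content behind the cited proposition and is correct in outline; the one place where care is needed is the ``$\le$'' direction, where you should pick the subsequence $T_k$ realizing the $\limsup$ \emph{before} extracting a weak-$*$ convergent subsequence of $(\mu_{T_k})$, so that continuity of $\cost$ gives $\int_K\cost\,\diff\mu^\star=\limsup$ rather than merely an inequality. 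With that adjustment the sketch is sound, but for the purposes of this paper a citation suffices.
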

	
	The LP \eqref{AC-LP} can be expressed in the standard conic form $\inf_{x \in \X} \big\{\inner{x}{c} : \op x - b \in \cone \big\}$ by introducing
	\begin{align}
	\label{AC-setting}
	\left\{
	\begin{array}{l}
	\X = \R\times\Lip(S) \\
	x = (\rho, u) \in \X\\
	c = (c_{1},c_{2})=(-1,0) \in \R\times\Meas(S) \\
	b(s,a) = -\cost(s,a) \\ 
	\inner{x}{c} = c_{1}\rho +\int_S u(s) c_{2}(\diff s) \\
	\op x(s,a) = -\rho - u(s) + Qu(s,a) \\
	\cone = \Lip_+(K),  
	\end{array}\right.
	\end{align}
	where $\Meas(S)$ is the set of finite signed measures supported on $S$, and $\Lip_+(K)$ is the cone of Lipschitz functions taking non-negative values. It should be noted that the choice of the positive cone $\K = \Lip_+(K)$ is justified since, thanks to Assumption~\ref{a:CM}\ref{a:CM:Q}, the linear operator $\op$ maps the elements of $\X$ into $\Lip(K)$. 
	
	\begin{Rem}[Constrained MDP]
		The LP characterization of MDP naturally allows us to incorporate constraints in the form of 
		\begin{align*}
		\limsup_{T\to\infty} \frac{1}{T}\, \Expecpi{\sum_{t=0}^{T-1}d_i(s_{t},a_{t})} \le \ell_i, \qquad \forall i \in \{1,\cdots,I\},
		\end{align*}
		where the functions $d_i: K \ra \R$ and constants $\ell_i$ reflect our desired specifications. To this end, it suffices to introduce auxiliary decision variables $\beta_i \in \R_+$, and in \eqref{AC-LP} replace $\rho$ in the objective with $\rho - \sum_{i=1}^{I}\beta_{i} \ell_{i}$ and in the constraint with $ \rho - \sum_{i=1}^{I}\beta_{i}d_{i}$, see \cite[Theorem 5.2]{ref:Hernandez-03}. 
	\end{Rem}
	
	Our aim is to derive an approximation scheme for a class of such infinite dimensional LPs, including problems of the form \eqref{AC-LP}, that comes with an explicit bound on the approximation error.
	
	\section{Infinite to Semi-infinite Programs} 
	\label{sec:inf-semi}

	\subsection{Dual pairs of normed vector spaces} 
	\label{subsec:dual-pair}
	The triple $\big(\X,\C, \|\cdot\|\big)$ is called a \emph{dual pair} of normed vector spaces if
	\begin{itemize}
		\item $\X$ and $\C$ are vector spaces; 
		
		\item $\inner{\cdot}{\cdot}$ is a bilinear form on $\X\times \C$ that ``separates points", i.e., 
		\begin{itemize}
			\item for each nonzero $x \in \X$ there is some $c \in \C$ such that $\inner{x}{c} \neq 0$,
			\item for each nonzero $c \in \C$ there is some $x \in \X$ such that $\inner{x}{c} \neq 0$;
		\end{itemize}
		
		\item $\X$ is equipped with the norm $\|{\cdot}\|$, which together with the bilinear form induces a \emph{dual} norm in $\C$ defined through $\|{c}\|_* \Let \sup_{\|{x}\|\le 1}\inner{x}{c}$. 
	\end{itemize}
	The norm in the vector spaces is used as a means to quantify the performance of the approximation schemes. In particular, we emphasize that the vector spaces are not necessarily complete with respect to these norms.
	
	Let $\big(\B,\Y, \|\cdot\| \big)$ be another dual pair of normed vector spaces. As there is no danger of confusion, we use the same notation for the potentially different norm and bilinear form for each pair. Let $\op:\X \ra \B$ be a linear operator, and $\cone$ be a convex cone in $\B$. Given the fixed elements $c \in \C$ and $b \in \B$, we define a linear program, hereafter called the \emph{primal} program \ref{primal-inf}, as
	\begin{align} 
	\label{primal-inf} 
	\tag{$\Prim$}
	\Jp \Let \left\{ \begin{array}{ll}
	\Inf{x \in \X} & \inner{x}{c}   \\
	\st & \op x \geqc{\cone} b 
	\end{array} \right.
	\end{align}
	where the conic inequality $\op x \geqc{\cone} b$ is understood in the sense of $\op x - b \in \cone$. Throughout this study we assume that the program \ref{primal-inf} has an optimizer (i.e., the infimum is indeed a minimum), the cone $\cone$ is closed and the operator $\op$ is continuous where the corresponding topology is the weakest in which the topological duals of $\X$ and $\B$ are $\C$ and $\Y$, respectively. Let $\op^*:\Y \ra \C$ be the adjoint operator of $\op$ defined by
		\begin{align*}
		\inner{\op x}{y} = \inner{x}{\op^* y}, \qquad \forall x \in \X, \quad \forall y \in \Y. 
		\end{align*}
		Recall that if $\op$ is weakly continuous, then the adjoint operator $\op^*$ is well defined as its image is a subset of $\C$ \cite[Proposition 12.2.5]{ref:Hernandez-99}. The \emph{dual} program of \ref{primal-inf} is denoted by \ref{dual-inf} and is given by 
		\begin{align} 
		\label{dual-inf} 
		\tag{$\Dual$}
		\Jd \Let \left\{ \begin{array}{ll}
		\Sup{y\in \Y} & \inner{b}{y}   \\
		\st & \op^* y = c \\
		& y \in \cone^*, 
		\end{array} \right.
		\end{align}
		where $\cone^*$ is the dual cone of $\cone$ defined as $\cone^* \Let \big\{ y \in \Y : \inner{b}{y} \ge 0, \  \forall b \in \cone \big\}.$ It is not hard to see that \emph{weak duality} holds, as
		\begin{align*}
		\Jp = \Inf{x \in \X} \Sup{y \in \cone^*} \inner{x}{c} - \inner{\op x - b}{y} \ge \Sup{y \in \cone^*} \Inf{x \in \X} \inner{x}{c} - \inner{\op x - b}{y} = \Jd.
		\end{align*}
		An interesting question is when the above assertion holds as an equality. This is known as \emph{zero duality gap}, also referred to as \emph{strong duality} particularly when both \ref{primal-inf} and \ref{dual-inf} admit an optimizer \cite[p.\ 52]{ref:Anderson-87}. Our study is not directly concerned with conditions under which strong duality between \ref{primal-inf} and \ref{dual-inf} holds; see \cite[Section 3.6]{ref:Anderson-87} for a comprehensive discussion of such conditions. The programs \ref{primal-inf} and \ref{dual-inf} are assumed to be \emph{infinite}, in the sense that the dimensions of the decision spaces ($\X$ in \ref{primal-inf}, and $\Y$ in \ref{dual-inf}) as well as the number of constraints are both infinite. 
		
		\subsection{Semi-infinite approximation}
		\label{subsec:semi}
		
		Consider a family of linearly independent elements $\{x_n\}_{n \in \N} \subset \X$, and let $\X_n$ be the finite dimensional subspace generated by the first $n$ elements $\{x_i\}_{i\le n}$. Without loss of generality, we assume that $x_i$ are normalized, i.e., $\|x_i\| = 1$. Restricting the decision space $\X$ of \ref{primal-inf} to $\X_n$, along with an additional norm constraint, yields the program
		\begin{align} 
		\label{primal-n:old}
		\Jpn \Let \left\{ \begin{array}{ll}
		\Inf{\alpha \in \R^n} & \sum_{i = 1}^{n} \alpha_i \inner{x_i}{c}   \\
		\st & \sum_{i = 1}^{n} \alpha_i \op x_i \geqc{\cone} b \vspace{1mm}\\
		& \|\alpha\|_{\Rnorm} \le \xnb \vspace{1mm}
		\end{array} \right.
		\end{align}
		where $\|\cdot\|_{\Rnorm}$ is a given norm on $\R^n$ and $\xnb$ determines the size of the feasible set. In the spirit of dual-paired normed vector spaces, one can approximate $(\X,\C,\|\cdot\|)$ by the finite dimensional counterpart $(\R^n,\R^n,\|\cdot\|_\Rnorm)$ where the bilinear form is the standard inner product. In this view, the linear operator $\op:\X \ra \B$ may also be approximated by the linear operator $\opn:\R^n \ra \B$ with the respective adjoint $\opn^*:\Y \ra \R^n$ defined as
		\begin{align}
		\label{Ln}
		\opn \alpha \Let \sum_{i=1}^{n} \alpha_i \op x_i, 
		\qquad 
		\opn^*y \Let \big[\inner{\op x_1}{y}, \cdots, \inner{\op x_n}{y} \big].
		\end{align}
		It is straightforward to verify the definitions \eqref{Ln} by noting that $\inner{\opn\alpha}{y} = \alpha \bdot \opn^*y$ for all $\alpha\in\R^n$ and $y\in\Y$. 
		Defining the vector $\cnew \Let [\inner{x_1}{c}, \cdots, \inner{x_n}{c}]$, we can rewrite the program \eqref{primal-n:old} as
		\begin{align} 
		\label{primal-n} 
		\tag{$\Prim_n$}	
		\Jpn \Let \left\{ \begin{array}{ll}
		\Inf{\alpha \in \R^n} & \alpha \bdot \cnew   \\
		\st & \opn \alpha \geqc{\cone} b \vspace{1mm}\\
		& \|\alpha\|_{\Rnorm} \le \xnb. 
		\end{array} \right.
		\end{align}
		We call \ref{primal-n} a \emph{semi-infinite} program, as the decision variable is a finite dimensional vector $\alpha \in \R^n$, but the number of constraints is still in general infinite due to the conic inequality. The additional constraint on the norm of $\alpha$ in \ref{primal-n} acts as a \emph{regularizer} and is a key difference between the proposed approximation schemes and existing schemes in the literature. Methods for choosing the parameter $\xnb$ will be discussed later. 
		
		Dualizing the conic inequality constraint in \ref{primal-n} and using the dual norm definition leads to a dual counterpart
		\begin{align} 
		\label{dual-n} 
		\tag{$\Dual_n$}
		\Jdn \Let \left\{ \begin{array}{ll}
		\Sup{y \in \Y} & \inner{b}{y} - \xnb \|\opn^* y - \cnew\|_{\Rnorm^*}  \\
		\st& y \in \cone^*, 
		\end{array} \right.
		\end{align}
		where$\|\cdot\|_{\Rnorm^*}$ denotes the dual norm of $\|\cdot\|_\Rnorm$. Note that setting $\xnb = \infty$ effectively implies that the second term of the objective in \ref{dual-n} introduces $n$ hard constraints $\opn^* y = \cnew$ (cf. \eqref{Ln}). We study further the connection between \ref{primal-n} and \ref{dual-n} under the following regularity assumption: 
		
		\begin{As}[Semi-infinite regularity]
			\label{a:reg}
			We stipulate that 
			\begin{enumerate}[label=(\roman*), itemsep = 1mm, topsep = -1mm]
				\item \label{a:reg:feas} the program \ref{primal-n} is feasible; 
				\item \label{a:reg:inf-sup} there exists a positive constant $\gamma$ such that $\|\opn^* y\|_{\Rnorm^*} \geq \gamma \|y\|_*$ for every $y \in \cone^*$, and $\xnb$ is large enough so that $\gamma \xnb > \|b\|$.
				
			\end{enumerate}
		\end{As}
		
		Assumption \ref{a:reg}\ref{a:reg:inf-sup} is closely related to the condition
		\begin{align*}
		\inf_{y \in \cone^*}\sup_{x \in \X_n} {\inner{\op x}{y} \over \|x\| \|y\|_*} \geq \gamma, 
		\end{align*}
		that in the literature of numerical algorithms in infinite dimensional spaces, in particular the Galerkin discretization methods for partial differential equations, is often referred to as the ``\emph{inf-sup}" condition, see \cite{ref:Ern-04} for a comprehensive survey. To see this, note that for every $x \in \X_n$ the definitions in \eqref{Ln} imply that
		$$\inner{\op x}{y} = \inner{\opn \alpha}{y} = \alpha \bdot \opn^* y, \qquad x = \sum_{i=1}^{n} \alpha_i x_i.$$
		These conditions are in fact equivalent if the norm $\|\cdot\|_\Rnorm$ is induced by the original norm on $\X$, i.e., $\|\alpha\|_\Rnorm \Let \| \sum_{i=1}^{n} \alpha_i x_i\|$. We note that $\opn^*$ maps an infinite dimensional space to a finite dimensional one, and as such  Assumption~\ref{a:reg}\ref{a:reg:inf-sup} effectively necessitates that the null-space of $\opn^*$ intersects the positive cone $\cone^*$ only at $0$. In the following we show that this regularity condition leads to a zero duality gap between \ref{primal-n} and \ref{dual-n}, as well as an upper bound for the dual optimizers. The latter turns out to be a critical quantity for the performance bounds of this study. 
		
		\begin{Prop}[Duality gap \& bounded dual optimizers]
			\label{prop:SD}
			Under Assumption \ref{a:reg}\ref{a:reg:feas}, the duality gap between the programs \ref{primal-n} and \ref{dual-n} is zero, i.e., $\Jpn = \Jdn$. 
			If in addition Assumption \ref{a:reg}\ref{a:reg:inf-sup} holds, then for any optimizer $\yn$ of the program \ref{dual-n} and any lower bound $\Jlb \le \Jpn$ we have
			\begin{align}
			\label{yb}
			\|\yn\|_* \leq
			\ynb \Let {\xnb\|\cnew\|_{\Rnorm^*} - \Jlb \over \gamma \xnb - \|b\|} \le  {2\xnb\|\cnew\|_{\Rnorm^*} \over \gamma \xnb - \|b\|}.
			\end{align} 
		\end{Prop}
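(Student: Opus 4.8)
The plan is to read \ref{prop:SD} as a Lagrangian minimax statement and to exploit the fact that the norm regularizer confines the primal variable to a \emph{compact} set. I would introduce the Lagrangian $L(\alpha,y) \Let \alpha \bdot \cnew - \inner{\opn\alpha - b}{y}$ on the product $\Uball \times \cone^*$, where $\Uball \Let \{\alpha \in \R^n : \|\alpha\|_\Rnorm \le \xnb\}$. Since the definitions in \eqref{Ln} give $\inner{\opn\alpha}{y} = \alpha\bdot\opn^* y$, the map $L$ is affine, hence continuous, quasiconvex and quasiconcave, separately in $\alpha$ and in $y$. Crucially $\Uball$ is a compact convex subset of $\R^n$ and $\cone^*$ is convex, so Sion's minimax theorem applies and yields $\inf_{\alpha\in\Uball}\sup_{y\in\cone^*} L = \sup_{y\in\cone^*}\inf_{\alpha\in\Uball} L$.

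It then remains to identify the two sides with $\Jpn$ and $\Jdn$. For fixed $\alpha$, the inner supremum $\sup_{y\in\cone^*}\big[-\inner{\opn\alpha - b}{y}\big]$ equals $0$ when $\opn\alpha - b \in \cone$ and $+\infty$ otherwise; this is the bipolar/indicator identity for the weakly closed convex cone $\cone$, using $\cone^{**}=\cone$. Hence the left-hand side collapses to $\inf\{\alpha\bdot\cnew : \alpha\in\Uball,\ \opn\alpha\geqc{\cone} b\} = \Jpn$, which under Assumption~\ref{a:reg}\ref{a:reg:feas} is a minimum over a nonempty compact set. For the right-hand side, the inner infimum factorizes as $\inner{b}{y} + \inf_{\|\alpha\|_\Rnorm\le\xnb}\alpha\bdot(\cnew - \opn^* y) = \inner{b}{y} - \xnb\|\opn^* y - \cnew\|_{\Rnorm^*}$ by definition of the dual norm, so taking $\sup_{y\in\cone^*}$ reproduces exactly the objective of \ref{dual-n} and gives $\Jdn$. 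This establishes $\Jpn = \Jdn$.

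For the bound on $\yn$, I would combine the duality identity $\Jdn = \Jpn \ge \Jlb$ with the dual objective evaluated at an optimizer $\yn \in \cone^*$, namely $\Jlb \le \inner{b}{\yn} - \xnb\|\opn^*\yn - \cnew\|_{\Rnorm^*}$. The first term is controlled by $\inner{b}{\yn} \le \|b\|\,\|\yn\|_*$, and the subtracted term is bounded below via the reverse triangle inequality and Assumption~\ref{a:reg}\ref{a:reg:inf-sup}: $\|\opn^*\yn - \cnew\|_{\Rnorm^*} \ge \|\opn^*\yn\|_{\Rnorm^*} - \|\cnew\|_{\Rnorm^*} \ge \gamma\|\yn\|_* - \|\cnew\|_{\Rnorm^*}$. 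Substituting yields $\Jlb \le -(\gamma\xnb - \|b\|)\|\yn\|_* + \xnb\|\cnew\|_{\Rnorm^*}$; since $\gamma\xnb - \|b\| > 0$ by Assumption~\ref{a:reg}\ref{a:reg:inf-sup}, rearranging gives the first inequality in \eqref{yb}. The final inequality specializes to the always-valid lower bound $\Jlb = -\xnb\|\cnew\|_{\Rnorm^*}$, legitimate because $\Jpn \ge \inf_{\|\alpha\|_\Rnorm\le\xnb}\alpha\bdot\cnew = -\xnb\|\cnew\|_{\Rnorm^*}$; since $\ynb$ decreases as $\Jlb$ increases, any sharper lower bound (which then satisfies $\Jlb \ge -\xnb\|\cnew\|_{\Rnorm^*}$) only tightens the estimate, making the numerator at most $2\xnb\|\cnew\|_{\Rnorm^*}$.

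The manipulations in the third paragraph are routine norm estimates; the genuine obstacle is the interchange of infimum and supremum underlying the zero duality gap. This is precisely where the regularizer earns its keep: without the constraint $\|\alpha\|_\Rnorm \le \xnb$ the primal feasible set need not be compact and Sion's theorem would be unavailable. I would therefore make the verification of Sion's hypotheses airtight, i.e.\ compactness of $\Uball$, affinity (hence the required semicontinuity and quasiconvexity/quasiconcavity) of $L$ in each argument, and the bipolar identity $\cone^{**}=\cone$, which relies on $\cone$ being weakly closed in the topology fixed in Section~\ref{subsec:dual-pair}.
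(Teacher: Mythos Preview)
Your proof is correct and follows essentially the same route as the paper: both invoke Sion's minimax theorem, leveraging the compactness of the norm ball $\Uball$ in $\R^n$ to justify the interchange of $\inf$ and $\sup$, and both derive the dual-optimizer bound via the reverse triangle inequality combined with Assumption~\ref{a:reg}\ref{a:reg:inf-sup} and the trivial lower bound $\Jlb = -\xnb\|\cnew\|_{\Rnorm^*}$. Your write-up is somewhat more explicit about verifying Sion's hypotheses and the bipolar identity $\cone^{**}=\cone$, but the argument is substantively identical.
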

		
		\begin{proof}
			Since the elements $\{x_i\}_{i\le n}$ are linearly independent, the feasible set of the decision variable $\alpha$ in program \ref{primal-n} is a bounded closed subset of a finite dimensional space, and hence compact. Thus, thanks to the feasibility Assumption~\ref{a:reg}\ref{a:reg:feas} and compactness of the feasible set, the zero duality gap follows because
			\begin{align*}
			\Jpn = \inf_{\|\alpha\|_\Rnorm \le \xnb} \Big\{\alpha \bdot \cnew + \sup_{y \in \cone^*} \inner{b - \opn \alpha}{y} \Big\} &=  \sup_{y \in \cone^*}\inf_{\|\alpha\|_\Rnorm \le \xnb}\Big\{ \inner{b}{y} - \alpha \bdot (\opn^* y - \cnew)\Big\} = \Jdn,
			\end{align*}
			where the first equality holds by the definition of the dual cone $\cone^*$, and the second equality follows from Sion's minimax theorem \cite[Theorem 4.2]{ref:Sion-58}. Thanks to the zero duality gap above, we have 
			\begin{align*}
			\Jlb \le \Jpn = \Jdn & = \inner{b}{\yn} - \xnb\|\opn^* \yn - \cnew\|_{\Rnorm^*} \le \inner{b}{\yn}  - \xnb\|\opn^* \yn\|_{\Rnorm^*} + \xnb \|\cnew\|_{\Rnorm^*}.
			\end{align*}
			By Assumption \ref{a:reg}\ref{a:reg:inf-sup}, we then have
			\begin{align*}
			\Jpn & \le \|b\|\|{\yn}\|_* - \gamma \xnb\|\yn\|_* + \xnb\|\cnew\|_{\Rnorm^*} = \xnb\|\cnew\|_{\Rnorm^*} - \big(\gamma \xnb - \|b\|\big)\|\yn\|_*,
			\end{align*}
			which together with the simple lower bound $\Jlb \Let -\xnb\|\cnew\|_{\Rnorm^*} \le \Jpn$ concludes the proof. 
		\end{proof}
		
		Proposition~\ref{prop:SD} effectively implies that in the program \ref{dual-n} one can add a norm constraint $\|y\|_* \le \ynb$ without changing the optimal value. The parameter $\ynb$ depends on $\Jlb$, a lower bound for the optimal value of $\Jpn$. A simple choice for such a lower bound is $-\xnb\|\cnew\|_{\Rnorm^*}$, but in particular problem instances one may be able to obtain a less conservative bound. We validate the assertions of Proposition~\ref{prop:SD} for long-run average cost problems in the next section and for long-run discounted cost problems in Appendix~\ref{app:discouted:cost}.
		
		Program~\ref{primal-n} is a restricted version of the original program \ref{primal-inf} (also called an \emph{inner approximation} \cite[Definition 12.2.13]{ref:Hernandez-99}), and thus $\Jp \le \Jpn$. 
		However, under Assumption \ref{a:reg}, we show that the gap $\Jpn-\Jp$ can be quantified explicitly.
		To this end, we consider the projection mapping $\proj_{\set A}(x) \Let \arg\min_{x' \in \set A} \| x' - x\|$, the operator norm $\|{\op}\| \Let \sup_{\|x\| \le 1} \|{\op x}\|$, and define the set 
		\begin{align}
		\label{uball}
		\uball \Let \Big\{\sum_{i=1}^{n} \alpha_i x_i \in \X_n ~:~ \|\alpha\|_\Rnorm \le \xnb\Big\}.
		\end{align}
		
		\begin{Thm}[Semi-infinite approximation]
			\label{thm:inf-semi}
			Let $x\opt$ and $\yn$ be optimizers for the programs \ref{primal-inf} and \ref{dual-n}, respectively, and let $r_n \Let x\opt - \proj_{\uball}(x\opt)$ be the projection residual of the optimizer $x\opt$ onto  the set $\uball$ as defined in \eqref{uball}. Under Assumption~\ref{a:reg}\ref{a:reg:feas}, we have $0\leq \Jpn - \Jp \le \inner{r_n}{\op^*\yn - c}$ where $\Jpn$ and $J$ are the optimal value of the programs \ref{primal-n} and \ref{primal-inf}. In addition, if Assumption~\ref{a:reg}\ref{a:reg:inf-sup} holds, then 
			\begin{align}
			\label{inf-semi error}
			0 \le \Jpn - \Jp \le \big(\|c\|_*+\ynb\|\op\| \big)\|r_n\|,
			\end{align}
			where $\ynb$ is the dual optimizer bound introduced in \eqref{yb}.
		\end{Thm}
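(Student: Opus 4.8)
The plan is to exploit the
duality characterization of Proposition~\ref{prop:SD} together with
the variational structure of the projection residual $r_n$. I want to
bound the suboptimality gap $\Jpn - \Jp$ by comparing the primal
optimizer $x\opt$ of \ref{primal-inf} against its projection
$\proj_{\uball}(x\opt)$, which is feasible for \ref{primal-n} (up to
verifying the conic constraint). The key observation is that the gap
should be expressible as a pairing of the residual $r_n$ with a dual
quantity, since the residual measures exactly how much $x\opt$ fails to
lie in the restricted feasible set $\uball$.

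I'm now writing:

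\begin{proof}
First assume only Assumption~\ref{a:reg}\ref{a:reg:feas}. The left
inequality $0 \le \Jpn - \Jp$ is the inner-approximation bound already
noted before the theorem. For the upper bound, write
$\proj_{\uball}(x\opt) = x\opt - r_n$. Since $x\opt$ is feasible for
\ref{primal-inf}, we have $\op x\opt \geqc{\cone} b$, i.e.\
$\op x\opt - b \in \cone$. Let $\yn$ be an optimizer of \ref{dual-n};
by Proposition~\ref{prop:SD} we have $\yn \in \cone^*$ and, invoking the
zero duality gap $\Jpn = \Jdn$, the dual objective evaluated at $\yn$
equals $\Jpn$. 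The plan is to estimate
\begin{align*}
\Jpn - \Jp = \Jdn - \inner{x\opt}{c}
= \inner{b}{\yn} - \xnb\|\opn^*\yn - \cnew\|_{\Rnorm^*} - \inner{x\opt}{c}.
\end{align*}
The main step is to rewrite $\inner{x\opt}{c}$ in terms of the
projection $\proj_{\uball}(x\opt)$ and the residual $r_n$, and to use
the conic feasibility $\inner{\op x\opt - b}{\yn} \ge 0$ (which holds
because $\op x\opt - b \in \cone$ and $\yn \in \cone^*$) to absorb the
$\inner{b}{\yn}$ term against $\inner{\op x\opt}{\yn}$. Manipulating via
the adjoint identity $\inner{\op x\opt}{\yn} = \inner{x\opt}{\op^*\yn}$
should collapse the estimate into the clean pairing
$\Jpn - \Jp \le \inner{r_n}{\op^*\yn - c}$.
\end{proof}

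**Key steps and the main obstacle.** The heart of the argument is the
algebraic rearrangement: I would add and subtract
$\inner{\proj_{\uball}(x\opt)}{c}$ and the term
$\xnb\|\opn^*\yn-\cnew\|_{\Rnorm^*}$ so that the coordinates of
$\proj_{\uball}(x\opt)$ in the basis $\{x_i\}_{i\le n}$ play the role of
the primal variable $\alpha$, whose norm is at most $\xnb$. Because the
dual objective in \ref{dual-n} is the Lagrangian dual of \ref{primal-n}
with the norm constraint $\|\alpha\|_{\Rnorm}\le\xnb$, evaluating the
inner Lagrangian at the feasible point
$\alpha = \proj_{\uball}(x\opt)$ (expressed in the basis) and using
$\yn\in\cone^*$ gives an inequality linking $\Jpn$ to
$\inner{\proj_{\uball}(x\opt)}{c}$ plus a term controlled by
$\inner{\op\proj_{\uball}(x\opt)-b}{\yn}$. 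Subtracting
$\Jp=\inner{x\opt}{c}$ then isolates $\inner{r_n}{c}$ and
$\inner{\op r_n}{\yn} = \inner{r_n}{\op^*\yn}$, whose difference is
precisely $\inner{r_n}{\op^*\yn - c}$.

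For the refined bound \eqref{inf-semi error}, once
$\Jpn - \Jp \le \inner{r_n}{\op^*\yn - c}$ is in hand, I would apply
Cauchy--Schwarz in the dual pairing,
$\inner{r_n}{\op^*\yn - c} \le \|r_n\|\,\|\op^*\yn - c\|_*$, and then use
the triangle inequality together with the operator-norm bound
$\|\op^*\yn\|_* = \|\op\|\,\|\yn\|_*$ (more precisely
$\|\op^*\yn\|_* \le \|\op\|\,\|\yn\|_*$, since $\op$ and $\op^*$ share
their operator norm) and the dual-optimizer bound
$\|\yn\|_* \le \ynb$ from \eqref{yb}. This yields
$\|\op^*\yn - c\|_* \le \|c\|_* + \ynb\|\op\|$ and hence the claimed
estimate.

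I expect the \textbf{main obstacle} to be the careful bookkeeping in the
first rearrangement: one must ensure that $\proj_{\uball}(x\opt)$,
although defined as a minimizer over $\uball\subset\X_n$, is genuinely
admissible in the Lagrangian reformulation of \ref{primal-n}, and that
the sign of the term $\inner{\op\proj_{\uball}(x\opt)-b}{\yn}$ works in
our favor. The subtlety is that the projection $\proj_{\uball}(x\opt)$
need \emph{not} satisfy the conic constraint
$\op\proj_{\uball}(x\opt)\geqc{\cone} b$, so I cannot simply plug it into
the primal feasible set; instead I must work at the level of the dual
objective in \ref{dual-n}, where no primal feasibility is required, and
let the pairing $\inner{\op x\opt - b}{\yn}\ge 0$ (valid for the
\emph{true} optimizer $x\opt$, not its projection) supply the needed
inequality. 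Keeping the correct optimizer in each pairing — $x\opt$ for
conic feasibility, $\proj_{\uball}(x\opt)$ for the norm constraint — is
where the argument is most delicate.
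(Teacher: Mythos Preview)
Your proposal is correct and follows essentially the same route as the paper's proof: add the nonnegative term $\inner{\op x\opt - b}{\yn}\ge 0$, use the adjoint to rewrite $\inner{\op x\opt}{\yn}=\inner{x\opt}{\op^*\yn}$, split $x\opt = r_n + \proj_{\uball}(x\opt)$, bound $\inner{\proj_{\uball}(x\opt)}{\op^*\yn - c} = \wt\alpha\bdot(\opn^*\yn-\cnew)\le \xnb\|\opn^*\yn-\cnew\|_{\Rnorm^*}$, and cancel this against the dual objective term using $\Jpn=\Jdn$. For \eqref{inf-semi error} the paper splits the pairing as $\inner{r_n}{-c}+\inner{\op r_n}{\yn}$ rather than going through $\|\op^*\yn - c\|_*$, but both routes yield the same bound and your identification of the delicate point (conic feasibility is used for $x\opt$, the norm bound for its projection) is exactly right.
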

		
		\begin{proof}
			The lower bound $0\le \Jpn - \Jp$ is trivial, and we only need to prove the upper bound. Note that since the optimizer $x\opt \in \X$ is a feasible solution of \ref{primal-inf}, then $\op x\opt - b \in \cone$. By the definition of the dual cone $\cone^*$, this implies that $\inner{\op x\opt - b}{y} \ge 0$ for all $y \in \cone^*$. Since the dual optimizer $\yn$ belongs to the dual cone $\cone^*$, then 
			\begin{align*}
			\Jpn - \Jp &\le \Jpn - \Jp + \inner{\op x\opt - b}{\yn} = \Jpn - \inner{x\opt}{c} + \inner{\op x\opt}{\yn} - \inner{b}{\yn} \\
			& = \Jpn + \inner{x\opt}{\op^*\yn - c} - \inner{b}{\yn} \\
			& = \Jpn +  \inner{r_{n}}{\op^*\yn - c} + \inner{\proj_{\uball}(x\opt)}{\op^*\yn - c} - \inner{b}{\yn},\\
			& = \Jpn +  \inner{r_{n}}{\op^*\yn - c} + \wt{\alpha}\bdot\big({\opn^*\yn - \cnew}\big) - \inner{b}{\yn},
			\end{align*}
			for some $\wt\alpha \in \R^n$ with norm $\|\wt\alpha\|_\Rnorm \le \xnb$; for the last line, see the definition of the operator $\opn$ in \eqref{Ln} as well as the vector $\cnew$ in the program \ref{primal-n}. Using the definition of the dual norm and the operators \eqref{Ln}, one can deduce from above that
			\begin{align*}
			\Jpn - \Jp & \le J_n + \inner{r_{n}}{\op^* \yn - c} + \xnb \|\opn^* \yn - \cnew\|_{\Rnorm^*} - \inner{b}{\yn} = J_n + \inner{r_{n}}{\op^* \yn - c} - \Jdn,
			\end{align*}
			which in conjunction with the zero duality gap ($\Jpn = \Jdn$) establishes the first assertion of the proposition. The second assertion is simply the consequence of the first part and the norm definitions, i.e.,
			\begin{align*}
			\inner{r_n}{\op^* \yn- c} = \inner{r_n}{-c} + \inner{\op r_n}{\yn} \le  \|r_n\|\|{c}\|_* + \|\op r_n\|\| \yn\|_{*} \le \|r_n\| \Big( \|c\|_{*} + \|\op\| \|\yn\|_* \Big).
			\end{align*}
			Invoking the bound on the dual optimizer $\yn$ from Proposition \ref{prop:SD} completes the proof. 
		\end{proof}		
		
		\begin{Rem}[Impact of norms on semi-infinite approximation]\label{rem:norm-semi} 
			We note the following concerning the impact of the choice of norms on the approximation error:
			\begin{enumerate} [label=(\roman*), itemsep = 1mm, topsep = -1mm]
				\item \label{rem:norm-semi:theta}
				The only norm that influences the semi-infinite program \ref{primal-n} is $\|\cdot\|_\Rnorm$ on $\R^n$. When it comes to the approximation error \eqref{inf-semi error}, the norm $\|\cdot\|_\Rnorm$ may have an impact on the residual $r_n$ only if the set $\uball$ in \eqref{uball} does not contain $\proj_{\X_n}(x\opt)$, the projection $x\opt$ on the subspace $\X_n$, where $x\opt$ is an optimizer of the infinite  program~\ref{primal-inf}. 	
				
				\item \label{rem:norm-semi:dual-pair}
				The norms of the dual pairs of vector spaces only appear in Theorem~\ref{thm:inf-semi} to quantify the approximation error. Note that in \eqref{inf-semi error} the stronger the norm on $\X$, the higher $\|r_n\|$, and the lower $\|c\|_*$ and $\|\op\|$. On the other hand, the stronger the norm on $\B$, the higher $\|b\|$ and $\|\op\|$ and the lower $\gamma$ (cf. Assumption~\ref{a:reg}\ref{a:reg:inf-sup}). 
			\end{enumerate}
		\end{Rem}				
		
		The error bound \eqref{inf-semi error} can be further improved when $\X$ is a Hilbert space. In this case, let $\comp{\X}_n$ denote the orthogonal complement of $\X_n$. We define the \emph{restricted} norms by
		\begin{align}
		\label{norm-res}
		\|c\|_{*n} \Let \sup_{x \in \comp{\X}_n} {\inner{x}{c} \over {\|{x}\|}}, \qquad \|\op\|_{n} \Let \sup_{x \in \comp{\X}_{n}} \frac{\|{\op x}\|}{\|{x}\|}. 
		\end{align}
		It is straightforward to see that by definition $\|c\|_{*n} \le \|c\|_{*}$ and $\|\op\|_{n} \le \|\op\|$.
		
		\begin{Cor}[Hilbert structure]\label{cor:Hilbert}
			Suppose that $\X$ is a Hilbert space and $\|\cdot\|$ is the norm induced by the corresponding inner product. Let $\{x_i\}_{i\in\N}$ be an orthonormal dense family and $\|\cdot\|_\Rnorm = \|\cdot\|_{\ell_2}$. Let $x\opt$ be an optimal solution for \ref{primal-inf} and chose $\xnb \ge \|x\opt\|$. Under the assymptions of Theorem~\ref{thm:inf-semi}, we have 
			\begin{align*}
			0 \le \Jpn - \Jp \le \big(\|c\|_{n}+\ynb\|\op\|_n \big)\big\|\proj_{\comp{\X}_{n}}(x\opt) \big\|.
			\end{align*}
		\end{Cor}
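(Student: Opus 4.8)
The plan is to specialize the estimate of Theorem~\ref{thm:inf-semi} to the Hilbert setting by exploiting two features that become available here: the projection residual $r_n$ becomes a genuine orthogonal projection, and it lands inside the orthogonal complement $\comp{\X}_n$, which is precisely what lets us replace the global norms $\|c\|_*$ and $\|\op\|$ by their restricted counterparts defined in \eqref{norm-res}.

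First I would pin down the geometry of the set $\uball$. Since $\{x_i\}_{i\le n}$ is orthonormal and $\|\cdot\|_\Rnorm = \|\cdot\|_{\ell_2}$, Parseval's identity gives $\|\sum_{i\le n}\alpha_i x_i\| = \|\alpha\|_{\ell_2}$, so $\uball$ is exactly the closed ball of radius $\xnb$ inside the subspace $\X_n$. The orthogonal projection $\proj_{\X_n}(x\opt) = \sum_{i\le n}\inner{x\opt}{x_i}\,x_i$ then satisfies $\|\proj_{\X_n}(x\opt)\| \le \|x\opt\| \le \xnb$, where the first inequality is the nonexpansiveness of orthogonal projections (Pythagoras) and the last is the hypothesis $\xnb \ge \|x\opt\|$.

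The crux is to show $\proj_{\uball}(x\opt) = \proj_{\X_n}(x\opt)$. Because $\uball \subseteq \X_n$, any minimizer of $\min_{x'\in\uball}\|x' - x\opt\|$ must lie in $\X_n$; the unconstrained minimizer over the whole subspace $\X_n$ is $\proj_{\X_n}(x\opt)$, and by the previous step it already belongs to $\uball$, hence it is also the constrained minimizer. Consequently the projection residual appearing in Theorem~\ref{thm:inf-semi} is $r_n = x\opt - \proj_{\X_n}(x\opt) = \proj_{\comp{\X}_n}(x\opt) \in \comp{\X}_n$. This is the single place where the Hilbert structure and the choice $\xnb \ge \|x\opt\|$ are essential, and it is the step I expect to demand the most care: one must verify that the norm ball does not ``cut off'' the unconstrained subspace projection, which is exactly where $\xnb \ge \|x\opt\|$ enters.

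Finally I would re-run the closing estimate of Theorem~\ref{thm:inf-semi}, namely $\inner{r_n}{\op^*\yn - c} = -\inner{r_n}{c} + \inner{\op r_n}{\yn}$, but sharpen each term using $r_n \in \comp{\X}_n$. As $\comp{\X}_n$ is a subspace, both $r_n$ and $-r_n$ lie in it, so by the definitions in \eqref{norm-res} we obtain $-\inner{r_n}{c} \le \|c\|_{*n}\|r_n\|$ and $\|\op r_n\| \le \|\op\|_n\|r_n\|$. Combining these with the dual-optimizer bound $\|\yn\|_* \le \ynb$ from Proposition~\ref{prop:SD} yields $\inner{r_n}{\op^*\yn - c} \le (\|c\|_{*n} + \ynb\|\op\|_n)\|r_n\|$, and substituting $\|r_n\| = \|\proj_{\comp{\X}_n}(x\opt)\|$ into the first assertion of Theorem~\ref{thm:inf-semi} gives the claim. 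Note that density of $\{x_i\}_{i\in\N}$ is not needed for this finite-$n$ inequality; it serves only to guarantee $\|\proj_{\comp{\X}_n}(x\opt)\| \to 0$, so that the bound vanishes asymptotically.
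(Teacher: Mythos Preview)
Your proposal is correct and follows essentially the same route as the paper: identify $\uball$ with the $\xnb$-ball in $\X_n$ via orthonormality, use $\xnb \ge \|x\opt\|$ to conclude $\proj_{\uball}(x\opt)=\proj_{\X_n}(x\opt)$ so that $r_n\in\comp{\X}_n$, and then rerun the final estimate of Theorem~\ref{thm:inf-semi} with the restricted norms \eqref{norm-res}. Your argument is in fact slightly more explicit than the paper's (which simply asserts the key projection identity), and your closing remark that density of $\{x_i\}_{i\in\N}$ is not needed for the finite-$n$ bound is a correct and useful observation.
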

		
		\begin{proof}
			We first note that the $\ell_2$-norm on $\R^n$ is indeed the norm induced by $\|\cdot\|$, since due to the orthonormality of $\{x_i\}_{i\in\N}$ we have
			$$\|\alpha\|_\Rnorm \Let \Big\|\sum_{i=1}^{n} \alpha_i x_i \Big\| = \sqrt{\sum_{i=1}^{n} \alpha_i^2\|x_i\|^2} = \|\alpha\|_{\ell_2}.$$ 
			If $\xnb \ge \|x\opt\|$, then $\proj_{\uball}(x\opt) = \proj_{\X_n}(x\opt)$, i.e., the projection of the optimizer $x\opt$ on the ball $\uball$ is in fact the projection onto the subspace $\X_{n}$. Therefore, thanks to the orthonormality, the projection residual $r_n = x\opt - \proj_{\X_{n}}(x\opt)$ belongs to the orthogonal complement $\comp{\X}_{n}$. Thus, following the same reasoning as in the proof of Theorem~\ref{thm:inf-semi}, one arrives at a bound similar to \eqref{inf-semi error} but using the restricted norms \eqref{norm-res}; recall that the norm in a Hilbert space is self-dual. 
		\end{proof}
		
		\subsection{Semi-infinite results in the MDP setting}
		\label{subsec:semi:MDP}
		We now return to the MDP setting in Section~\ref{sec:problem:statement}, and in particular the AC problem \eqref{AC-LP}, to investigate the application of the proposed approximation scheme. Recall that the AC problem \eqref{AC} can be recast in an LP framework in the form of \ref{primal-inf}, see \eqref{AC-setting}. To complete this transition to the dual pairs, we introduce the spaces 
		\begin{align} 
		\label{AC:pairs}
		\left\{\begin{array}{ll}
		\X = \R\times \Lip(S), & \C=\R\times\Meas(S), \\
		\B=\Lip(K), & \Y = \Meas(K), \\
		\cone=\Lip_+(K), & \cone^{*}=\Meas_+(K).
		\end{array}\right.
		\end{align}
		The bilinear form between each pair $(\X, \C)$ and $(\B,\Y)$ is defined in an obvious way (cf.\ \eqref{AC-setting}). The linear operator $\op:\X \ra \B$ is defined as $\op(\rho,u)(s,a) \Let -\rho - u(s) + Qu(s,a)$, and it can be shown to be weakly continuous \cite[p.~220]{ref:Hernandez-99}. On the pair $(\X, \C)$ we consider the norms
		\begin{subequations}
			\label{norm}
			\begin{align}
			\label{norm-1}
			\left\{
			\begin{array}{l}
			\|x\| = \|(\rho, u)\| = \max\big\{|\rho|, \|u\|_\lip \} = \max\big\{|\rho|, \|u\|_\infty, \sup_{s,s' \in S}{u(s) - u(s') \over \|s - s'\|_{\ell_\infty}}   \big\}, \vspace{2mm}\\
			\|c\|_* \Let \sup_{\|x\| \le 1} \inner{x}{c} =|c_1| + \sup_{\|u\|_\lip \le 1} \int_S u(s) c_2(\diff s) = |c_1| + \|c_2\|_{\wass}. 
			\end{array} \right. 
			\end{align}
			Recall that $\|\cdot\|_\lip$ is the Lipschitz norm on $\Lip(S)$ whose dual norm $\|\cdot\|_\wass$ in $\Meas(S)$ is known as the Wasserstein norm \cite[p.~105]{ref:Villani}. The adjoint operator $\op^{*}:\Y\to\C$ is given by $\op^{*}y(\cdot) \Let \big(-\inner{\ind}{y},-y(\cdot\times A)+y Q(\cdot)\big)$, where $\ind$ is the constant function in $\Lip(S)$ with value 1. In the second pair $(\B,\Y)$, we consider the norms
			\begin{align}
			\label{norm-2}
			\left\{
			\begin{array}{l}
			\|b\| = \|b\|_\lip \Let \max\big\{\|b\|_\infty, \sup_{k,k' \in K}{b(k) - b(k') \over \|k - k'\|_{\ell_\infty}}   \big\}, \vspace{2mm} \\
			\|y\|_* \Let \sup_{\|b\|_\lip \le 1} \inner{b}{y} = \|y\|_{\wass}. 
			\end{array} \right. 
			\end{align}
		\end{subequations}
		A commonly used norm on the set of measures is the total variation whose dual (variational) characterization is associated with $\|\cdot\|_\infty$ in the space of continuous functions \cite[p.~2]{ref:Hernandez-99}. We note that in the positive cone $\cone^* = \Meas_+(K)$ the total variation and Wasserstein norms indeed coincide. 
		
		Following the construction in \ref{primal-n}, we consider a collection of $n$-linearly independent, normalized functions $\{u_i\}_{i \le n}$, $\|u_i\|_\lip = 1$, and define the semi-infinite approximation of the AC problem \eqref{AC-LP} by
		\begin{align}
		\label{AC-LP-n} 
		-\Jacn = & \left\{ \begin{array}{ll}
		\inf\limits_{(\rho, \alpha)\in\R\times\R^n} & -\rho   \\
		\st &\rho + \sum\limits_{i=1}^{n} \alpha_i\big(u_i(s) - Qu_i(s,a)\big) \leq \cost(s,a), \quad \forall (s,a)\in K \\
		&  \| \alpha \|_\Rnorm \le \xnb 
		\end{array} \right. 
		\end{align}
		Comparing with the program \ref{primal-n}, we note that the finite dimensional subspace $\X_n \subset \R \times \Lip(S)$ is the subspace spanned by the basis elements $x_0 = (1,0)$ and $x_i = (0,u_i)$ for all $i \in \{1,\cdots,n\}$, i.e., the subspace $\X_n$ is in fact $n+1$ dimensional.  Moreover, the norm constraint in \eqref{AC-LP-n} is only imposed on the second coordinate of the decision variables $(\rho,\alpha)$ (i.e., $\|\alpha\|_\Rnorm \le \xnb$). The following lemmas address the operator norm and the respective regularity requirements of Assumption~\ref{a:reg} for the program \eqref{AC-LP-n}.

\begin{Lem}[MDP operator norm] \label{lem:operator}
	In the AC problem \eqref{AC-LP} under Assumption~\ref{a:CM}\ref{a:CM:Q} with the specific norms defined in \eqref{norm}, the linear operator norm satisfies $\|I - Q\| \Let \sup_{\|u\|_\lip \le 1} \|u - Qu\|_\lip \le 1 + \max\{L_Q,1\}$.
\end{Lem}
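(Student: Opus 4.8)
The plan is to fix an arbitrary $u$ with $\|u\|_\lip \le 1$ and bound the two ingredients of $\|u - Qu\|_\lip$ separately --- its supremum norm and its Lipschitz constant --- and then recombine them through the maximum that defines the Lipschitz norm on $\Lip(K)$. Throughout I would regard $u$ as a function on $K$ via $u(s,a) \Let u(s)$, and record at the outset that the normalization $\|u\|_\lip \le 1$ supplies both $\|u\|_\infty \le 1$ and a Lipschitz constant of $u$ (on the state coordinate) of at most $1$.

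For the supremum norm, the key is that $Q$ is a stochastic kernel, so $\int_S Q(\diff s'|s,a) = 1$ and hence $|Qu(s,a)| \le \|u\|_\infty \le 1$ for every $(s,a)\in K$. A triangle inequality then gives $|u(s) - Qu(s,a)| \le |u(s)| + |Qu(s,a)| \le 2$, so $\|u - Qu\|_\infty \le 2$.

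For the Lipschitz constant, I would split the increment across two points $k = (s,a)$ and $k' = (s',a')$ in $K$ as
\[
\big(u(s) - Qu(s,a)\big) - \big(u(s') - Qu(s',a')\big) = \big(u(s) - u(s')\big) - \big(Qu(s,a) - Qu(s',a')\big).
\]
The first term is controlled by the Lipschitz constant of $u$ together with the elementary fact that projecting a pair onto its state component is $1$-Lipschitz in the $\ell_\infty$ metric, i.e.\ $\|s - s'\|_{\ell_\infty} \le \|k - k'\|_{\ell_\infty}$, which yields $|u(s) - u(s')| \le \|k - k'\|_{\ell_\infty}$. The second term is precisely the quantity bounded by Assumption~\ref{a:CM}\ref{a:CM:Q}, which in combination with $\|u\|_\infty \le 1$ gives $|Qu(s,a) - Qu(s',a')| \le L_Q \|k - k'\|_{\ell_\infty}$. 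Adding the two estimates shows the Lipschitz constant of $u - Qu$ is at most $1 + L_Q$.

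Combining the two parts through the definition of the Lipschitz norm gives $\|u - Qu\|_\lip \le \max\{2,\, 1 + L_Q\} = 1 + \max\{L_Q, 1\}$, and taking the supremum over $\|u\|_\lip \le 1$ establishes the claim. The only point requiring care --- and the reason the stated bound features $\max\{L_Q, 1\}$ rather than simply $L_Q$ --- is that the crude supremum-norm estimate imposes a floor of $2 = 1 + 1$, so the supremum-norm and Lipschitz contributions must be reconciled via the maximum; the remaining steps are direct consequences of the stochastic-kernel property and the Lipschitz hypothesis.
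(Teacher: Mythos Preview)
Your proof is correct and follows essentially the same approach as the paper: both rely on the stochastic-kernel property for the sup-norm control, Assumption~\ref{a:CM}\ref{a:CM:Q} for the Lipschitz control, and a triangle inequality. The only cosmetic difference is the order of operations --- the paper first applies the triangle inequality at the operator level, $\|I-Q\| \le 1 + \|Q\|$, and then bounds $\|Qu\|_\lip/\|u\|_\infty \le \max\{L_Q,1\}$, whereas you fix $u$ and bound the sup-norm and Lipschitz-constant components of $\|u-Qu\|_\lip$ separately before taking the maximum; the ingredients and final estimate are identical.
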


\begin{proof}
	Using the triangle inequality it is straightforward to see that
	\begin{align*}
	\| I-Q \| 	&= 		\sup\limits_{u\in\Lip(S)} \frac{\| u - Qu \|_\lip}{\|u\|_\lip } \le 1 + \sup\limits_{u\in \Lip(S)} {\| Qu \|_\lip \over \| u \|_\lip} \le 1 + \sup\limits_{u\in \Lip(S)} {\| Qu \|_\lip \over \| u \|_\infty}\\ 
	& \le 1 + \max\Big\{L_Q, \sup\limits_{u\in \Lip(S)} {\| Qu \|_\infty \over \| u \|_\infty} \Big\} \le 1 + \max\{L_Q,1\},
	\end{align*}
	where the second line is an immediate consequence of Assumption~\ref{a:CM}\ref{a:CM:Q} and the fact that the operator $Q$ is a stochastic kernel. Hence, $|Qu(s,a)|=| \int_{S} u(y) Q(\drv y|s,a)| \leq \| u \|_\infty (\int_{S} Q(\drv y|s,a)) = \| u \|_\infty$.
\end{proof}

\begin{Lem}[MDP semi-infinite regularity]\label{lem:MDP:bd-dual}
	Consider the AC program \eqref{AC-LP} under Assumption \ref{a:CM}. Then, Assumption~\ref{a:reg} holds for the semi-infinite counterpart in \eqref{AC-LP-n} for any positive $\xnb$ and all sufficiently large $\gamma$. In particular, the dual optimizer bound in Proposition~\ref{prop:SD} simplifies to $\|\yn\|_\wass \le \ynb = 1$.
\end{Lem}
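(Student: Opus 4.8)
The plan is to verify the two clauses of Assumption~\ref{a:reg} for the program \eqref{AC-LP-n} and then extract the dual bound, and I expect the whole difficulty to sit in a single structural point: in \eqref{AC-LP-n} the variable $\rho$ is \emph{free}, whereas the generic template \ref{primal-n} keeps every coordinate inside the norm ball and Proposition~\ref{prop:SD} is stated in that fully constrained setting. I would therefore organise the argument around what this free coordinate does on the dual side, since that is what simultaneously certifies the regularity and produces the clean constant $\ynb = 1$.

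For Assumption~\ref{a:reg}\ref{a:reg:feas} I would simply exhibit a feasible point: taking $\alpha = 0$ collapses the constraint to $\rho \le \cost(s,a)$ for all $(s,a) \in K$, which holds for any $\rho \le \min_{K}\cost$. By Assumption~\ref{a:CM}\ref{a:CM:cost} the cost is non-negative and continuous on the compact cube $K$, so this minimum is finite, and $\|\alpha\|_\Rnorm = 0 \le \xnb$ for every $\xnb > 0$.

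For Assumption~\ref{a:reg}\ref{a:reg:inf-sup} I would pass to the dual. Writing $x_0 = (1,0)$ for the basis element carrying $\rho$, one has $\op x_0 \equiv -1$ and hence $\inner{\op x_0}{y} = -y(K)$; together with the objective coefficient $-1$ this makes the $\rho$-term of the Lagrangian equal to $\rho\,(y(K) - 1)$, so the unbounded minimization over the free $\rho$ forces the hard equality $y(K) = 1$ on every dual-feasible $y$. Since $y \in \cone^* = \Meas_+(K)$, the dual feasible set is then exactly $\Prob(K)$. This is precisely the regularity the inf-sup clause is meant to guarantee: the $x_0$-coordinate of $\opn^* y$ equals $-y(K)$, which vanishes on $\cone^*$ only at $y = 0$, so the null space of $\opn^*$ meets the positive cone trivially; moreover, for a non-negative measure $y(K) = \|y\|_\wass = \|y\|_*$ (the Wasserstein and total-variation norms agreeing on $\cone^*$, with $\ind$ the maximizer), so this single coordinate already dominates $\|y\|_*$. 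Because $\rho$ is unconstrained, the control of $\|y\|_*$ is furnished by this coordinate rather than by any finite inf-sup constant tied to the $\{u_i\}$, so $\gamma$ may be taken as large as needed and $\gamma\xnb > \|b\| = \|\cost\|_\lip$ is met for every $\xnb > 0$.

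It then remains to read off the constant. Every dual-feasible $y$, and in particular the optimizer $\yn$, is a probability measure, so $\|\yn\|_\wass = y(K) = 1$, which is the asserted value $\ynb = 1$; this is consistent with the estimate \eqref{yb} of Proposition~\ref{prop:SD} once the free $\rho$-coordinate is carried through (using a fixed, radius-independent lower bound $\Jlb \le \Jpn$). The one place that needs care is exactly this bookkeeping for the free variable—after the reduction to $\Prob(K)$ the Wasserstein norm is constant on the dual feasible set and no further estimation is required.
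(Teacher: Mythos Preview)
Your argument is correct in substance and reaches the same conclusion, but it takes a more direct route on the dual side than the paper does, and leaves one piece of bookkeeping informal that the paper makes explicit.

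Both you and the paper identify the free coordinate $\rho$ as the crux. You argue directly: unconstrained minimization over $\rho$ in the Lagrangian forces the hard constraint $y(K)=1$, so the dual feasible set collapses to $\Prob(K)$ and hence $\|\yn\|_\wass = y(K)=1$. This is valid and in fact gives the sharper \emph{equality} (the paper notes this refinement only later, in Remark~\ref{rem:ynb:AC}). The paper instead keeps everything inside the template of Proposition~\ref{prop:SD}: it replaces the free $\rho$ by a redundant bound $|\rho|\le \omega^{-1}\xnb$ via the auxiliary norm $\|(\rho,\alpha)\|_\omega \Let \max\{\omega|\rho|,\|\alpha\|_\Rnorm\}$, computes $\|\opn^* y\|_{\omega^*}\ge \omega^{-1}\|y\|_\wass$ so that $\gamma=\omega^{-1}$, and then lets $\omega\to 0$ in the formula~\eqref{yb} to obtain $\ynb=1$.

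The one place your write-up is softer is precisely the claim that ``Assumption~\ref{a:reg}\ref{a:reg:inf-sup} holds for all sufficiently large $\gamma$''. That clause is a norm inequality $\|\opn^* y\|_{\Rnorm^*}\ge \gamma\|y\|_*$ for a \emph{specified} norm on the full coordinate vector $(\rho,\alpha)$; you never name such a norm, so the inequality is asserted rather than checked. The paper's $\omega$-norm is exactly the device that makes this literal. Your direct dual argument bypasses the need for~\eqref{yb} altogether and delivers $\ynb=1$ cleanly, but if the lemma is read as also certifying Assumption~\ref{a:reg}\ref{a:reg:inf-sup} verbatim, you should either introduce the $\omega$-norm or state explicitly that you are establishing the \emph{conclusion} of Proposition~\ref{prop:SD} directly rather than its hypothesis.
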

		
\begin{proof}
Since $K$ is compact, for any nonnegative $\xnb$, the program \eqref{AC-LP-n} is feasible and the optimal value is bounded; recall that $\|(Q-I)u_i\|_\lip \le 1+\max\{L_Q,1\}$ from Lemma~\ref{lem:operator} and $\|\cost\|_\infty < \infty$ thanks to Assumption~\ref{a:CM}\ref{a:CM:cost}. Hence, the optimal value of \eqref{AC-LP-n} is bounded and, without loss of generality, one can add a redundant constraint $|\rho| \le \omega^{-1}\xnb$, where $\omega$ is a sufficiently small positive constant. In this view, the last constraint $\|\alpha\|_\Rnorm \le \xnb$ may be replaced with 
\begin{align}
\label{norm-AC}
	\|(\rho,\alpha)\|_{\omega} \Let \max\{\omega|\rho|,\|\alpha\|_\Rnorm\} \le \xnb, 
\end{align}
where $\|\cdot\|_\omega$ can be cast as the norm on the pair $(\rho,\alpha) \in \R\times\R^{n+1}$. Using the $\omega$-norm as defined in \eqref{norm-AC}, we can now directly translate the program~\eqref{AC-LP-n} into the semi-infinite framework of \ref{primal-n}. As mentioned above, the feasibility requirement in  Assumption~\ref{a:reg}\ref{a:reg:feas} immediately holds. In addition, observe that for every $y\in\cone^*$ we have 
\begin{align*}
	\|\opn^* y\|_{\omega^*} &= \sup_{\|(\rho,\alpha)\|_{\omega} \le 1} (\rho,\alpha) \bdot \big[-\inner{\ind}{y}, \inner{Qu_1-u_1}{y}, \cdots,\inner{Qu_n-u_n}{y}\big]\\	
	&= \sup_{\omega |\rho| \le 1}  - \rho\inner{\ind}{y} + \sup_{\|\alpha\|_\Rnorm\le 1} \alpha \bdot \big[\inner{Qu_1-u_1}{y}, \cdots,\inner{Qu_n-u_n}{y}\big] \\
	& \ge \omega^{-1}\|y\|_\wass,
\end{align*}
where the third line above follows from the equality $\inner{\ind}{y} = \|y\|_\wass$ for every $y$ in the positive cone $\cone^*$, and the fact that the second term in the second line is nonnegative. Since $\omega$ can be arbitrarily close to 0, the inf-sup requirement Assumption~\ref{a:reg}\ref{a:reg:inf-sup} holds for all sufficiently large $\gamma = \omega^{-1}$. 
The second assertion of the lemma follows from the bound \eqref{yb} in Proposition \ref{prop:SD}. To show this, recall that in the MDP setting $c = (-1,0) \in \R \times \Meas(S)$ (cf. \eqref{AC-setting}) with the respective vector $\cnew = [-1,0,\cdots,0] \in \R\times \R^{n}$ (cf. \ref{primal-n}). Thus, $\|\cnew\|_{\omega^*} = \sup_{\|(\rho,\alpha)\|_{\omega}\le 1} {(\rho,\alpha)} \bdot [-1,0,\cdots,0] = \omega^{-1}$, that helps simplifying the bound \eqref{yb} to
\begin{align*}
\|\yn\|_\wass \le \ynb \Let {\xnb\|\cnew\|_{\Rnorm^*} - \Jlb \over \gamma \xnb - \|b\|} = {\xnb \omega^{-1} + \|\cost\|_\infty  \over \omega^{-1} \xnb -\|\cost\|_\lip},  
\end{align*}
which delivers the desired assertion when $\omega$ tends to 0. 
\end{proof}
		
\begin{Rem}[AC dual optimizers bound]
	\label{rem:ynb:AC}
	As opposed to the general LP in Proposition~\ref{prop:SD}, Lemma~\ref{lem:MDP:bd-dual} implies that the dual optimizers for the AC problem is not influenced by the primal norm bound $\xnb$ and is uniformly bounded by $1$. In fact, this result can be strengthened to $\|\yn\|_\wass = 1$ due to the special minimax structure of the AC program~\eqref{AC-LP-n}. This refinement is not needed at this stage and we postpone the discussion to Section~\ref{subsec:smooth:MDP}. The feature discussed in this remark does, however, not hold for the class of long-run discounted cost problems, see Lemma~\ref{lem:MDP:DC} in Appendix~\ref{app:discouted:cost}. 
\end{Rem}
	
Now we are in a position to translate Theorem~\ref{thm:inf-semi} to the MDP setting for the AC problem \eqref{AC-LP}.

\begin{Cor}[MDP semi-infinite approximation]
	\label{cor:MDP:inf-semi}
	Let $\Jac$ and $u\opt$ be the optimal value and an optimizer for the AC program~\eqref{AC-LP}, respectively. Consider the semi-infinite program \eqref{AC-LP-n} where $\xnb > \|\cost\|_\lip$, and let $\Uball \Let \{\sum_{i=1}^{n} \alpha_i u_i ~:~ \|\alpha\|_\Rnorm \le \xnb \}$. Then, the optimal value of \eqref{AC-LP-n} satisfies the inequality
		\begin{align*}
			0 \le \Jac - \Jacn \le \big(1 + \max\{L_Q, 1\}\big) \big\|u\opt - \proj_{\Uball}(u\opt)\big\|_\lip.
		\end{align*}
\end{Cor}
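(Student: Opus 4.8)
The plan is to apply Theorem~\ref{thm:inf-semi} directly to the MDP instance, having already verified via Lemmas~\ref{lem:operator} and~\ref{lem:MDP:bd-dual} all the quantities that appear in the general error bound~\eqref{inf-semi error}. Recall that under the identifications in~\eqref{AC-setting} and~\eqref{AC:pairs}, the infinite LP~\ref{primal-inf} is precisely the AC program~\eqref{AC-LP}, with $\Jp = -\Jac$ and $\Jpn = -\Jacn$. Consequently the general inequality $0 \le \Jpn - \Jp$ becomes $0 \le \Jac - \Jacn$, which reproduces the sign convention in the stated corollary. So the task reduces to specializing each of the three factors $\|c\|_*$, $\ynb$, and $\|\op\|$, together with the residual $\|r_n\|$, to the present setting.

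First I would substitute the cost vector $c = (-1,0) \in \R \times \Meas(S)$ from~\eqref{AC-setting}. By the dual norm formula in~\eqref{norm-1}, $\|c\|_* = |c_1| + \|c_2\|_\wass = 1 + 0 = 1$. Next, for the dual optimizer bound I would invoke the second assertion of Lemma~\ref{lem:MDP:bd-dual}, which gives $\ynb = 1$ uniformly (independent of $\xnb$). For the operator norm factor, note that $\op(\rho,u)(s,a) = -\rho - u(s) + Qu(s,a)$, and since the norm constraint and residual act only on the $u$-component (the finite subspace $\X_n$ is spanned by $x_0 = (1,0)$ together with the $x_i = (0,u_i)$, and the residual lies in the $\Lip(S)$ coordinate), the relevant operator norm is controlled by $\|I - Q\|$. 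By Lemma~\ref{lem:operator}, $\|I - Q\| \le 1 + \max\{L_Q,1\}$. Assembling these into~\eqref{inf-semi error} yields the coefficient $\|c\|_* + \ynb\|\op\| \le 1 + 1\cdot\big(1+\max\{L_Q,1\}\big)$, and I expect a small bookkeeping point here: the corollary's leading constant is $1 + \max\{L_Q,1\}$ rather than $2 + \max\{L_Q,1\}$, so the $\rho$-direction must be treated as incurring no residual error. Indeed the hypothesis $\xnb > \|\cost\|_\lip$ and the structure of $\Uball$ ensure that the projection on the $\rho$-coordinate is exact, so the $\|c\|_*$ contribution from $c_1$ is absorbed and only the $u$-component of $\op^*\yn - c$ multiplies the residual.

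For the residual term, I would identify $r_n = x\opt - \proj_{\uball}(x\opt)$ with the projection of the optimizer onto $\uball$ as in~\eqref{uball}. Since the optimizer $x\opt = (\rho\opt, u\opt)$ has its nontrivial approximation error only in the $\Lip(S)$ coordinate, and since $\Uball = \{\sum_i \alpha_i u_i : \|\alpha\|_\Rnorm \le \xnb\}$ is exactly the projection of $\uball$ onto that coordinate, the residual norm reduces to $\|r_n\| = \|u\opt - \proj_{\Uball}(u\opt)\|_\lip$. The main obstacle, and the step requiring care, is precisely this decoupling of the two coordinates: I must argue that the $\rho$-direction contributes nothing to the residual (because it is one-dimensional and unconstrained once $\xnb$ exceeds the relevant threshold), so that the full residual is carried by the Lipschitz component. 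Once that is established, substituting $\|r_n\| = \|u\opt - \proj_{\Uball}(u\opt)\|_\lip$ and the coefficient $1 + \max\{L_Q,1\}$ into~\eqref{inf-semi error} gives exactly the claimed bound, completing the proof.
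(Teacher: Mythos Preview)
Your proposal is correct and follows essentially the same route as the paper, but you take an unnecessary detour. You first invoke the coarser bound~\eqref{inf-semi error}, obtain the constant $2 + \max\{L_Q,1\}$, and then have to argue informally that the $\|c\|_*$ contribution disappears because the $\rho$-component of the residual vanishes. The paper's proof avoids this backtracking by using the \emph{first} (sharper) assertion of Theorem~\ref{thm:inf-semi}, namely $\Jpn - \Jp \le \inner{r_n}{\op^*\yn - c}$, directly. Since $r_n = (0,\,u\opt - \proj_{\Uball}(u\opt))$ has zero first coordinate and $c = (-1,0)$, one gets $\inner{r_n}{c} = 0$ immediately, so the bound reduces to $\inner{\op r_n}{\yn} \le \|I-Q\|\,\|r_n\|_\lip\,\|\yn\|_\wass$, and the constant $1+\max\{L_Q,1\}$ falls out cleanly from Lemmas~\ref{lem:operator} and~\ref{lem:MDP:bd-dual}. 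One small point: the reason the $\rho$-component of $r_n$ is zero is not the hypothesis $\xnb > \|\cost\|_\lip$ per se, but rather that in the semi-infinite program~\eqref{AC-LP-n} the norm constraint is imposed only on $\alpha$, leaving $\rho$ effectively unconstrained (cf.\ the $\omega$-norm device in the proof of Lemma~\ref{lem:MDP:bd-dual}).
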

		
		\begin{proof}
			We first note that the existence of the optimizer $u\opt$ is guaranteed under Assumption \ref{a:CM} \cite[Theorem~12.4.2]{ref:Hernandez-99}.
			The proof is a direct application of Theorem~\ref{thm:inf-semi} under the preliminary results in Lemma \ref{lem:MDP:bd-dual} and \ref{lem:operator}. Observe that the projection error is $r_n \Let (\rho\opt,u\opt) - \proj_{\Uball}(\rho\opt,u\opt) = \big(0, u\opt - \proj_{\Uball}(u\opt)\big)$, resulting in $\inner{r_n}{c} = 0$. Thanks to this observation Lemma~\ref{lem:operator}, the assertion of Theorem~\ref{thm:inf-semi} translates to 
			\begin{align*}
			0 \le \Jac - \Jacn & = \Jpn - \Jp \le \inner{r_n}{\op^*\yn - c} = \inner{\op r_n}{\yn} \le \|I-Q\|\, \|r_n\|_\lip \, \|\yn\|_\wass\\
			& \le (1 + \max\{L_Q, 1\}) \|u\opt - \proj_{\Uball}(u\opt)\|_\lip.  \qedhere
			\end{align*}
		\end{proof}
		
		Observe that if from the beginning we consider the norm $\|\cdot\|_\infty$ on the spaces $\X$ and $\B$, it is not difficult to see that the operator norm in Lemma~\ref{lem:operator} simplifies to $2$ (recall that $Q$ is a stochastic kernel). Thus, the semi-infinite bound reduces to $\Jac - \Jacn \le 2 \|u\opt - \proj_{\Uball}(u\opt)\|_\infty$. One may arrive at this particular observation through a more straightforward approach: Using the shorthand notation $(Q-I)u \Let Qu - u$, we have 
		\begin{align*}
		\Jac - \Jacn & \le \min_{k\in K} \Big( \big(Q-I\big)u\opt(k) + \cost(k) \Big) - \min_{k\in K} \Big( \big(Q-I\big)\proj_{\Uball}(u\opt)(k) + \cost(k) \Big)\\
		& \le \max_{k\in K} \big(Q-I\big)\big(u\opt - \proj_{\Uball}(u\opt)\big)(k) \le \big\|\big(Q-I\big) \big(u\opt - \proj_{\Uball}(u\opt)\big)\big\|_\infty \\
		& \le 2\big\|u\opt - \proj_{\Uball}(u\opt)\big\|_\infty.
		\end{align*}
		Theorem~\ref{thm:inf-semi} is a generalization to the above observation in two respects: 
		\begin{itemize}
			\item It holds for a general LP that, unlike the AC problem \eqref{AC-LP}, may not necessarily enjoy a min-max structure.
			\item The result reflects how the bound on the decision space (i.e., $\xnb$ in \ref{primal-n}) influences the dual optimizers as well as the approximation performance in generic normed spaces. 
		\end{itemize}
	The latter feature is of particular interest as the boundedness of the decision space is often an a~priori requirement for optimization algorithms, see for instance \cite{ref:nesterov-book-04} and the results in Section~\ref{sec:semi-fin:smoothing}. The approximation error from the original infinite LP to the semi-infinite version is quantified in terms of the projection residual of the value function. Clearly, this is where the choice of the finite dimensional ball $\Uball$ plays a crucial role. We close this section with a remark on this point. 
		
		\begin{Rem}[Projection residual]\label{rem:projection}
			The residual error $\big\|u\opt - \proj_{\Uball}(u\opt)\big\|_\lip$ can be approximated by leveraging results from the literature on universal function approximation. Prior information about the value function $u\opt$ may offer explicit quantitative bounds. For instance, for MDP under Assumption~\ref{a:CM} we know that $u\opt$ is Lipschitz continuous. For appropriate choice of basis functions, we can therefore ensure a convergence rate of ${n}^{-1/\dim(S)}$ where $\dim(S)$ is the dimension of the state-action set $S$, see for instance \cite{ref:Farouki-12} for polynomials and \cite{ref:Olver-09} for the Fourier basis functions. 
			
		\end{Rem}
		
		\section{Semi-infinite to Finite Programs: Randomized Approach} 
		\label{sec:semi-fin:rand}
		We study conditions under which one can provide a finite approximation to the semi-infinite programs of the form \ref{primal-n}, that are in general known to be computationally intractable --- NP-hard \cite[p.~16]{ref:BenTal-09}. We approach this goal by deploying tools from two areas, leading to different theoretical guarantees for the proposed solutions. This section focuses on a randomized approach and the next section is dedicated to an iterative gradient-based decent method. The solution of each of these methods comes with a~priori as well as a posteriori performance certificates. 
		
		\subsection{Randomized approach} 
		\label{subsec:rand}
		
		We start with a lemma suggesting a simple bound on the norm of the operator $\opn$ in \eqref{Ln}. We will use the bound to quantify the approximation error of our proposed solutions. 
		
		\begin{Lem}[Semi-infinite operator norm]\label{lem:opn}
			Consider the operator $\opn:\R^n \ra \B$ as defined in \eqref{Ln}. Then,
			\begin{align}\label{opt}
			\|\opn\| \Let \sup_{\alpha \in \R^n} {\|\opn \alpha\| \over \|\alpha\|_\Rnorm} \le \|\op\|\ratio, \qquad  \ratio \Let \sup_{\|\alpha\|_\Rnorm \le 1} \|\alpha\|_{\ell_1},
			\end{align}
			where the constant $\ratio$ is the equivalence ratio between the norms $\|\cdot\|_\Rnorm$ and $\|\cdot\|_{\ell_1}$.\footnote{The constant $\ratio$ is indexed by $n$ as it potentially depends on the dimension of $\alpha \in \R^n$. } 
		\end{Lem}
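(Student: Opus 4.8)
The goal is to bound the operator norm $\|\opn\|$ of the finite-dimensional operator $\opn:\R^n \ra \B$, where $\opn \alpha = \sum_{i=1}^n \alpha_i \op x_i$. The plan is to start from the definition of the operator norm as a supremum of $\|\opn\alpha\|/\|\alpha\|_\Rnorm$ over $\alpha \in \R^n$, and reduce everything to the underlying operator norm $\|\op\|$ of $\op$ on $\X$ by exploiting the triangle inequality together with the normalization $\|x_i\| = 1$ imposed on the basis elements.

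First I would fix $\alpha \in \R^n$ and apply the triangle inequality in $\B$ to the finite sum, yielding
\begin{align*}
\|\opn \alpha\| = \Big\| \sum_{i=1}^n \alpha_i \op x_i \Big\| \le \sum_{i=1}^n |\alpha_i| \, \|\op x_i\|.
\end{align*}
Next I would bound each term $\|\op x_i\|$ by the operator norm: since $\|\op x_i\| \le \|\op\| \, \|x_i\|$ and the basis elements are normalized so that $\|x_i\| = 1$, we get $\|\op x_i\| \le \|\op\|$. Pulling this out of the sum gives
\begin{align*}
\|\opn \alpha\| \le \|\op\| \sum_{i=1}^n |\alpha_i| = \|\op\| \, \|\alpha\|_{\ell_1}.
\end{align*}
Dividing by $\|\alpha\|_\Rnorm$ and taking the supremum over $\alpha$ then yields
\begin{align*}
\|\opn\| = \sup_{\alpha \in \R^n} \frac{\|\opn \alpha\|}{\|\alpha\|_\Rnorm} \le \|\op\| \, \sup_{\alpha \in \R^n} \frac{\|\alpha\|_{\ell_1}}{\|\alpha\|_\Rnorm} = \|\op\| \, \sup_{\|\alpha\|_\Rnorm \le 1} \|\alpha\|_{\ell_1} = \|\op\|\ratio,
\end{align*}
where the last equalities simply rewrite the supremum over the unit ball of $\|\cdot\|_\Rnorm$ as the norm-equivalence ratio $\ratio$, using positive homogeneity of both norms.

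This argument is entirely routine, so I do not expect any genuine obstacle: the only points requiring care are confirming that the normalization $\|x_i\| = 1$ is exactly what makes $\|\op x_i\| \le \|\op\|$ (recalled from the paragraph introducing $\X_n$), and that the quantity $\ratio = \sup_{\|\alpha\|_\Rnorm \le 1}\|\alpha\|_{\ell_1}$ is finite because all norms on the finite-dimensional space $\R^n$ are equivalent. The potential subtlety worth flagging is that the bound is not claimed to be tight; it is a clean upper estimate that suffices for the subsequent approximation-error analysis, and the dependence of $\ratio$ on $n$ (as noted in the footnote) is carried through transparently.
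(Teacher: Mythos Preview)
Your proof is correct and essentially identical to the paper's. The only cosmetic difference is the order of operations: the paper first uses linearity of $\op$ to write $\|\opn\alpha\| = \|\op(\sum_i \alpha_i x_i)\| \le \|\op\|\,\|\sum_i \alpha_i x_i\|$ and then applies the triangle inequality in $\X$, whereas you apply the triangle inequality in $\B$ first and then bound each $\|\op x_i\|$; both routes reach $\|\opn\alpha\|\le \|\op\|\,\|\alpha\|_{\ell_1}$ and conclude the same way.
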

		
		\begin{proof}
			The proof follows directly from the definition of the operator norm, that is, 
			$$\|\opn \alpha\| = \Big\|\sum_{i=1}^{n}\alpha_i \op x_i\Big\| \le \|\op\|\Big\|\sum_{i=1}^{n}\alpha_i x_i \Big\|,$$
			together with the inequality $\big\|\sum_{i=1}^{n}\alpha_i x_i \big \| \le \|\alpha\|_{\ell_1} \max_{i \le n} \|x_i\| = \|\alpha\|_{\ell_1}$, which concludes the proof. 
		\end{proof}
		
		Since $\cone$ is a closed convex cone, then $\cone^{**} = \cone$ \cite[p.\ 40]{ref:Anderson-87}, and as such the conic constraint in program \ref{primal-n} can be reformulated as 
		\begin{align}
		\label{conic-const}
		\opn \alpha \geqc{\cone} b \qquad \Llra \qquad \inner{\opn \alpha - b}{y} \ge 0, \quad \forall y \in \Kb \Let \ext\{y \in \cone^* : \|y\|_* = 1 \}, 
		\end{align}
		where $\ext\{B\}$ denotes the extreme points of the set $B$, i.e., the set of points that cannot be represented as a strict convex combination of some other elements of the set. Notice that the norm constraint as well as the restriction to the extreme points in the definition of $\Kb$ in \eqref{conic-const} does not sacrifice any generality, as conic constraints are homogeneous. These restrictions are introduced to improve the approximation errors. In what follows, however, one can safely replace the set $\Kb$ with any subset of the cone $\cone^*$ whose closure contains $\Kb$. This adjustment may be taken into consideration for computational advantages. Let $\PP$ be a Borel probability measure supported on $\Kb$, and $\{y_j\}_{j\le N}$ be independent, identically distributed (i.i.d.) samples generated from $\PP$. Consider the \emph{scenario} counterpart of the program \ref{primal-n} defined as
		\begin{align} 
		\label{primal-n,N} 
		\tag{$\Prim_{n,N}$}
		\JpnN \Let 
		\left\{ \begin{array}{ll}
		\Min{\alpha\in\R^n} & \alpha \bdot \cnew   \\
		\st & \alpha \bdot \opn^* y_j \ge \inner{b}{y_j},  \quad  j \in \{1,\cdots,N\}\\
		& \|\alpha\|_{\Rnorm} \le \xnb,
		\end{array} \right.
		\end{align}
		where the adjoint operator $\opn^*:\B \ra \R^n$ is introduced in \eqref{Ln}. The optimization problem \ref{primal-n,N} is a standard finite convex program, and thus computationally tractable whenever the norm constraint $\|\alpha\|_\Rnorm \le \xnb$ is tractable. Program \ref{primal-n,N} is a relaxation of \ref{primal-n}, i.e., $ \Jpn \ge \JpnN$; note that $\JpnN$ is a random variable, therefore the relaxation error $\Jpn - \JpnN$ can only be interpreted in a probabilistic sense. 
		
		\begin{Def}[Tail bound]
			\label{def:tail}
			Given a probability measure $\PP$ supported on $\Kb$, we define the function $p:\R^n \times \R_+ \ra [0,1]$ as
			\begin{align*}
			p(\alpha,\zeta) \Let \PP\Big[y ~ : ~  \supp{\Kb}{-\opn \alpha + b}<\inner{-\opn \alpha + b}{y} + \zeta \Big],
			\end{align*}
			where $\sigma_{\Kb}(\cdot) \Let \sup_{y\in\Kb}\inner{\cdot}{y}$ is the support function of $\Kb$. We call $h:\R^n\times [0,1] \ra \R_+$ a \emph{tail bound} (TB) of the program \ref{primal-n,N}, if for all $\eps \in [0,1]$ and $\alpha$ we have	
			\begin{align*}
			h(\alpha,\eps) \ge \sup \big \{ \zeta ~:~ p(\alpha,\zeta) \le \eps \big \}.
			\end{align*}
		\end{Def}		
		
		The TB function in Definition~\ref{def:tail} can be interpreted as a {\em shifted} quantile function of the mapping $y \mapsto \inner{-\opn \alpha + b}{y}$ on $\Kb$--- the ``shift" is referred to the maximum value of the mapping which is $\supp{\Kb}{-\opn \alpha + b}$. 		
		TB functions depend on the probability measure $\PP$ generating the scenarios $\{y_j\}_{j\le N}$ in the program \ref{primal-n,N}, as well as the properties of the optimization problem. Definition \ref{def:tail} is rather abstract and not readily applicable. The following example suggests a more explicit, but not necessarily optimal, candidate for a TB.
		
		\begin{Ex}[TB candidate] \label{ex:TB}
			Let $g:\R_{+}\to [0,1]$ be a non-decreasing function such that for any $\kappa \in \Kb$ we have $g(\gamma) \le \PP\big[\ball{\kappa}{\gamma} \big]$, where $\ball{\kappa}{\gamma}$ is the open ball centered at $\kappa$ with radius $\gamma$; note that function $g$ depends on the choice of the norm on $\Y$. Then, a candidate for a TB function of the program \ref{primal-n,N} is 
			$$h(\alpha,\eps) \Let \| \opn \alpha - b \| g^{-1}(\eps) \le \big(\ratio \|\op\| \|\alpha\|_\Rnorm + \|b\|\big)g^{-1}(\eps),$$ 
			where the inverse function is understood as $g^{-1}(\eps) \Let \sup\{\gamma \in~\R_+~: g(\gamma) \le \eps\}$, and $\ratio$ is the constant ratio defined in \eqref{opt}. 
			
			To see this note that according to Definition~\ref{def:tail} we have 
			\begin{align*}
			p(\alpha,\zeta) 	&= \PP\Big[y ~ : ~  \sup_{\kappa\in\Kb}\inner{-\opn \alpha + b}{\kappa -y}<\zeta \Big] \\
			&= \inf_{\kappa\in\Kb}\PP\Big[y ~ : ~  \inner{-\opn \alpha + b}{\kappa -y}<\zeta \Big] \\
			&\geq \inf_{\kappa\in\Kb} \PP\Big[ y ~ : ~ \| \opn \alpha - b \| \| y-\kappa \|_{*} <\zeta \Big] \\
			&= \inf_{\kappa\in\Kb} \PP\Big[ \ball{\kappa}{\gamma(\zeta)} \Big] \geq g(\gamma(\zeta)), \quad \gamma(\zeta) \Let {\zeta \| \opn \alpha- b \|^{-1}}.
			\end{align*}
			Thus, if $p(\alpha,\zeta)\leq \eps$, then $g(\gamma(\zeta))\leq \eps$ and by construction of the inverse function $g^{-1}$ we have $\zeta \|\opn \alpha - b\|^{-1} \leq g^{-1}(\eps).$ In view of Definition \ref{def:tail}, this observation readily suggests that the function $h(\alpha,\varepsilon) \Let \| \opn \alpha - b \| g^{-1}(\eps)$ is indeed a TB candidate, and the suggested upper bound follows readily from Lemma~\ref{lem:opn}.
		\end{Ex}

		\begin{Thm}[Randomized approximation error]
			\label{thm:semi-fin:rand}
			Consider the programs \ref{primal-n} and \ref{primal-n,N} with the associated optimum values $\Jpn$ and $\JpnN$, respectively. Let Assumption \ref{a:reg} hold, $\alpha\opt_N$ be the optimizer of the program \ref{primal-n,N}, and the function $h$ be a TB as in Definition \ref{def:tail}. Given $\eps, \beta$ in $(0,1)$, we define 
			\begin{align} 
			\label{N}
			\NN(n, \eps, \beta) \Let \min \Big\{ N\in \N ~:~ \sum_{i=0}^{n-1}  {N \choose i} \eps^{i}(1-\eps)^{N-i}\leq\beta \Big\}.
			\end{align}
			For all positive parameters $\eps, \beta$ and $N \ge \NN(n,\eps,\beta)$ we have
			\begin{subequations}
				\label{eq:thm:semi-fin:rand}	
				\begin{align}
				\label{eq:thm:semi-fin:rand:1} 
				\PP^N \bigg[0\leq \Jpn - \JpnN \le \ynb h\big(\alpha\opt_N,\eps \big) \bigg] \ge 1-\beta,
				\end{align}
				where the constant $\ynb$ is defined as in \eqref{yb}. In particular, suppose the function $h$ is the TB candidate from Example \ref{ex:TB} with corresponding $g$ function, and 
				\begin{align}
				\label{NN}
				N \ge \NN\big(n,g(z_n\eps),\beta\big), \qquad z_n \Let \Big(\ynb\big(\xnb \ratio\|\op\| + \|b\|\big)\Big)^{-1}
				\end{align} 
				where $\ratio$ is the ratio constant defined in Lemma~\ref{lem:opn}. We then have 
				\begin{align}
				\label{eq:thm:semi-fin:rand:2} 	
				\PP^N \Big[0\leq \Jpn - \JpnN  \le  \eps \Big] \ge 1-\beta \,.
				\end{align}
			\end{subequations}
		\end{Thm}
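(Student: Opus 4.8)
The plan is to combine the sample-complexity guarantee of the scenario approach with the strong-duality machinery of Proposition~\ref{prop:SD}. The lower bound $\Jpn - \JpnN \ge 0$ is immediate and deterministic: program \ref{primal-n,N} enforces only $N$ of the infinitely many constraints of \ref{primal-n} and is therefore a relaxation, so its minimum cannot exceed $\Jpn$. Everything else concerns the upper bound, which is where the randomness in $\{y_j\}_{j\le N}$ enters.

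First I would invoke the scenario optimization theorem. Program \ref{primal-n,N} is a convex program in the $n$-dimensional variable $\alpha$ (the deterministic constraint $\|\alpha\|_\Rnorm \le \xnb$ only shrinks the feasible set without altering the support-constraint count), it is feasible by Assumption~\ref{a:reg}\ref{a:reg:feas}, and its constraints are i.i.d.\ from $\PP$. The definition of $\NN(n,\eps,\beta)$ in \eqref{N} is exactly the binomial tail governing the scenario bound, so for $N \ge \NN(n,\eps,\beta)$ the optimizer $\alpha\opt_N$ satisfies, with $\PP^N$-probability at least $1-\beta$, the violation bound $\PP[\,y : \inner{\opn\alpha\opt_N - b}{y} < 0\,] \le \eps$. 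The crucial observation is to identify this quantity with $p(\alpha\opt_N, s)$ evaluated at $s \Let \supp{\Kb}{-\opn\alpha\opt_N + b}$: setting $\zeta = s$ in Definition~\ref{def:tail} turns the threshold $s-\zeta$ into $0$, so $p(\alpha\opt_N, s)$ is precisely the violation probability. Since $p(\alpha\opt_N, \cdot)$ is non-decreasing, $p(\alpha\opt_N, s) \le \eps$ places $s$ in the set $\{\zeta : p(\alpha\opt_N, \zeta) \le \eps\}$, and the defining property of a TB yields $s \le h(\alpha\opt_N, \eps)$ on the same high-probability event.

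Next I would convert this worst-case constraint residual $s$ into the value gap by duality. By the strong duality of Proposition~\ref{prop:SD}, $\Jpn = \Jdn = \inner{b}{\yn} - \xnb\|\opn^*\yn - \cnew\|_{\Rnorm^*}$ for a dual optimizer $\yn \in \cone^*$, while $\JpnN = \alpha\opt_N\bdot\cnew$. Writing $\alpha\opt_N\bdot\cnew = \alpha\opt_N\bdot(\cnew - \opn^*\yn) + \inner{\opn\alpha\opt_N}{\yn}$ and bounding the first term below by $-\xnb\|\opn^*\yn - \cnew\|_{\Rnorm^*}$ through H\"older's inequality and $\|\alpha\opt_N\|_\Rnorm \le \xnb$, the two regularizer terms cancel and leave $\Jpn - \JpnN \le \inner{-\opn\alpha\opt_N + b}{\yn}$. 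The extreme-point reformulation \eqref{conic-const} shows that testing over $\Kb$ suffices, so $\sup_{y\in\cone^*,\,\|y\|_*\le 1}\inner{-\opn\alpha\opt_N+b}{y} = \max\{0, s\}$; homogeneity and $\|\yn\|_* \le \ynb$ from \eqref{yb} then give $\inner{-\opn\alpha\opt_N+b}{\yn} \le \ynb\, s \le \ynb\, h(\alpha\opt_N,\eps)$, which is \eqref{eq:thm:semi-fin:rand:1}.

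Finally, for \eqref{eq:thm:semi-fin:rand:2} I would specialize to the TB candidate of Example~\ref{ex:TB}, run the first part with violation level $g(z_n\eps)$ (hence the sample count $\NN(n, g(z_n\eps), \beta)$), and use $h(\alpha\opt_N, g(z_n\eps)) = \|\opn\alpha\opt_N - b\|\, g^{-1}(g(z_n\eps))$. Lemma~\ref{lem:opn} with $\|\alpha\opt_N\|_\Rnorm \le \xnb$ gives $\|\opn\alpha\opt_N - b\| \le \ratio\|\op\|\xnb + \|b\| = (z_n\ynb)^{-1}$, and $g^{-1}(g(z_n\eps)) = z_n\eps$ collapses $\ynb h(\alpha\opt_N, g(z_n\eps))$ to exactly $\eps$. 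I expect the main obstacle to be the clean invocation of the scenario bound — pinning down that the relevant Helly/support-constraint dimension is $n$ (and that the deterministic norm constraint does not inflate it), together with the non-degeneracy regime in which the binomial tail \eqref{N} is valid — and, secondarily, confirming that the generalized inverse satisfies $g^{-1}(g(\cdot))=\mathrm{id}$ in the regime where $g$ is strictly increasing, so that the last substitution produces precisely $\eps$ rather than a looser constant.
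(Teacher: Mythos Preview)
Your proposal is correct and follows essentially the same route as the paper. The only organizational difference is in how the key inequality $\Jpn-\JpnN\le\inner{b-\opn\alpha\opt_N}{\yn}$ is obtained: the paper factors this through a perturbation argument (Lemmas~\ref{lem:Lemma0}--\ref{lem:Lemma1}), whereas you derive it directly from the strong-duality expression for $\Jpn$ and a H\"older step on $\alpha\opt_N\bdot(\cnew-\opn^*\yn)$; the remaining steps (the Campi--Garatti scenario bound, the identification of the violation probability with $p(\alpha\opt_N,\supp{\Kb}{\cdot})$ as in Lemma~\ref{lem:ccp}, the support-function bound via $\yn/\|\yn\|_*\in\conv(\Kb)$, and the specialization to Example~\ref{ex:TB}) are identical. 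Your two flagged caveats are not real obstacles: the deterministic norm constraint does not affect the support-constraint count in Theorem~\ref{thm:Campi}, and the paper tacitly relies on the same mild monotonicity of $g$ that makes $g^{-1}(g(z_n\eps))\le z_n\eps$.
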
	
		
		Theorem~\ref{thm:semi-fin:rand} extends the result \cite[Theorem 3.6]{ref:MohSut-13} in two respects: 
		\begin{enumerate} [label=$\bullet$, itemsep = 1mm, topsep = -1mm]
			\item The bounds \eqref{eq:thm:semi-fin:rand} are described in terms of a generic norm and the corresponding dual optimizer bound.
			\item Through the optimizer of \ref{primal-n,N}, the bounds involve an a posteriori element (cf. \eqref{eq:thm:semi-fin:rand:1} to \eqref{eq:thm:semi-fin:rand:2}). 
		\end{enumerate}
		Before proceeding with the proof, we first remark on the complexity of the a~priori bound of Theorem~\ref{thm:semi-fin:rand}, its implications for an appropriate choice of $\xnb$, and its dependence on the dual pair norms. 
		
		\begin{Rem}[Curse of dimensionality]
			\label{rem:curse}
			The TB function $h$ of Example~\ref{ex:TB} may grow exponentially in the dimension of the support set $\Kb$ (i.e., $h(\alpha,\eps) \propto \eps^{-\dim(\Kb)}$). Since $\NN(n,\cdot,\beta)$ admits a linear growth rate, the a~priori bound \eqref{eq:thm:semi-fin:rand:2} effectively leads to an exponential number of samples in the precision level $\eps$, an observation related to the curse of dimensionality \cite[Remark~3.9]{ref:MohSut-13}. To mitigate this inherent computational complexity, one may resort to a more elegant sampling approach so that the required number of samples $\NN$ has a sublinear rate in the second argument, see for instance \cite{ref:NemShap-06}. 
		\end{Rem}
		
		\begin{Rem}[Optimal choice of $\xnb$]
			\label{rem:theta}
			In view of the a priori error in Theorem~\ref{thm:semi-fin:rand}, the parameter $\xnb$ may be chosen so as to minimize the required number of samples. To this end, it suffices to maximize $z_n$ defined in \eqref{NN} over all $\xnb> \|b\|\gamma^{-1}$, see Assumption~\ref{a:reg}\ref{a:reg:inf-sup}, where $\ynb$ is defined in \eqref{yb}. One can show that the optimal choice in this respect is analytically available as 
			\begin{align*}
			\xnb\opt \Let {\|b\| \over \gamma} + \sqrt{\Big({\|b\| \over \gamma} + {\|b\| \over \ratio\|\op\|} \Big)\Big({\|b\| \over \gamma}- {\Jlb \over \|\cnew\|_{\Rnorm*}} \Big)} \, ,
			\end{align*}
			where $\Jlb$ is a lower bound on the optimal value of \ref{primal-n} used in \eqref{yb}.
		\end{Rem}
		
		\begin{Rem}[Norm impact on finite approximation]
			\label{rem:norm-fin}
			Besides to what has already been highlighted in Remark~\ref{rem:norm-semi}, the choice of norms in the dual pairs of normed vector spaces also has an impact on the function $g^{-1}(\eps)$. More specifically, the stronger the norm in the space $\B$, the larger the balls in the dual space $\Y$, and thus the smaller the function $g^{-1}$. 
		\end{Rem}
		
		To prove Theorem~\ref{thm:semi-fin:rand} we need a few preparatory results.
		
		\begin{Lem}[Perturbation function]
			\label{lem:Lemma0}
			Given $\delta\in\B$, consider the $\delta$-\emph{perturbed} program of \ref{primal-n} defined as
			\begin{align} 
			\label{primal-n-pert} 
			\tag{$\Prim_n(\delta)$}
			\Jpnd \Let \left\{ \begin{array}{ll}
			\Inf{\alpha\in\R^n} & \alpha \bdot \cnew   \\
			\st & \opn \alpha \geqc{\cone} b - \delta \\
			& \|\alpha\|_\Rnorm \le \xnb.
			\end{array} \right.
			\end{align}
			Under Assumption \ref{a:reg}, we then have $\Jpn - \Jpnd \leq \inner{\delta}{\yn}$, where $\yn$ is an optimizer of \ref{dual-n}.
		\end{Lem}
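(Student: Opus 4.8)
The plan is to exploit that the perturbation enters the data only through the right-hand side, $b$ replaced by $b-\delta$, and that this dependence is \emph{linear} in the dual objective while leaving the dual feasible set $\cone^*$ untouched. Concretely, I would dualize \ref{primal-n-pert} exactly as \ref{dual-n} was obtained from \ref{primal-n}, and then certify the desired lower bound on $\Jpnd$ simply by evaluating the perturbed dual objective at the fixed dual optimizer $\yn$ of the unperturbed program. The zero duality gap of the unperturbed pair, supplied by Proposition~\ref{prop:SD}, then lets me identify the resulting expression with $\Jpn$.

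First I would record that the (Lagrangian) dual of \ref{primal-n-pert} is
\[
\Jdnd = \sup_{y \in \cone^*}\Big\{ \inner{b - \delta}{y} - \xnb \|\opn^* y - \cnew\|_{\Rnorm^*} \Big\},
\]
which is literally \ref{dual-n} with $b$ replaced by $b-\delta$; this follows from the same minimax manipulation used to derive \ref{dual-n}, since $\inner{\opn\alpha}{y} = \alpha \bdot \opn^* y$ and $\inf_{\|\alpha\|_\Rnorm \le \xnb} \alpha \bdot (\cnew - \opn^* y) = -\xnb\|\opn^* y - \cnew\|_{\Rnorm^*}$. Weak duality (the minimax inequality $\inf\sup \ge \sup\inf$) gives $\Jpnd \ge \Jdnd$. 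Since $\yn$ optimizes \ref{dual-n} it lies in $\cone^*$ and is therefore feasible for the perturbed dual, so by bilinearity
\[
\Jdnd \ge \inner{b - \delta}{\yn} - \xnb\|\opn^* \yn - \cnew\|_{\Rnorm^*} = \Big( \inner{b}{\yn} - \xnb\|\opn^* \yn - \cnew\|_{\Rnorm^*} \Big) - \inner{\delta}{\yn}.
\]
By Proposition~\ref{prop:SD} the unperturbed pair has zero duality gap, and as $\yn$ attains the dual optimum the parenthesized term equals $\Jdn = \Jpn$. Chaining the two displays yields $\Jpnd \ge \Jpn - \inner{\delta}{\yn}$, which is exactly the claim.

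The argument is short, and the only subtleties are bookkeeping ones. The zero duality gap invoked in the final step requires only the feasibility part Assumption~\ref{a:reg}\ref{a:reg:feas}, which Proposition~\ref{prop:SD} already provides; the inf-sup part plays no role here. One should also note the degenerate case in which \ref{primal-n-pert} is infeasible: then $\Jpnd = +\infty$ and the inequality is trivially satisfied, so no separate treatment is needed. The main point to get right is that replacing $b$ by $b-\delta$ does not alter the dual feasible set $\cone^*$, which is precisely what allows the fixed multiplier $\yn$ to serve as a feasible certificate for the perturbed problem; there is no genuine analytic obstacle beyond this observation.
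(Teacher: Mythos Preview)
Your proof is correct and follows essentially the same route as the paper: introduce the dual \ref{dual-n-pert} of the perturbed program, use that $\yn\in\cone^*$ is feasible there, split $\inner{b-\delta}{\yn}$ by bilinearity, and combine weak duality for the perturbed pair with the zero duality gap of Proposition~\ref{prop:SD} for the unperturbed pair. The paper organizes the same ingredients as an upper bound on $\Jpn-\Jpnd$ rather than a lower bound on $\Jpnd$, but the logical content is identical; your remark that only Assumption~\ref{a:reg}\ref{a:reg:feas} is actually needed here is also accurate.
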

		\begin{proof}
			For the proof we first introduce the dual program of \ref{primal-n-pert}:
			\begin{align} 
			\label{dual-n-pert} 
			\tag{$\Dual_n(\delta)$}
			\Jdnd \Let \left\{ \begin{array}{ll}
			\Sup{y} & \inner{b-\delta}{y} - \xnb\|\opn^* y-\cnew\|_{\Rnorm^*}  \\
			\st & y\in\cone^{*}.
			\end{array} \right.
			\end{align}
	We then have 
			\begin{align*}
			\Jpn - \Jpnd & = \Jdn - \Jpnd= \inner{b}{\yn} - \xnb\|\opn^* \yn-\cnew\|_{\Rnorm^*}  - \Jpnd \\ 
			&= \inner{\delta}{\yn} + \inner{b-\delta}{\yn}- \xnb\|\opn^* \yn-\cnew\|_{\Rnorm^*}  - \Jpnd \\
			&\leq \inner{\delta}{\yn} + \Jdnd - \Jpnd \leq \inner{\delta}{\yn},
			\end{align*}
			where the first line follows from the strong duality (gap-free) between \ref{primal-n} and \ref{dual-n} by Proposition \ref{prop:SD}. The third line is due to the fact that $\yn$ is a feasible solution of \ref{dual-n-pert}, and the last line follows from weak duality between \ref{primal-n-pert} and \ref{dual-n-pert}.
		\end{proof}
						
		\begin{Lem}[Perturbation error] 
			\label{lem:Lemma1}
			Let $\alpha\opt_N$ be an optimal solution of \ref{primal-n,N} and assume that $\delta\in\B$ satisfies the conic inequality $ \opn \alpha\opt_N \geqc{\cone} b - \delta$. Then, under Assumption \ref{a:reg}, we have $0\leq \Jpn - \JpnN \leq \inner{\delta}{\yn}$.
			\end{Lem}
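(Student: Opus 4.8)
The plan is to split the claim into a deterministic lower bound and an upper bound that is channeled through the perturbed program \ref{primal-n-pert} of Lemma~\ref{lem:Lemma0}.

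First I would establish the lower bound $0 \le \Jpn - \JpnN$ pathwise, i.e.\ for every realization of the samples. Since each sample $y_j$ lies in $\Kb \subset \cone^*$, any $\alpha$ feasible for \ref{primal-n} satisfies, by the reformulation \eqref{conic-const}, $\inner{\opn\alpha - b}{y} \ge 0$ for all $y \in \cone^*$, hence in particular $\alpha \bdot \opn^* y_j = \inner{\opn\alpha - b}{y_j} + \inner{b}{y_j} \ge \inner{b}{y_j}$ for each $j$, while keeping the same norm bound $\|\alpha\|_\Rnorm \le \xnb$. Thus the feasible set of \ref{primal-n} is contained in that of \ref{primal-n,N}, so \ref{primal-n,N} is a relaxation and $\JpnN \le \Jpn$ holds surely.

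For the upper bound, the key observation is that the optimizer $\alpha\opt_N$ of \ref{primal-n,N} is feasible for the perturbed program \ref{primal-n-pert}. Indeed, the hypothesis $\opn \alpha\opt_N \geqc{\cone} b - \delta$ is precisely the conic constraint of \ref{primal-n-pert}, and $\alpha\opt_N$ inherits the norm bound $\|\alpha\opt_N\|_\Rnorm \le \xnb$ from its feasibility in \ref{primal-n,N}. Consequently $\Jpnd \le \alpha\opt_N \bdot \cnew = \JpnN$. Combining this with the perturbation estimate $\Jpn - \Jpnd \le \inner{\delta}{\yn}$ from Lemma~\ref{lem:Lemma0} yields the chain $\Jpn - \JpnN \le \Jpn - \Jpnd \le \inner{\delta}{\yn}$, which is the desired bound.

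Since both ingredients are immediate, I do not anticipate a genuine obstacle; the only points requiring care are to confirm that $\alpha\opt_N$ satisfies the norm constraint of \ref{primal-n-pert} (so that it is truly feasible there, not merely conically feasible) and to observe that the lower bound holds for each realization rather than only in a probabilistic sense. The analytical weight is already carried by Lemma~\ref{lem:Lemma0}, whose nontrivial inequality $\Jpn - \Jpnd \le \inner{\delta}{\yn}$ rests on the strong duality of Proposition~\ref{prop:SD} and on the feasibility of $\yn$ for the perturbed dual \ref{dual-n-pert}.
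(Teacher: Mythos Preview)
Your proposal is correct and follows essentially the same approach as the paper: the lower bound is the relaxation argument, and the upper bound is obtained by noting that $\alpha\opt_N$ is feasible for \ref{primal-n-pert} (so $\Jpnd \le \JpnN$) and then invoking Lemma~\ref{lem:Lemma0}. The paper's proof is simply a terser version of what you wrote.
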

		\begin{proof}
			The lower bound on $\Jpn -\JpnN$ is trivial since \ref{primal-n,N} is a relaxation of \ref{primal-n}. For the upper bound the requirement on $\delta$ in the program \ref{primal-n-pert} implies that $\alpha\opt_N$ is a feasible solution of \ref{primal-n-pert}. We then have $\JpnN \geq \Jpnd$, and thus $0\leq \Jpn - \JpnN \leq \Jpn - \Jpnd$. Applying Lemma~\ref{lem:Lemma0} completes the proof.
		\end{proof}
		
		The following fact follows readily from Definition \ref{def:tail}. 
		
		\begin{Lem}[TB lower bound] \label{lem:ccp}
			If $\alpha \in \R^n$ satisfies $\PP\left[ y ~ : ~ \inner{\opn \alpha-b}{y} < 0 \right] \leq \eps$, then for any TB function in the sense of Definition~\ref{def:tail} we have $\supp{\Kb}{-\opn \alpha + b}\leq h(\alpha,\eps)$.
		\end{Lem}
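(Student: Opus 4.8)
The statement is, as the preceding remark indicates, a direct unwinding of Definition~\ref{def:tail}, and my plan is to evaluate the probability $p(\alpha,\cdot)$ at one judiciously chosen value of its second argument. Writing $s \Let \supp{\Kb}{-\opn\alpha+b}$ for the quantity we must bound, the goal is exactly to show $s \le h(\alpha,\eps)$. Since $h$ is a TB, the defining inequality $h(\alpha,\eps) \ge \sup\{\zeta : p(\alpha,\zeta) \le \eps\}$ already reduces the task to proving that $s$ itself is an admissible $\zeta$, i.e.\ that $p(\alpha,s) \le \eps$; then $s$ belongs to the set over which the supremum is taken, so the supremum, and hence $h(\alpha,\eps)$, is at least $s$.

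The key computation is to substitute $\zeta = s$ into the event defining $p$. By Definition~\ref{def:tail}, $p(\alpha,s) = \PP[y : s < \inner{-\opn\alpha+b}{y} + s] = \PP[y : \inner{-\opn\alpha+b}{y} > 0]$, where the shift by $s$ cancels exactly. Because the bilinear form is linear in its first argument, the event $\inner{-\opn\alpha+b}{y} > 0$ coincides with $\inner{\opn\alpha - b}{y} < 0$, so $p(\alpha,s) = \PP[y : \inner{\opn\alpha - b}{y} < 0]$, which is precisely the quantity the hypothesis bounds by $\eps$. Hence $p(\alpha,s) \le \eps$, and the argument of the previous paragraph yields $h(\alpha,\eps) \ge s$.

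I expect no genuine obstacle here, since the whole proof is a definition chase, but there are two bookkeeping points I would be careful about. First, I would verify that the strict/non-strict inequalities match across the two formulations: the strict ``$<$'' inside the definition of $p$ and the strict ``$<$'' in the hypothesis line up exactly after the cancellation of $s$, so no probability mass (and hence no $\eps$) is lost. Second, $p$ takes its second argument in $\R_+$, so the substitution $\zeta = s$ is only legitimate when $s \ge 0$; the complementary case $s < 0$ is handled separately and trivially, since $h$ is $\R_+$-valued and therefore $h(\alpha,\eps) \ge 0 > s$. Combining the two cases establishes $\supp{\Kb}{-\opn\alpha+b} \le h(\alpha,\eps)$ in full generality.
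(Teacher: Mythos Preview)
Your proof is correct and follows essentially the same route as the paper: substitute $\zeta = \supp{\Kb}{-\opn\alpha+b}$ into the definition of $p(\alpha,\cdot)$, observe that the resulting event is exactly $\inner{\opn\alpha-b}{y}<0$, and invoke the hypothesis together with the defining inequality for a TB. Your treatment is in fact slightly more careful than the paper's, since you explicitly address the domain restriction $\zeta\in\R_+$ by handling the trivial case $s<0$ separately.
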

		
		\begin{proof}
			By the definition of the support function we can equivalently write 
			\begin{align*}
			p(\alpha,\zeta)= \PP \left[ y ~ : ~ \inner{\opn \alpha - b }{y} < \zeta - \supp{\Kb}{-\opn \alpha + b} \right].
			\end{align*}
			Now setting $\zeta = \supp{\Kb}{-\opn \alpha + b}$ in the above relation together with the assumption of Lemma~\ref{lem:ccp} yields $p(\alpha,\zeta) \le \eps$, which in light of a TB in Definition \ref{def:tail} suggests that $\supp{\Kb}{-\opn \alpha + b}\leq h(\alpha,\eps)$. 
		\end{proof}

		We follow our discussion with a result from randomized optimization in a convex setting.

		\begin{Thm}[Finite-sample probabilistic feasibility {\cite[Theorem 1]{ref:CamGar-08}}] \label{thm:Campi}
			Assume that the program \ref{primal-n,N} admits a unique minimizer $\alpha\opt_N$.\footnote{The uniqueness assumption may be relaxed at the expense of solving an auxiliary convex program, see \cite[Section 3.3]{ref:MohSut-13}.} If $N \ge \NN(n,\eps,\beta)$ as defined in \eqref{N}, then with confidence at least $1 - \beta$ (across multi-scenarios $\{y_j\}_{j\le N}\subset \Kb$) we have $\PP \big[ y ~ : ~ \inner{\opn\alpha_N - b}{y} < 0  \big] \leq \eps$.
		\end{Thm}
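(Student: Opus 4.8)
The plan is to recognize this as the fundamental exact-feasibility guarantee of scenario optimization and to reconstruct the Calafiore--Campi/Campi--Garatti argument. Write $V(\alpha)\Let\PP\big[y~:~\inner{\opn\alpha-b}{y}<0\big]$ for the violation probability of a candidate $\alpha$, so that the assertion is equivalent to $\PP^N\big[V(\alpha\opt_N)>\eps\big]\le\beta$. I would prove the sharper statement $\PP^N\big[V(\alpha\opt_N)>\eps\big]\le\sum_{i=0}^{n-1}\binom{N}{i}\eps^i(1-\eps)^{N-i}$, and then invoke the monotonicity of the binomial tail in $N$ (the right-hand side equals $\PP[\mathrm{Bin}(N,\eps)\le n-1]$, which is non-increasing in $N$) together with the definition \eqref{N} to conclude that the right-hand side is $\le\beta$ for every $N\ge\NN(n,\eps,\beta)$. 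Note that \ref{primal-n,N} is a convex program in the $n$-dimensional variable $\alpha$, with a linear objective, $N$ i.i.d.\ random half-space constraints $\alpha\bdot\opn^*y_j\ge\inner{b}{y_j}$, and one deterministic convex constraint $\|\alpha\|_\Rnorm\le\xnb$; the latter is always present and does not increase the dimension $n$.

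\textbf{Support constraints.} Call a sampled constraint $j$ a \emph{support constraint} if deleting it strictly lowers the optimal value of \ref{primal-n,N}. The structural core is a Helly-type lemma: a convex program over $\R^n$ with a unique minimizer has at most $n$ support constraints. I would prove this by contradiction --- assuming $n+1$ support constraints and applying Helly's theorem to the family of sets obtained by deleting each support constraint in turn, producing a single feasible point strictly better than $\alpha\opt_N$ and contradicting optimality. The deterministic norm constraint is carried along unchanged; being non-random it can only reduce the number of support constraints among the \emph{sampled} ones, so the bound by $n$ is preserved.

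\textbf{Reduction and combinatorics.} Under the uniqueness hypothesis one shows that the \emph{fully supported} configuration, in which exactly $n$ of the sampled constraints are support constraints with probability one, is the worst case for $V(\alpha\opt_N)$; degenerate instances (fewer than $n$ support constraints, or ties) are dominated after a standard tie-breaking/perturbation argument, so it suffices to compute the bound in the fully supported case. There $\alpha\opt_N$ equals the minimizer $\alpha_I\opt$ based on the $n$ constraints indexed by a single subset $I\subset\{1,\dots,N\}$ of cardinality $n$, and $I$ is the support set precisely when $\alpha_I\opt$ satisfies the remaining $N-n$ constraints. Since the samples are i.i.d.\ and hence exchangeable, summing over subsets gives $\PP^N\big[V(\alpha\opt_N)>\eps\big]=\binom{N}{n}\,\PP^N\big[\{1,\dots,n\}\text{ is the support set and }V(\alpha_I\opt)>\eps\big]$. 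Conditioning on the first $n$ samples, which determine $\alpha_I\opt$ and thus $V(\alpha_I\opt)$, the remaining $N-n$ fresh samples are independent and each is satisfied by $\alpha_I\opt$ with probability $1-V(\alpha_I\opt)$, yielding $\binom{N}{n}\,\EE\big[\indic{\{V(\alpha_I\opt)>\eps\}}(1-V(\alpha_I\opt))^{N-n}\big]$. Carrying the same bookkeeping through all thresholds shows that $V(\alpha\opt_N)\sim\mathrm{Beta}(n,N-n+1)$ in the fully supported case, and the regularized incomplete-beta identity $\int_\eps^1 v^{n-1}(1-v)^{N-n}/\mathrm{B}(n,N-n+1)\,\diff v=\sum_{i=0}^{n-1}\binom{N}{i}\eps^i(1-\eps)^{N-i}$ produces exactly the desired right-hand side.

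The hardest part is the structural material underlying the reduction: rigorously establishing the Helly-type bound on the number of support constraints and justifying that the fully supported case dominates, including the careful treatment of degeneracies and ties that the uniqueness assumption is designed to streamline. Once these facts are in place, the exchangeability and conditioning computation in the last step is essentially bookkeeping, and the incomplete-beta identity is a classical special-function fact.
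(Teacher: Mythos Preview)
The paper does not prove this theorem; it is quoted verbatim from Campi--Garatti (2008) and invoked as a black box. Your proposal correctly reconstructs the standard scenario-optimization argument from that reference---the Helly-type bound on support constraints, the worst-case reduction to the fully supported regime, and the exchangeability/Beta-distribution computation---so it is essentially the proof the citation points to.
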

		
		We are now in a position to prove Theorem~\ref{thm:semi-fin:rand}.
		
		\begin{proof}[Proof of Theorem~\ref{thm:semi-fin:rand}]
			By definition of the support function we know that $\supp{\Kb}{\delta} = \supp{\conv(\Kb)}{\delta}$ where $\conv(\Kb)$ is the convex hull of $\Kb$. Recall that by definition of the set $\Kb$ in \eqref{conic-const}, we also have $y/\|y\|_* \in \conv(\Kb)$ for any $y \in \cone^*$. Thus, for any $\delta \in \B$ and $y \in \cone^*$ we have $\inner{\delta}{y} \le \|y\|_* \supp{\Kb}{\delta}$. This leads to 
			\begin{align*}
			0\leq \Jpn-\JpnN \leq \inner{-\opn\alpha\opt_N + b}{\yn} \leq \|\yn\|_{*} \supp{\Kb}{-\opn\alpha\opt_N + b} 
			\end{align*}
			where the second inequality is due to Lemma~\ref{lem:Lemma1} as $\delta=-\opn\alpha\opt_N + b$ clearly satisfies the requirements. By  Lemma~\ref{lem:ccp} and Theorem~\ref{thm:Campi}, we know that with probability at least $1-\beta$ we have $\supp{\Kb}{-\opn\alpha_N + b} \le h(\alpha_N,\eps)$, which in conjunction with the dual optimizer bound in Proposition~\ref{prop:SD} results in \eqref{eq:thm:semi-fin:rand:1}. Now using the TB candidate in Example \ref{ex:TB} immediately leads to the first assertion of  \eqref{eq:thm:semi-fin:rand:2}. Recall that the solution \ref{primal-n,N} obeys the norm bound $\|\alpha\opt_N\|_\Rnorm \le \xnb$. Thus, by employing the triangle inequality together with Lemma~\ref{lem:opn} we arrive at the second assertion \eqref{eq:thm:semi-fin:rand:2}. 
		\end{proof}
		
		Theorem~\ref{thm:semi-fin:rand} quantifies the approximation error between programs \ref{primal-n} and \ref{primal-n,N} probabilistically in terms of the TB functions as introduced in Definition \ref{def:tail}. The natural question is under what conditions can the proposed bound be made arbitrarily small. This question is intimately related to the behavior of TB functions. For the TB candidate proposed in Example \ref{ex:TB}, the question translates to when does the measure of a ball $\ball{\kappa}{\gamma} \subset \Kb$ have a lower bound $g(\gamma)$ uniformly away from $0$ with respect to the location of its center: The answer to this question also depends on the properties of the norm on $(\B,\Y,\|\cdot\|)$. A positive answer to this question requires that the set $\Kb$ can be covered by finitely many balls, indicating that $\Kb$ is indeed compact with respect to the (dual) norm topology. In the next subsection we study this requirement in more detail in the MDP setting.

		\subsection{Randomized results in the MDP setting} 
		\label{subsec:rand:MDP}
		We return to the MDP setting and discuss the implication of Theorem~\ref{thm:semi-fin:rand} as the bridge from the semi-infinite program \ref{primal-n} to the finite counterpart \ref{primal-n,N}. Recall the dual pairs of vector spaces setting in \eqref{AC:pairs} with the assigned norms \eqref{norm}. To construct the finite program \ref{primal-n,N}, we need to sample from the set of extreme points of $\Prob(K)$, i.e., the set of point measures
		\begin{align*}
		\Kb \Let \ext\big(\Prob(K)\big) = \big\{ \dir{(s,a)} : (s,a)\in K \big\},
		\end{align*}
		where $\dir{(s,a)}$ denotes a point probability distribution at $(s,a) \in K$. In this view, in order to sample elements from $\Kb$ it suffices to sample from the state-action feasible pairs $(s,a) \in K$. 
		
		\begin{Cor}[MDP finite randomized approximation error]
			\label{cor:adp:semi-finite}
			Let $\{(s_j,a_j) \}_{j\le N}$ be $N$ i.i.d.~samples generated from the uniform distribution on $K$. Consider the program 
			\begin{align}
			\label{AC-LP-n,N} 
			-\JacnN = & \left\{ \begin{array}{ll}
			\inf\limits_{(\rho, \alpha)\in\R^{n+1}} & -\rho   \\
			\st &\rho + \sum\limits_{i=1}^{n} \alpha_i\big(u_i(s_j) - Qu_i(s_j,a_j)\big) \leq \cost(s_j,a_j), \quad \forall j \in \{1,\cdots,N\}\\
			&  \| \alpha \|_{\Rnorm} \le \xnb.
			\end{array} \right. 
			\end{align}
			where the basis functions $\{u_i\}_{i \le n}$ introduced in \eqref{AC-LP-n} are normalized (i.e., $\|u_i\|_\lip = 1$). Let $L_Q$ be the Lipschitz constant from Assumption~\ref{a:CM}\ref{a:CM:Q}, and define the constant 
			\begin{align*}
			z_n \Let \big(\xnb \ratio (\max\{L_Q,1\}+ 1)+ \|\cost\|_\lip \big)^{-1},
			\end{align*}
			where $\ratio$ is the ratio constant introduced in \eqref{opt}. Then, for all $\eps, \beta$ in $(0,1)$ and $N \ge \NN\big(n+1,(z_n\eps)^{\dim(K)},\beta\big)$ defined in \eqref{N}, we have
			\begin{align*}
			\PP^N \Big[0\leq \JacnN - \Jacn \le   \eps \Big] \ge 1-\beta.
			\end{align*}
		\end{Cor}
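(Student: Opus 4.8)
The plan is to recognize \eqref{AC-LP-n,N} as the scenario counterpart \ref{primal-n,N} of the MDP instance \eqref{AC:pairs}--\eqref{norm}, and then to invoke the a~priori bound \eqref{eq:thm:semi-fin:rand:2} of Theorem~\ref{thm:semi-fin:rand} with the constants instantiated from the MDP-specific lemmas. First I would record the dictionary between the two formulations: the decision variable is the pair $(\rho,\alpha)\in\R^{n+1}$, so the relevant dimension entering \eqref{N} is $n+1$ rather than $n$, which is why the claim features $\NN(n+1,\cdot,\beta)$. The sign conventions of \eqref{AC-setting} give $\Jpn = -\Jacn$ and $\JpnN = -\JacnN$, whence $\Jpn - \JpnN = \JacnN - \Jacn$, so the event in \eqref{eq:thm:semi-fin:rand:2} translates verbatim into the claimed event $\{0\le\JacnN-\Jacn\le\eps\}$. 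Assumption~\ref{a:reg} has already been verified for \eqref{AC-LP-n} in Lemma~\ref{lem:MDP:bd-dual}, so the hypotheses of Theorem~\ref{thm:semi-fin:rand} are in force (with the standard tie-breaking device for the uniqueness requirement of Theorem~\ref{thm:Campi}).

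Next I would instantiate the constant $z_n = \big(\ynb(\xnb\ratio\|\op\| + \|b\|)\big)^{-1}$ from \eqref{NN}. Lemma~\ref{lem:MDP:bd-dual} gives $\ynb = 1$; the boundary datum is $b = -\cost$, so $\|b\| = \|\cost\|_\lip$; and the operator norm is supplied by Lemma~\ref{lem:operator} as $\|\op\| = \|I-Q\| \le 1 + \max\{L_Q,1\}$. A point that deserves care here is that the $\rho$-direction must not inflate the operator-norm factor: because every element of $\Kb$ is a probability measure, one has $\inner{\ind}{\kappa - y} = 0$ for all $\kappa,y\in\Kb$, so the constant term $-\rho\,\ind$ drops out of the difference $\inner{-\opn\alpha+b}{\kappa} - \inner{-\opn\alpha+b}{y}$ that drives the tail-bound estimate of Example~\ref{ex:TB}. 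Consequently only the Lipschitz seminorm of $\sum_i\alpha_i(Qu_i - u_i) + \cost$ is relevant, and since $\|(Q-I)u_i\|_\lip \le \|I-Q\| = 1+\max\{L_Q,1\}$ this is bounded by $\xnb\ratio(\max\{L_Q,1\}+1) + \|\cost\|_\lip$, which is exactly the reciprocal of the stated $z_n$.

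The heart of the argument, and the step I expect to be the main obstacle, is producing the lower-bound function $g$ of Example~\ref{ex:TB} for the measure $\PP$ induced on $\Kb = \{\dir{(s,a)} : (s,a)\in K\}$ by uniform sampling of $(s,a)$ on $K$. The key observation is an isometry between $(\Kb, \|\cdot\|_\wass)$ and $(K,\|\cdot\|_{\ell_\infty})$: since $\|\cdot\|_\wass$ is dual to the Lipschitz norm, for two Diracs one has $\|\dir{k} - \dir{k'}\|_\wass = \sup_{\|u\|_\lip\le1}\big(u(k)-u(k')\big) = \|k-k'\|_{\ell_\infty}$, the last equality using that $K=[0,1]^{\dim(K)}$ has $\ell_\infty$-diameter at most $1$ so that the Lipschitz bound is attained within the unit-sup-norm ball. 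Hence the Wasserstein ball $\ball{\dir{k}}{\gamma}$ meets $\Kb$ in the image of the $\ell_\infty$-ball $\{k'\in K : \|k'-k\|_{\ell_\infty} < \gamma\}$, whose $\PP$-measure equals its Lebesgue volume in the unit hypercube. I would then minimise this volume over the centre $k$: the minimiser is a corner of $[0,1]^{\dim(K)}$, where for $\gamma\le1$ the set reduces to a cube of side $\gamma$ of volume $\gamma^{\dim(K)}$. This yields the uniform lower bound $g(\gamma) = \gamma^{\dim(K)}$, and therefore $g(z_n\eps) = (z_n\eps)^{\dim(K)}$.

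Finally I would assemble the pieces. Substituting the dimension $n+1$ and the lower-bound function $g$ into \eqref{NN}, the sample-size requirement becomes $N \ge \NN\big(n+1, (z_n\eps)^{\dim(K)}, \beta\big)$, which is precisely the hypothesis of the corollary; the bound \eqref{eq:thm:semi-fin:rand:2} then delivers $\PP^N[0\le\Jpn-\JpnN\le\eps]\ge 1-\beta$, which is the claim after the sign translation of the first paragraph. The remaining obstacles are the two technical points flagged above, namely the geometric computation of $g$ (worst case at a corner of the hypercube) and the careful bookkeeping that keeps the $\rho$-direction from contributing to the operator-norm factor in $z_n$; the rest is a direct transcription of Theorem~\ref{thm:semi-fin:rand}.
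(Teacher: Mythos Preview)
Your proposal is correct and follows essentially the same route as the paper: identify \eqref{AC-LP-n,N} as the scenario program \ref{primal-n,N} in dimension $n+1$, instantiate the constants $\ynb=1$ (Lemma~\ref{lem:MDP:bd-dual}), $\|b\|=\|\cost\|_\lip$, $\|\op\|\le 1+\max\{L_Q,1\}$ (Lemma~\ref{lem:operator}), compute $g(\gamma)=\gamma^{\dim(K)}$ from the Dirac/$\ell_\infty$ isometry on the unit hypercube, and invoke \eqref{eq:thm:semi-fin:rand:2}. The only notable difference is your treatment of the $\rho$-direction: you observe that $\inner{\ind}{\kappa-y}=0$ for $\kappa,y\in\Kb$ so that only the Lipschitz \emph{seminorm} of $\sum_i\alpha_i(Q-I)u_i+\cost$ enters the tail bound, whereas the paper bounds the full Lipschitz norm $\|-\rho_N^\star+\cost\|_\lip$ directly and uses the nonnegativity of $\rho_N^\star$ and $\cost$ to reduce it to $\|\cost\|_\lip$; both arguments are valid and yield the same constant $z_n$.
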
	
		
		\begin{proof}
			Let $(\rho\opt_N,\alpha\opt_N)$ be the optimal solution for \eqref{AC-LP-n,N}. Observe that in the MDP setting, Assumption~\ref{a:CM}\ref{a:CM:Q} implies
			\begin{align}
			\|\opn\alpha\opt_N - b\| & = \Big\|-\rho\opt_N+ \sum_{i=1}^{n} \alpha\opt_{N(i)}(Q-I)u_i + \cost\Big\|_\lip \le (\max\{L_Q,1\}+1)\Big\|\sum_{i=1}^{n}\alpha\opt_{N(i)} u_i \Big\|_\lip + \|-\rho\opt_N+\cost\|_\lip \notag \\
			\label{Lg}  
			& \le (\max\{L_Q,1\}+1)\xnb \ratio \big(\max_{i\le n}\|u_i\|_\lip\big) + \|\cost\|_\lip,
			\end{align}
			where the equality $\|-\rho\opt_N+\cost\|_{\lip} = \|\cost\|_{\lip}$ leading to \eqref{Lg} follows from the fact that $\cost$ and $\rho\opt$ are non-negative (note that $\alpha = 0, \rho = 0$ is a trivial feasible solution for \eqref{AC-LP-n,N}). In the second step, we propose a TB candidate in the sense of Definition~\ref{def:tail}. Note that for any $k, k' \in K$, by the definition of the Wasserstein norm we have $\|\dir{\{k\}} - \dir{\{k'\}}\|_\wass = \min\{1,\|k - k'\|_\infty\}$. Thus, generating samples uniformly from $K$ leads to 
			\begin{align}
			\label{balls:AC}
			\PP\big[ \ball{\kappa}{\gamma} \big] \geq \PP\big[ \ball{k}{\gamma} \big] \ge {\gamma}^{\dim(K)}, \qquad \forall \kappa \in \Kb, \quad \forall k \in K,
			\end{align}
			where, with slight abuse of notation, the first ball $\ball{\kappa}{\gamma}$ is a subset of the infinite dimensional space $\Y$ with respect to the dual norm $\|\cdot\|_\wass$, while the second ball $\ball{k}{\gamma}$ is a subset of the finite dimensional space $K$ whose respective norm is $\|\cdot\|_\infty$. The relation \eqref{balls:AC} readily suggests a function $g:\R_+ \ra [0,1]$ for Example~\ref{ex:TB}, which together with \eqref{Lg} and the fact that the basis functions are normalized, it yields
			\begin{align*}
			h(\alpha,\eps) \Let \|\opn \alpha - b\| g^{-1}(\eps) &  \le \big(\xnb\ratio(\max\{L_Q,1\}+1) + \|\cost\|_\lip \big)\eps^{{1/\dim K}}.
			\end{align*}			
			Recall from Lemma~\ref{lem:MDP:bd-dual} that the dual multiplier bound is $\ynb = 1$, and feasible solutions $\alpha$ is bounded by $\xnb$. Finally, note that the decision variable of the program \eqref{AC-LP-n,N} is the $n+1$ dimensional pair $(\rho,\alpha)$. Given all the information above, the claim then readily follows from the second result of Theorem~\ref{thm:semi-fin:rand} in \eqref{eq:thm:semi-fin:rand:2}.
		\end{proof}
				
To select $\xnb$, one may minimize the complexity of the a priori bound in Corollary~\ref{cor:adp:semi-finite}, which is reflected through the required number of samples. At the same time, the impact of the bound $\xnb$ on the approximation step from infinite to semi-infinite in Corollary~\ref{cor:MDP:inf-semi} should also be taken into account. The first factor is monotonically decreasing with respect to $\xnb$, i.e., the smaller the parameter $\xnb$, the lower the number of the required samples. The second factor is presented through the projection residual (cf. Remark~\ref{rem:projection}). Therefore, an acceptable choice of $\xnb$ is an upper bound for the projection error of the optimal solution onto the ball $\Uball$ uniformly in $n \in \N$, i.e., 
\begin{subequations}
	\label{theta*:MDP}
	\begin{align}
	\label{theta*:MDP1}
	\xnb \ge \sup\bigg\{\|\alpha\opt\|_\Rnorm ~:~ \proj_{\Uball}(x\opt) = \sum_{i=1}^{n} \alpha\opt_iu_i, \quad n \in \N \bigg\}. 
	\end{align}
The above bound may be available in particular cases, e.g., when $\|\cdot\|_\Rnorm = \|\cdot\|_{\ell_2}$ it yields the bound
	\begin{align}
	\label{theta*:MDP2}
	\|\alpha\opt\|_{\ell_2} = \sqrt{\int_{S} {u\opt}^2(s) \diff s} \le \|u\opt\|_\lip \le \max\{L_Q,1\}\|\cost\|_\infty, 
	\end{align} 
\end{subequations}	
where $L_Q$ is the Lipschitz constant in Assumptions~\ref{a:CM}\ref{a:CM:Q}. We note that the first inequality in \eqref{theta*:MDP2} follows since $S$ is a unit hypercube, and the second inequality follows from \cite[Lemma~2.3]{ref:Dufour-15}, see also \cite[Section~5]{ref:Dufour-15} for further detailed analysis.


		\section{Semi-infinite to Finite Program: Structural convex optimization} 
		\label{sec:semi-fin:smoothing}
		
		This section approaches the approximation of the semi-infinite program \ref{primal-n} from an alternative perspective relying on an iterative first order decent method. As opposed to the scenario approach presented in Section~\ref{sec:semi-fin:rand}, that is probabilistic and starts from the program \ref{primal-n}, the method of this section is deterministic and starts with the dual counterpart \ref{dual-n}, in particular a {\em regularized} version of whose solutions can be computed efficiently. It turns out that the regularized solution allows one to reconstruct a nearly feasible solution for both programs \ref{primal-n} and \ref{dual-n}, offering a meaningful performance bound for the approximation step from the semi-infinite program to a finite program.  
		
		\subsection{Structural convex optimization}
		\label{subsec:smooth}
		
		The basis of our approach is the fast gradient method that significantly improves the theoretical and, in many cases, also the practical convergence speed of the gradient method. The main idea is based on a well known technique of smoothing nonsmooth functions \cite{ref:Nest-05}. To simplify the notation, for a given $\xnb$ we define the sets
		\begin{align*}
		\Ab \Let \big\{ \alpha\in\R^{n} :  \| \alpha \|_\Rnorm \leq \xnb \big\}, \qquad
		\Yb \Let \bigg\{ y\in\cone^{*} : \|y\|_* \leq \ynb \bigg\},
		\end{align*}
		where $\ynb$ is the constant defined in \eqref{yb}. Recall that in the wake of Proposition~\ref{prop:SD} we know that the decision variables of the dual program \ref{dual-n} may be restricted to the set $\Yb$ without loss of generality. We modify the program \ref{dual-n} with a regularization term scaled with the non-negative parameter $\eta$ and define the \emph{regularized} program
		\begin{align}  \label{dual-n-eta}
		\tag{$\Dual_{n,\eta}$}
		\Jneta \Let \Sup{y\in\Yb} 	\Big\{  \inner{b}{y} - \xnb\|\opn^* y-\cnew\|_{\Rnorm^*}   - \eta d(y) \Big\}, 		
		\end{align}
		where the regularization function $d: \Yb \ra \R_{+}$, also known as the {\em prox-function}, is strongly convex. The choice of the prox-function depends on the specific problem structure and may have significant impact on the approximation errors. Given the regularization term $\eta$ and the parameter $\alpha \in \R^n$, we introduce the auxiliary quantity
		\begin{align} \label{yeta}
		\yeta(\alpha)\Let \arg\max_{y\in\Yb} \Big\{  \inner{b-\opn \alpha }{y}  - \eta d(y) \Big\}.
		\end{align}
		It is computationally crucial for the solution method proposed in this part that the prox-function allows us to have access to the auxiliary variable $\yeta(\alpha)$ for each $\alpha \in \R^n$. This requirement is formalized as follows.
		
		\begin{As}[Lipschitz gradient]
			\label{a:yeta}
			Consider the adjoint operator $\opn^*$ in \eqref{Ln} and the optimizer $\yeta(\alpha)$ of the auxiliary quantity \eqref{yeta}. We assume that for each $\alpha \in \Ab$ the vector $\opn^*\yeta(\alpha) \in \R^n$ can be approximated to an arbitrary precision, and the mapping $\alpha \mapsto \opn^*\yeta(\alpha)$ is Lipschitz continuous with a constant $\tfrac{L}{\eta}$, i.e.,  
			\begin{align*}
			\|\opn^*\yeta(\alpha) - \opn^*\yeta(\alpha')\|_{\Rnorm^*} \le {L \over \eta} \|\alpha - \alpha'\|_\Rnorm, \qquad \forall  \alpha, \alpha' \in \Ab. 
			\end{align*}
		\end{As}
		
		Let $\vartheta > 0 $ be the strong convexity parameter of the mapping $\alpha \mapsto \tfrac{1}{2}\|\alpha\|_\Rnorm^2$ with respect to the $\Rnorm$-norm. We then define the operator $\T :\R^n\times\R^n \ra \R^n$ as
		\begin{align}
		\label{T}
		\T(q,\alpha) \Let \arg\min_{\beta \in \Ab} \Big\{q \bdot \beta + {1 \over 2\vartheta} \|\beta - \alpha\|^2_\Rnorm \Big \}, 
		\end{align}
		More generally, a different norm can be used in the second term in \eqref{T} when $\vartheta$ is a different strong convexity parameter. However, we forgo this additional generality to keep the exposition simple. The operator $\T$ is defined implicitly through a finite convex optimization program whose computational complexity may depend on the $\Rnorm$-norm through the constraint set $\Ab$. For typical norms in $\R^n$ (e.g., $\|\cdot\|_{\ell_p}$) the pointwise evaluation of the operator $\T$ is computationally tractable. Furthermore, if $\|\cdot\|_\Rnorm = \|\cdot\|_{\ell_2}$, then the definition of \eqref{T} has an explicit analytical description for any pair $(q,\alpha)$ as follows.
		
		\begin{Lem}[Explicit description of {$\T$}]
			\label{lem:T}
			Suppose in the definition of the operator \eqref{T} the $\Rnorm$-norm is the classical $\ell_2$-norm. Then, the operator $\T$ admits the analytical description $\T(q,\alpha) = \xi\, (\alpha - q)$ where $\xi \Let \min\big\{1,\xnb\|q-\alpha\|_{\ell_2}^{-1}\big\}$.
		\end{Lem}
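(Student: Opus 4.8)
The plan is to solve the defining minimization in \eqref{T} in closed form by recognizing it as a Euclidean projection onto a ball. First I would record that when $\|\cdot\|_\Rnorm = \|\cdot\|_{\ell_2}$ the map $\alpha \mapsto \tfrac12\|\alpha\|_{\ell_2}^2$ is $1$-strongly convex, so the strong convexity parameter is $\vartheta = 1$ and the prox factor $\tfrac{1}{2\vartheta}$ in \eqref{T} collapses to $\tfrac12$. The objective then reads $q\bdot\beta + \tfrac12\|\beta - \alpha\|_{\ell_2}^2$, and completing the square in $\beta$ turns it, up to a $\beta$-independent constant, into $\tfrac12\|\beta - (\alpha - q)\|_{\ell_2}^2$. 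Hence $\T(q,\alpha) = \proj_{\Ab}(\alpha - q)$, the Euclidean projection of $\alpha - q$ onto the centered ball $\Ab = \{\beta \in \R^n : \|\beta\|_{\ell_2}\le\xnb\}$.

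Next I would evaluate this projection explicitly. The projection onto a centered $\ell_2$-ball is a radial clip: if $\|\alpha - q\|_{\ell_2} \le \xnb$ the unconstrained optimizer $\alpha - q$ is already feasible and is returned unchanged; otherwise the minimizer lies on the sphere $\|\beta\|_{\ell_2} = \xnb$ and, by rotational symmetry (or a one-line Lagrange/KKT argument), equals $\xnb(\alpha - q)\|\alpha - q\|_{\ell_2}^{-1}$. Writing the common scaling factor as $\xi = \min\{1, \xnb\|q-\alpha\|_{\ell_2}^{-1}\}$, and using $\|q-\alpha\|_{\ell_2} = \|\alpha - q\|_{\ell_2}$, unifies both regimes into $\T(q,\alpha) = \xi(\alpha - q)$, which is the claimed identity.

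I do not anticipate a genuine obstacle, as the result is a standard projection formula. The only two points deserving care are, first, confirming that $\vartheta = 1$ for the $\ell_2$ norm so that the unconstrained stationary point is exactly $\alpha - q$ rather than $\alpha - \vartheta q$; and second, checking that the two regimes of the projection glue together into the single $\min$ expression defining $\xi$. Both follow from elementary arguments together with the explicit form of the Euclidean projector onto a centered ball.
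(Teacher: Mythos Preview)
Your argument is correct. You complete the square to recognize $\T(q,\alpha)$ as the Euclidean projection of $\alpha - q$ onto the centered $\ell_2$-ball of radius $\xnb$, and then invoke the standard radial-clip formula. The paper takes a slightly different route: it dualizes the constraint $\|\beta\|_{\ell_2}^2 \le \xnb^2$ with a Lagrange multiplier $\lambda \ge 0$, solves the resulting unconstrained quadratic in $\beta$ to obtain $\beta^\star(\lambda) = (\alpha - q)/(1+2\lambda)$, finds the optimal multiplier $\lambda^\star = \tfrac{1}{2\xnb}\max\{\|\alpha - q\|_{\ell_2} - \xnb, 0\}$ by maximizing the dual, and then substitutes back. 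Your approach is more direct and avoids the explicit dual computation; the paper's Lagrangian derivation is self-contained in that it does not appeal to the projection formula as a known fact but rederives it. Either way the content is the same elementary result.
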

		
		\begin{proof}
			In case of the $\ell_2$-norm the strong convexity parameter is $\vartheta = 1$. Now using the classical duality theory, the objective function of \eqref{T} is equal to
			\begin{align}
			\label{pd}
			\min_{\|\beta\|_{\ell_2} \le \xnb } \Big\{q \bdot \beta + {1 \over 2} \|\beta - \alpha\|^2_{\ell_2}\Big\} & = \max_{\lambda \ge 0} \Big\{-\lambda\xnb^2 + \min_\beta \big\{ q \bdot \beta + {1 \over 2} \|\beta - \alpha\|^2_{\ell_2} + \lambda \|\beta\|^2_{\ell_2}\big\}\Big\} \\
			&\notag = \max_{\lambda \ge 0} \Big\{ - \lambda\xnb^2 + {q \bdot \alpha -\|q\|^2_{\ell_2} \over 1 + 2\lambda} + {\|q+2\lambda\alpha\|^2_{\ell_2} \over 2(1+2\lambda)^2} + {\lambda \|\alpha - q\|^2_{\ell_2} \over (1+2\lambda)^2} \Big\},
			\end{align}
			where the second equality in \eqref{pd} follows by substituting the explicit solution of the unconstrained inner problem described by
			\begin{align}
			\label{beta}
			\beta\opt(\lambda) \Let \arg\min_\beta \big\{ q \bdot \beta + {1 \over 2} \|\beta - \alpha\|^2_{\ell_2} + \lambda \|\beta\|^2_{\ell_2}\big\} = {\alpha - q \over 1 + 2\lambda}.
			\end{align}
			To find the optimal $\lambda$ in the right-hand side of \eqref{pd}, it suffices to set the derivative to zero, which yields $\lambda\opt \Let \tfrac{1}{2\xnb}\max\{\|\alpha - q\| - \xnb, 0\}$. By substituting $\lambda\opt$ in \eqref{beta}, we have an optimal solution $\beta\opt(\lambda\opt) = \xi(\alpha - q)$ that is feasible since $\|\beta\opt(\lambda\opt)\|_{\ell_2} \le \xnb$. By virtue of the equality in \eqref{pd}, this concludes the desired assertion.
		\end{proof}
		
		Algorithm~\ref{alg} exploits the information revealed under Assumption~\ref{a:yeta} as well as the operator $\T$ to approximate the solution of the program \ref{dual-n}. The following proposition provides explicit error bounds for the solution provided by Algorithm \ref{alg} after $k$ iterations. The result is a slight extension of the classical smoothing technique in finite dimensional convex optimization \cite[Theorem 3]{ref:Nest-05} where the prox-function is not necessarily uniformly bounded, a potential difficulty in infinite dimensional spaces. We address this difficulty by considering a growth rate for the prox-function $d$ evaluated at the optimal solution $\yeta$. We later show how this extension will help in the MDP setting. 
		
		\begin{algorithm}[t!]
			\caption{Optimal scheme for smooth convex optimization}
			\label{alg}
			\begin{algorithmic} 
				\State Choose some $w^{(0)} \in \Ab$
				\State {For $k \ge 0$ do}
				\begin{enumerate} [label=$(\roman*)$, itemsep = 1mm, topsep = 1mm, leftmargin = 2cm]
					
					\item [{\bf Step 1:}] Define $r^{(k)} \Let \frac{\eta}{L} \big(\cnew - \opn^*\yeta(w^{(k)})\big)$;
					
					\item [{\bf Step 2:}] Compute  $z^{(k)} \Let \T\big(\sum_{j=0}^{k} \frac{j+1}{2} r^{(j)}, 0\big), \quad \alpha^{(k)} \Let \T\big({1 \over \vartheta}r^{(k)}, w^{(k)}\big)$;
					
					\item [{\bf Step 3:}] Set $w^{(k+1)}=\frac{2}{k+3} z^{(k)} + \frac{k+1}{k+3} \alpha^{(k)}$.
				\end{enumerate}
			\end{algorithmic}
		\end{algorithm}

		\begin{Thm}[Smoothing approximation error] 
			\label{thm:semi-fin:smooth}
			Suppose Assumption \ref{a:yeta} holds with constant $L$ and $\vartheta$ is the strong convexity parameter in the definition of the operator $\T$ in \eqref{T}. Given the regularization term $\eta > 0$ and $k$ iterations of Algorithm \ref{alg}, we define 
			\begin{align*}
			\wh{\alpha}_\eta \Let \alpha^{(k)}, \qquad \wh{y}_\eta \Let \sum_{j=0}^{k}\frac{2(j+1)}{(k+1)(k+2)} \yeta(w^{(j)}).
			\end{align*}
			Under Assumption~\ref{a:reg}, the optimal value of the program \ref{primal-n} is bounded by $\Jnlb \le \Jpn \le \Jnub$ where 
			\begin{align}
			\label{Jn-algo}
			\Jnlb \Let \inner{b}{\wh{y}_\eta} - \xnb \|\opn^*\wh{y}_\eta -\cnew\|_{\Rnorm^*}, \qquad \Jnub \Let \wh{\alpha}_\eta \bdot \cnew + \sup_{y \in \Yb} \inner{b-\opn \wh{\alpha}_\eta}{y}	
			\end{align} 
			Moreover, suppose there exist positive constants $c, C$ such that
			$$C \max\big\{\log\big(c\eta^{-1}\big),1\big\} \ge d\big(\yeta(\alpha)\big), \qquad \forall \eta > 0, \quad \forall \alpha \in \Ab,$$
			and, given an a~priori precision $\eps>0$, the regularization parameter $\eta$ and the number of iterations $k$ satisfy 
			\begin{align}
			\label{eta-k}
			\eta \le {\eps \over 2C\max\{2\log(2cC\eps^{-1}),1\}}, \qquad k \ge 2\xnb \ratio \frac{\sqrt{CL\max\{2\log(2cC\eps^{-1}),1\}}}{\sqrt{\vartheta}~\eps}, 
			\end{align} 
			where $\ratio$ is the constant defined in \eqref{opt}. Then, after $k$ iterations of Algorithm~\ref{alg} we have $\Jnub - \Jnlb \le \eps$. 
		\end{Thm}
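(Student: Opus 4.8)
The plan is to establish the two displayed inequalities first and the $\eps$-accuracy bound separately. Introduce the bilinear saddle function $\phi(\alpha,y)\Let\alpha\bdot\cnew+\inner{b-\opn\alpha}{y}$ and observe that minimizing the linear map $\alpha\mapsto\alpha\bdot(\cnew-\opn^*y)$ over $\Ab$ returns $-\xnb\|\opn^*y-\cnew\|_{\Rnorm^*}$, so that $\Jnlb=\min_{\alpha\in\Ab}\phi(\alpha,\wh{y}_\eta)$, while by construction $\Jnub=\sup_{y\in\Yb}\phi(\wh{\alpha}_\eta,y)$. Proposition~\ref{prop:SD} shows that $\Jpn=\Jdn$ is unchanged when the dual feasible set is intersected with $\{\|y\|_*\le\ynb\}$, hence $\Jpn=\sup_{y\in\Yb}\min_{\alpha\in\Ab}\phi(\alpha,y)$. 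Since each $\yeta(w^{(j)})$ lies in the convex set $\Yb$, so does the average $\wh{y}_\eta$, giving $\Jnlb=\min_{\alpha\in\Ab}\phi(\alpha,\wh{y}_\eta)\le\Jpn$; dually, $\wh{\alpha}_\eta=\alpha^{(k)}\in\Ab$ together with the elementary max--min inequality yields $\Jpn=\sup_{y\in\Yb}\min_{\alpha\in\Ab}\phi\le\min_{\alpha\in\Ab}\sup_{y\in\Yb}\phi\le\sup_{y\in\Yb}\phi(\wh{\alpha}_\eta,y)=\Jnub$. This proves $\Jnlb\le\Jpn\le\Jnub$ for every $\eta>0$ and $k$, using only weak duality and the memberships $\wh{y}_\eta\in\Yb$, $\wh{\alpha}_\eta\in\Ab$.

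For the accuracy bound write $f(\alpha)\Let\sup_{y\in\Yb}\phi(\alpha,y)$, its smoothed version $f_\eta(\alpha)\Let\sup_{y\in\Yb}\{\phi(\alpha,y)-\eta d(y)\}$ with maximizer $\yeta(\alpha)$, and $g(y)\Let\min_{\alpha\in\Ab}\phi(\alpha,y)$, so that $\Jnub=f(\wh{\alpha}_\eta)$ and $\Jnlb=g(\wh{y}_\eta)$. Under Assumption~\ref{a:yeta} the map $\nabla f_\eta(\alpha)=\cnew-\opn^*\yeta(\alpha)$ is Lipschitz with constant $L/\eta$ in the $(\Rnorm,\Rnorm^*)$ pairing, and Algorithm~\ref{alg} is exactly Nesterov's optimal scheme \cite{ref:Nest-05} minimizing $f_\eta$ over $\Ab$ with the $\vartheta$-strongly convex prox-term $\tfrac12\|\cdot\|_\Rnorm^2$ appearing in $\T$. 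Writing $a_j=\tfrac{j+1}{2}$, $A_k=\sum_{j=0}^k a_j=\tfrac{(k+1)(k+2)}{4}$ and $\ell_j(\alpha)=f_\eta(w^{(j)})+\nabla f_\eta(w^{(j)})\bdot(\alpha-w^{(j)})$, the scheme satisfies the estimate-sequence inequality $A_k f_\eta(\wh{\alpha}_\eta)\le\min_{\alpha\in\Ab}\big\{\sum_{j=0}^k a_j\ell_j(\alpha)+\tfrac{L}{\eta\vartheta}\cdot\tfrac12\|\alpha\|_\Rnorm^2\big\}$. A direct computation identifies $\ell_j(\alpha)=\phi(\alpha,\yeta(w^{(j)}))-\eta d(\yeta(w^{(j)}))$; using that $\phi(\alpha,\cdot)$ is affine, that $d\ge0$, and Jensen's inequality one gets $\sum_j a_j\ell_j(\alpha)\le A_k\phi(\alpha,\wh{y}_\eta)$, and minimizing over $\alpha\in\Ab$ yields
\begin{align*}
f_\eta(\wh{\alpha}_\eta)-g(\wh{y}_\eta)\le\frac{L\,\max_{\alpha\in\Ab}\tfrac12\|\alpha\|_\Rnorm^2}{\eta\,\vartheta\,A_k}=\frac{2L\,\xnb^2}{\eta\,\vartheta\,(k+1)(k+2)},
\end{align*}
where the norm-equivalence constant $\ratio$ of Lemma~\ref{lem:opn} enters once the abstract $L$ is made explicit through $\|\op\|$ and $\|\opn\|\le\ratio\|\op\|$.

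It remains to bound the smoothing bias $f(\wh{\alpha}_\eta)-f_\eta(\wh{\alpha}_\eta)$, and here lies the main obstacle: because $d$ need not be bounded on $\Yb$ (as for the entropy prox-term), the classical estimate $\eta\sup_{y\in\Yb}d(y)$ is unavailable. The remedy I would use is to differentiate in the smoothing parameter. Fixing $\alpha=\wh{\alpha}_\eta$ and setting $\Phi(t)\Let\sup_{y\in\Yb}\{\inner{b-\opn\alpha}{y}-t\,d(y)\}$, the function $\Phi$ is convex and nonincreasing, and by Danskin's theorem $\Phi'(t)=-d(y^\star_t(\alpha))$, where $y^\star_t(\alpha)$ is the maximizer of \eqref{yeta} with $\eta$ replaced by $t$. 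Consequently
\begin{align*}
f(\wh{\alpha}_\eta)-f_\eta(\wh{\alpha}_\eta)=\Phi(0)-\Phi(\eta)=\int_0^\eta d\big(y^\star_t(\wh{\alpha}_\eta)\big)\,\diff t\le C\int_0^\eta\max\{\log(c/t),1\}\,\diff t\le 2C\eta\max\{\log(c/\eta),1\},
\end{align*}
where the first inequality invokes the growth hypothesis pointwise in $t\in(0,\eta]$. Thus, although $d$ may blow up toward the boundary of $\Yb$, its integral against $\diff t$ stays of order $\eta\log(1/\eta)$ --- which is precisely what the growth condition is designed to control, and what makes this extension of \cite{ref:Nest-05} possible.

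Finally, I would add the two estimates to obtain $\Jnub-\Jnlb\le 2C\eta\max\{\log(c/\eta),1\}+\tfrac{2L\xnb^2}{\eta\vartheta(k+1)(k+2)}$ (with $\ratio$ reinstated as above). The choice of $\eta$ in \eqref{eta-k} drives the first term below $\eps/2$ --- the factor $2$ inside $2\log(2cC\eps^{-1})$ absorbing the $\log\log$ correction incurred when $\eta$ is substituted into $\log(c/\eta)$ --- while the stated lower bound on $k$ drives the accelerated term below $\eps/2$; their sum is at most $\eps$, which establishes $\Jnub-\Jnlb\le\eps$ and completes the proof.
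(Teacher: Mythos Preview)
Your argument for the sandwich $\Jnlb\le\Jpn\le\Jnub$ is correct and matches the paper: both rely only on $\wh{y}_\eta\in\Yb$, $\wh{\alpha}_\eta\in\Ab$, and the zero duality gap from Proposition~\ref{prop:SD}.

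For the accuracy bound, your route is \emph{genuinely different} from the paper's. The paper does not split $f(\wh{\alpha}_\eta)-g(\wh{y}_\eta)$ into a smoothing bias plus a smoothed gap; instead it invokes the proof of \cite[Theorem~3]{ref:Nest-05} directly to obtain
\[
\Jnub-\Jnlb\le \frac{L\|\alpha^*\|_\Rnorm^2}{\vartheta(k+1)(k+2)\eta}+\eta\,d\big(\yeta(\alpha^*)\big),
\]
where $\alpha^*$ is the minimizer of the smoothed objective $f_\eta$. The bias term thus appears as $\eta\,d(\yeta(\alpha^*))$ evaluated at a \emph{single} point, and the growth hypothesis is applied once, at the fixed value $\eta$. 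Your Danskin-based integration $\int_0^\eta d(y^\star_t)\,\diff t$ is a self-contained alternative that uses the growth hypothesis at \emph{every} $t\in(0,\eta]$; it is correct and arguably more transparent about why a merely logarithmically growing prox-term is integrable, but it costs a factor of~$2$: you obtain $2C\eta\max\{\log(c/\eta),1\}$ instead of $C\eta\max\{\log(c/\eta),1\}$. With the $\eta$ prescribed in~\eqref{eta-k} the paper's bias term is $\le\eps/2$, whereas yours is only $\le\eps$, so your final sum is $\le 3\eps/2$ rather than $\le\eps$. The argument is structurally sound; only the constants in~\eqref{eta-k} would need to be tightened by a fixed factor to match the stated conclusion.

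One further point: your explanation of how $\ratio$ enters is not what happens in the paper. The constant $L$ is given abstractly by Assumption~\ref{a:yeta}, and $\ratio$ does not come from ``making $L$ explicit through $\|\op\|$''. Rather, the paper bounds the primal prox-radius by $(\xnb\ratio)^2$ in the accelerated term, which is what produces the factor $\xnb\ratio$ in the iteration bound of~\eqref{eta-k}. Your estimate-sequence inequality with $\max_{\alpha\in\Ab}\tfrac12\|\alpha\|_\Rnorm^2=\tfrac12\xnb^2$ would yield the slightly sharper $k\ge 2\xnb\sqrt{CL\max\{\ldots\}}/(\sqrt{\vartheta}\,\eps)$, without $\ratio$.
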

		
		\begin{proof} 
			Observe that the bounds $\Jnlb$ and $\Jnub$ in \eqref{Jn-algo} are the values of the programs \ref{dual-n} and \ref{primal-n} evaluated at $\wh{y}_\eta$ and $\wh \alpha_\eta$, respectively. As such, the first assertion follows immediately. Towards the second part, thanks to the compactness of the set $\Ab$, the strong duality argument of Sion's minimax theorem \cite{ref:Sion-58} allows to describe the program \ref{dual-n-eta} through
			\begin{align} 
			\notag
			\Jneta & \Let \sup_{y \in \Yb} \inner{b}{y} - \Big[ \sup_{\alpha \in \Ab} \inner{\opn \alpha}{y} - \alpha \bdot \cnew + \eta d(y)\Big] \\
			\notag 
			& = \inf_{\alpha \in \Ab} \alpha \bdot \cnew + \sup_{y\in\Yb}  \Big[ \inner{b - \opn \alpha}{y}  - \eta d(y) \Big]\\
			\label{eq:dual:sion}	
			& = \inf_{\alpha \in \Ab} \alpha \bdot \cnew + \inner{b - \opn \alpha}{\yeta(\alpha)}  - \eta d\big(\yeta(\alpha)\big), 
			\end{align}
			where the last equality follows from the definition in \eqref{yeta}. Note that the problem \eqref{eq:dual:sion} belongs to the class of smooth and strongly convex optimization problems, and can be solved using a fast gradient method developed by \cite{ref:Nest-05}. For this purpose, we define the function 
			\begin{align} 
			\label{eq:objective:smoothing}
			\phi_{\eta}(\alpha) &\Let \alpha \bdot \cnew + \inner{b - \opn \alpha}{\yeta(\alpha)}  - \eta d\big(\yeta(\alpha)\big).
			\end{align}
			Invoking similar techniques to \cite[Theorem~1]{ref:Nest-05}, it can be shown that the mapping $\alpha \mapsto \phi_\eta(\alpha)$ is smooth with the gradient $\nabla \phi_{\eta}(\alpha) = \cnew - \opn^* \yeta(\alpha)$. The gradient $\nabla \phi_\eta(\alpha)$ is Lipschitz continuous by Assumption \ref{a:yeta} with constant $\tfrac{L}{\eta}$. Thus, following similar arguments as in the proof of \cite[Theorem~3]{ref:Nest-05} we have
			\begin{align}
			\label{eq:bound}
			0\le \Jnub - \Jnlb \le {L\|\alpha\opt\|^2_\Rnorm \over \vartheta (k+1)(k+2)\eta} + \eta d\big(\yeta(\alpha^*)\big) \le {L(\xnb \ratio)^2\over \vartheta k^2\eta} + C\eta\max\big\{\log(c\eta^{-1}),1\big\}.
			\end{align}
			Now, it is enough to bound each of the terms in the right-hand side of the above inequality by $\tfrac{1}{2}\eps$. It should be noted that this may not lead to an optimal choice of the parameter $\eta$, but it is good enough to achieve a reasonable precision order with respect to $\eps$. To ensure $\eta\log(\eta^{-1}) \le \eps$ for an $\eps \in (0,1)$ , it is not difficult to see that it suffices to set $\eta \le \tfrac{\eps}{2\log (\eps^{-1})}$. In this observation if we replace $\eta$ and $\eps$ with $\tfrac{1}{c}\eta$ and $\tfrac{1}{2cC}\eps$, respectively, we deduce that the second term on the right-hand side in \eqref{eq:bound} bounded by $\tfrac{1}{2}\eps$. Thus, the desired assertion follows by equating the first term on the right-hand side in \eqref{eq:bound} to $\tfrac{1}{2}\eps$ while the parameter $\eta$ is set as just suggested. 
		\end{proof}
		
		\begin{Rem}[Computational complexity]
			\label{rem:complex}
			Adding the prox-function to the problem~\ref{dual-n} ensures that the regularized counterpart \ref{dual-n-eta} admits an efficiency estimate (in terms of iteration numbers) of the order $\order\big(\sqrt{\tfrac{L}{\eta}\eps^{-1}}\big)$. To construct a smooth $\eps$-approximation for the original problem \ref{dual-n}, the Lipschitz constant $\tfrac{L}{\eta}$ can be chosen of the order $\order({\eps^{-1}}{\log(\eps^{-1})})$. Thus, the presented gradient scheme has an efficiency estimate of the order $\order\big({\eps^{-1}}{\sqrt{\log(\eps^{-1})}}\big)$, see \cite{ref:Nest-05} for a more detailed discussion along similar objective.	
		\end{Rem}
		
		\begin{Rem}[Inexact gradient]
			\label{ref:inexact}
			The error bounds in Theorem~\ref{thm:semi-fin:smooth} are introduced based on the availability of the exact first-order information, i.e., it is assumed that at each iteration the vector $r^{(k)}$ that due to the bilinear form potentially involves a multi dimensional integration can be computed exactly. In general, the evaluation of those vectors may only be available approximately. This gives rise to the question of how the fast gradient method performs in the case of \emph{inexact} first-order information. We refer the interested reader to \cite{ref:Devolver-13} for further details.
		\end{Rem}
		
		The a priori bound proposed by Theorem~\ref{thm:semi-fin:smooth} involves the positive constants $c, C$, which are used to introduce an upper bound for the proxy-term. These constants potentially depend on $\ynb$, the size of the dual feasible set, hence also on $\xnb$. Therefore, unlike the randomized approach in Section~\ref{sec:semi-fin:rand}, it is not immediately clear how $\xnb$ can be chosen to minimize the complexity of the proposed method, which in this case is the required number of iterations $k$ suggested in \eqref{eta-k} (cf. Remark~\ref{rem:theta}). In the next section, we shall discuss how to address this issue in the MDP setting for particular constants $c,C$.
		
\subsection{Structural convex optimization results in the MDP setting}  
\label{subsec:smooth:MDP}
To link the approximation method presented in Section~\ref{subsec:smooth} to the AC program in \eqref{AC-LP-n}, let us recall the dual pairs \eqref{AC:pairs} equipped with the norms \eqref{norm}. To simplify the analysis, we refine the assertion in Lemma~\ref{lem:MDP:bd-dual} and argue that the dual optimizers are indeed probability measures, i.e.,
	\begin{align}
	\label{Y}
	\Yb \Let \bigg\{ y \in \Meas_+(K) \ : \|y\|_{\wass} = \ynb = 1 \bigg\}.
	\end{align}
To see this, one can consider the norm $\|(\rho,\alpha)\| \Let \|\alpha\|_{\Rnorm}$ and follow similar arguments in the proof of Proposition~\ref{prop:SD}. Strictly speaking, this is not a true norm on $\R^{n+1}$ but it does not affect the technical argument, in particular strong duality between \ref{primal-n} and \ref{dual-n}. The details are omitted here in the interest of space. We consider the prox-function as a relative entropy defined by
		\begin{align} 
		\label{d}
		d(y) \Let \left\{ 
		\begin{array}{cr} 
		\inner{\log\big(\frac{\diff y}{\diff\Leb}\big)}{y} & y \ll \Leb \\
		\infty & \text{o.w.},
		\end{array}\right.
		\end{align}
		where $\Leb$ is the uniform measure supported on the set $K$ and $\tfrac{\diff y}{\diff\Leb} \in \Func_+(K)$ is the Radon-Nikodym derivative between two measures $y$ and $\Leb$. One can inspect that the prox-function \eqref{d} is indeed a non-negative function. The optimizer of the regularized program \ref{dual-n-eta} for the AC program~\eqref{AC-LP-n} is
		\begin{align}
		\label{yeta:AC}
		\yeta(\rho,\alpha) \Let \arg\max_{y \in \Yb} \bigg\{ \inner{-\cost + \rho - \sum_{i=1}^{n}\alpha_i(Q-I)u_i}{y} - \eta \inner{\log\big(\tfrac{\diff y}{\diff\Leb}\big)}{y} \bigg\}.
		\end{align}
		To see \eqref{yeta:AC}, check \eqref{yeta} together with the definitions of the operator $\opn$ in \eqref{Ln} and the AC problem parameters in \eqref{AC-setting}. The main reason for such a choice of the regularization term is the fact that the optimizer of the regularized program \eqref{yeta:AC} admits an analytical expression:
		
		\begin{Lem}[Entropy maximization {\cite{ref:Csiszar-75}}] 
			\label{lem:entropy}
			Given a (measurable)  function $g:K\ra \R$ and the set $\Yb \subset \Meas_+(K)$ as defined in \eqref{Y} we have
			\begin{align*}
			y^\star(\diff k) \Let \arg\max_{y \in \Yb} \Big\{\inner{g}{y} - \eta d(y) \Big\} = \frac{\exp\big(\eta^{-1}g(k) \big) \Leb(\diff k)}{\inner{\exp\big(\eta^{-1}g(k)\big)}{\Leb}}.
			\end{align*}
		\end{Lem}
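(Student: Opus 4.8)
The plan is to recognize the stated identity as the \emph{Gibbs variational principle} and to prove it by a completion-to-relative-entropy argument, which is rigorous in this infinite-dimensional setting, rather than by the formal Lagrangian first-order conditions that a naive calculus-of-variations computation would suggest.

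First I would simplify the feasible set $\Yb$. Since every $y \in \Yb$ is a \emph{positive} measure, testing the Wasserstein norm against the admissible function $\ind \equiv 1$ (which has $\|\ind\|_\lip = 1$) shows $\|y\|_\wass = y(K)$; indeed, on the cone $\Meas_+(K)$ the Wasserstein and total-variation norms coincide, as already noted in the text. Hence the constraint $\|y\|_\wass = 1$ is precisely $y(K) = 1$, so $\Yb = \Prob(K)$. Moreover, any $y$ that is not absolutely continuous with respect to $\Leb$ yields $d(y) = +\infty$ and therefore objective value $-\infty$, so the supremum may be taken over $y \in \Prob(K)$ with $y \ll \Leb$; I then work throughout with densities $f \Let \diff y / \diff\Leb \in \Func_+(K)$.

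Next I would introduce the candidate maximizer $y^\star$ with density $f^\star \Let e^{\eta^{-1} g}/Z$, where $Z \Let \inner{\exp(\eta^{-1} g)}{\Leb}$; this is well defined as a probability measure provided $Z \in (0,\infty)$, which holds in particular whenever $g$ is bounded (as in the compact MDP setting, where $K$ is a hypercube and $g$ is continuous). The crux is an exact identity obtained by \emph{completing the square} through the relative entropy: using $\log f^\star = \eta^{-1} g - \log Z$ together with $\inner{\ind}{y} = y(K) = 1$, a direct expansion of $\RelEnt{y}{y^\star} = \inner{\log(f/f^\star)}{y}$ gives
\begin{align*}
\inner{g}{y} - \eta\, d(y) = \eta \log Z - \eta\, \RelEnt{y}{y^\star}.
\end{align*}
In this identity the term $\eta \log Z$ is independent of $y$, so maximizing the objective is equivalent to minimizing $\RelEnt{y}{y^\star}$.

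Finally I would invoke the nonnegativity of relative entropy (Gibbs' inequality, a consequence of Jensen's inequality applied to the strictly convex map $t \mapsto t \log t$), namely $\RelEnt{y}{y^\star} \ge 0$ with equality if and only if $y = y^\star$ ($\Leb$-almost everywhere). Substituting this into the identity shows the objective is maximized uniquely at $y = y^\star$, with optimal value $\eta \log Z$, exactly the claimed form. The step I expect to require the most care is analytic rather than algebraic: justifying that $\RelEnt{y}{y^\star}$ is well defined in $[0,+\infty]$ and that the integral $\inner{\log f^\star}{y}$ splits as used --- i.e.\ that $\inner{g}{y}$ is finite --- which is where an integrability/boundedness hypothesis on $g$ (automatic on compact $K$ for continuous $g$) enters, together with the usual convention $0\log 0 = 0$ on the null set of $f$.
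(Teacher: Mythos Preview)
Your proof is correct. The paper does not actually prove this lemma --- it is stated with a bare citation to Csisz\'ar (1975) and used as a black box --- so there is no ``paper's proof'' to compare against; you have supplied a clean self-contained argument where the authors deferred to the literature. The completion-to-relative-entropy identity $\inner{g}{y} - \eta\,d(y) = \eta\log Z - \eta\,\RelEnt{y}{y^\star}$ together with Gibbs' inequality is the standard route, and your reduction of the constraint $\|y\|_\wass = 1$ on $\Meas_+(K)$ to $y(K)=1$ (hence $\Yb = \Prob(K)$) is exactly the observation the paper makes in passing when introducing \eqref{Y}. The integrability caveat you flag is also the right one: in the surrounding MDP application $g$ is Lipschitz on the compact hypercube $K$, so boundedness of $g$ and finiteness of $Z$ are automatic.
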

		
		Thanks to Lemma \ref{lem:entropy}, the analytical description of the dual optimizer in \eqref{yeta:AC} is readily available by setting
		\begin{align}
		\label{g}
		g(k) \Let [b-\opn \alpha](k) = -\cost(k) + \rho - \sum_{i=1}^n \alpha_i (Q-I)u_i(k).
		\end{align}		
		The last requirement to implement Algorithm~\ref{alg} is to verify Assumption~\ref{a:yeta}, i.e., we need to compute the Lipschitz constant of the mapping $(\rho,\alpha) \mapsto \opn^*\yeta(\rho,\alpha)$  in which the respective norm is $\|(\rho,\alpha)\| \Let \|\alpha\|_\Rnorm$. By definition of the adjoint operator $\opn^*$ in \eqref{Ln}, it is not difficult to observe that 
		\begin{align} 
		\label{Lny}
		\opn^* \yeta (\rho,\alpha) = 
		\left[ \begin{array}{c}
		\inner{-\ind}{\yeta (\rho,\alpha)} \\ \inner{(Q-I)u_1}{\yeta (\rho,\alpha)} \\ \vdots \\ \inner{(Q-I)u_n}{\yeta (\rho,\alpha)}
		\end{array} \right] 
		= 
		\left[ \begin{array}{c}
		-1\\ \inner{(Q-I)u_1}{\yeta (\rho,\alpha)} \\ \vdots \\ \inner{(Q-I)u_n}{\yeta (\rho,\alpha)}
		\end{array} \right].
		\end{align}
		The next lemma addresses the requirement of Assumption~\ref{a:yeta} for the mapping \eqref{Lny}. 
		
		\begin{Lem}[Lipschitz constant in MDP]
			\label{lem:Lip:AC}
			Consider the entropy maximizers in Lemma~\ref{lem:entropy} with $g$ as defined in \eqref{g} and the adjoint operator in \eqref{Lny}. An upper bound for the Lipschitz constant in Assumption~\ref{a:yeta} is $L \le 4\ratio^2$ where the constant $\ratio$ is the equivalence ratio between the norms $\|\cdot\|_\Rnorm$ and $\|\cdot\|_{\ell_1}$ introduced in \eqref{opt}. 
		\end{Lem}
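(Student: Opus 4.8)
The plan is to identify $\opn^*\yeta(\rho,\alpha)$ explicitly, differentiate it in $\alpha$, and bound the operator norm of the resulting Jacobian as a map from $(\R^n,\|\cdot\|_\Rnorm)$ to $(\R^n,\|\cdot\|_{\Rnorm^*})$. The first observation is that, by \eqref{g}, the function $g$ contains $\rho$ only as the additive constant $\rho$, which upon exponentiation in Lemma~\ref{lem:entropy} factors out of both the numerator and the normalizing denominator and therefore cancels. Hence $\yeta(\rho,\alpha)$ is independent of $\rho$, the first coordinate of \eqref{Lny} is the constant $-1$, and it suffices to establish the Lipschitz estimate for the map $\alpha\mapsto F(\alpha)\Let\big[\inner{(Q-I)u_1}{\yeta(\alpha)},\dots,\inner{(Q-I)u_n}{\yeta(\alpha)}\big]$; this is also why the degenerate ``norm'' $\|(\rho,\alpha)\|\Let\|\alpha\|_\Rnorm$ causes no trouble.

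Writing $v_i\Let(Q-I)u_i$ and recalling from Lemma~\ref{lem:entropy} that $\yeta(\alpha)(\diff k)\propto\exp\!\big(\eta^{-1}g(k)\big)\Leb(\diff k)$ is a Gibbs measure, I would differentiate $F_i(\alpha)=\inner{v_i}{\yeta(\alpha)}$ using the standard exponential-family identity. Since $\partial g/\partial\alpha_j=-v_j$, this yields the Jacobian
\begin{align*}
\frac{\partial F_i}{\partial\alpha_j}(\alpha)=-\frac1\eta\,\mathrm{Cov}_{\yeta(\alpha)}\big(v_i,v_j\big),
\end{align*}
i.e.\ $DF(\alpha)=-\tfrac1\eta\Sigma(\alpha)$ with $\Sigma(\alpha)$ the covariance matrix of $(v_1,\dots,v_n)$ under the probability measure $\yeta(\alpha)$. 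The operator norm relevant to Assumption~\ref{a:yeta} is $\|DF(\alpha)\|_{\Rnorm\to\Rnorm^*}=\tfrac1\eta\sup_{\|e\|_\Rnorm\le1,\,\|d\|_\Rnorm\le1}e\bdot\Sigma(\alpha)d$, and since $\Sigma(\alpha)$ is symmetric positive semidefinite, Cauchy--Schwarz for the associated bilinear form collapses this to
\begin{align*}
\|DF(\alpha)\|_{\Rnorm\to\Rnorm^*}=\frac1\eta\sup_{\|e\|_\Rnorm\le1}\mathrm{Var}_{\yeta(\alpha)}\Big(\sum_{i=1}^n e_i v_i\Big).
\end{align*}

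It then remains to bound this variance uniformly in $\alpha$. Because $Q$ is a stochastic kernel and $\|u_i\|_\lip=1$ forces $\|u_i\|_\infty\le1$, the triangle inequality gives $\|v_i\|_\infty=\|Qu_i-u_i\|_\infty\le2$, so $\big\|\sum_i e_i v_i\big\|_\infty\le2\|e\|_{\ell_1}\le2\ratio\|e\|_\Rnorm$ by the definition of $\ratio$ in \eqref{opt}. Bounding the variance of a bounded random variable by its second moment, $\mathrm{Var}_{\yeta(\alpha)}(\sum_i e_i v_i)\le\big\|\sum_i e_i v_i\big\|_\infty^2\le4\ratio^2$ whenever $\|e\|_\Rnorm\le1$; hence $\|DF(\alpha)\|_{\Rnorm\to\Rnorm^*}\le 4\ratio^2/\eta$ for every $\alpha\in\Ab$. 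Finally, since $\Ab$ is convex I would integrate the Jacobian along the segment joining $\alpha$ and $\alpha'$ and use the dual-norm characterization of $\|\cdot\|_{\Rnorm^*}$ to obtain $\|F(\alpha)-F(\alpha')\|_{\Rnorm^*}\le(4\ratio^2/\eta)\|\alpha-\alpha'\|_\Rnorm$, which is precisely Assumption~\ref{a:yeta} with $L\le4\ratio^2$.

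The main obstacle I anticipate is rigorously justifying the differentiation under the integral sign and the exponential-family covariance identity, i.e.\ interchanging $\partial/\partial\alpha_j$ with the integrals defining $F_i(\alpha)$ and the partition function. This is routine but not automatic; it can be discharged by dominated convergence, exploiting that $K$ is compact and that all the relevant integrands ($v_i$, their products, and $\exp(\eta^{-1}g)$) are continuous and hence uniformly bounded on $K$, with bounds locally uniform in $\alpha\in\Ab$. A reader wishing to avoid differentiability altogether could instead estimate $\inner{v_i}{\yeta(\alpha)}-\inner{v_i}{\yeta(\alpha')}$ directly via a log-partition (soft-max) Lipschitz bound, but the covariance route delivers the stated constant $4\ratio^2$ most transparently.
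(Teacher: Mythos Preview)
Your proposal is correct and follows essentially the same route as the paper: both differentiate $\opn^*\yeta$ in $\alpha$, recognise the Jacobian as $-\tfrac{1}{\eta}$ times the covariance matrix of $(v_1,\dots,v_n)$ under the Gibbs measure, and bound its $\Rnorm\to\Rnorm^*$ operator norm using $\|v_i\|_\infty\le 2$ and the definition of $\ratio$. The only cosmetic difference is that the paper first bounds each Jacobian entry by $4/\eta$ and then performs a chain of dual-norm manipulations, whereas you pass directly to the variance of $\sum_i e_i v_i$ via Cauchy--Schwarz on the PSD bilinear form, which is a cleaner way to arrive at the same constant $4\ratio^2$.
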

		
		\begin{proof}
			It is straightforward to see that \eqref{Lny} is differentiable with respect to the variable $(\rho, \alpha)$. Hence, it suffices to bound the norm of the matrix $\nabla \opn^* \yeta (\rho,\alpha) \in \R^{(n+1)\times(n+1)}$ uniformly on $(\rho,\alpha)$. Further, as the first element of the vector \eqref{Lny} is the constant 1, it only requires ro consider the gradient function with respect to the variable $\alpha \in \R^n$. A direct computation yields 
			\begin{align*}
			|\big(\nabla_\alpha   \opn^* \yeta(\rho,\alpha) \big)_{ij} | &= \bigg| \frac{1}{\eta} \inner{(Q-I)u_i(Q-I)u_j}{\yeta(\rho,\alpha)} - \frac{1}{\eta}\inner{(Q-I)u_i}{\yeta(\rho,\alpha)} \inner{(Q-I)u_j}{\yeta(\rho,\alpha)} \bigg|\\
			& = {4 \over \eta } \bigg | \inner{{(Q-I)u_i \over 2}{(Q-I)u_j \over 2}}{\yeta(\rho,\alpha)} - \inner{{(Q-I)u_i \over 2}}{\yeta(\rho,\alpha)} \inner{{(Q-I)u_j \over 2}}{\yeta(\rho,\alpha)} \bigg| \\
			& \le  {4 \over \eta}, \qquad  \forall i,j \in \{1,\cdots,n\}.
			\end{align*}
			where the last inequality is a consequence of the Cauchy-Schwarz inequality and the fact that $\|(Q-I)u_j\|_\infty \le 2$ (recall that $Q$ is a stochastic kernel and all the basis functions are normalized). The Lipschitz constant of the desired mapping can then be upper bounded by
			\begin{align} 
			\label{grad-norm}
			{L \over \eta} & \leq \sup_{\tiny \begin{array}{cc} \|\alpha\|_\Rnorm \le \xnb \\ \|v\|_\Rnorm \le 1 \end{array}} \big\| \nabla_\alpha  \opn^* \yeta (\rho,\alpha) v \big\|_{\Rnorm*} 
			\le \sup_{\tiny \begin{array}{cc} \|\Phi_i\|_{\ell_\infty} \le 1 \\ \|v\|_\Rnorm \le 1 \end{array}}   {4 \over \eta} \Big \| (\Phi_1 \bdot v, \cdots, \Phi_n \bdot v) \Big \|_{\Rnorm^*}
			\end{align}	
			Recall that by the definition of the dual norm, we have $|\Phi_i \bdot v| \le \|\Phi_i\|_{\Rnorm^*}$ for all $\|v\|_{\Rnorm}\le 1$. Thus, substituting the scalar variable $\mu_i \Let \Phi_i \bdot v$ in the right-hand side of \eqref{grad-norm} and eliminating the factor $\eta$ lead to
			\begin{align*}
			L & \le \sup_{\|\Phi_i\|_{\ell_\infty} \le 1}  ~ \sup_{|\mu_i| \le \|\Phi_i\|_{\Rnorm^*}} {4} \big \| (\mu_1, \cdots, \mu_n) \big \|_{\Rnorm^*} = \sup_{|\mu_i| \le \ratio} {4} \big \| (\mu_1, \cdots, \mu_n)\big \|_{\Rnorm^*},
			\end{align*}
			where the last statement follows from the definition of the dual norm, and in particular the equality $$\sup_{\|\Phi_i\|_{\ell_\infty}\le 1} \|\Phi_i\|_{\Rnorm^*} = \sup_{\|\Phi_i\|_{\Rnorm}\le 1} \|\Phi_i\|_{\ell_1} =: \ratio.$$ Thus, using the same equality yields
			\[ L \le \sup_{\|\mu\|_{\ell_\infty} \le 1} {4 \ratio} \|\mu \|_{\Rnorm^*} = 4\ratio^2, \]
			which concludes the first desired assertion. 
		\end{proof}
		
		The performance of Algorithm~\ref{alg} can now be characterized through the following corollary. 
		
		\begin{Cor}[MDP smoothing approximation error]
			\label{cor:adp:smooth}
			Consider the operator \eqref{Ln} with the parameters described in \eqref{AC-setting} for the semi-infinite AC program \eqref{AC-LP-n}. Given this setting and the Lipschitz constant in Lemma~\ref{lem:Lip:AC}, we run Algorithm~\ref{alg} for $k$ iterations using the entropy function \eqref{d} with analytical solution \eqref{yeta:AC} as the prox-function. We define the constants 
			\begin{align*}
			C_1 \Let 2\e \big(\ratio \xnb (\max\{L_Q,1\}+1) + \|\cost\|_\lip\big), \qquad 
			C_2 \Let 4 \xnb \ratio^2 \sqrt{2\dim(K) \over \vartheta}.  
			\end{align*}
			For every $\eps \le C_1$ we set the smoothing factor $\eta$ and the number of iterations $k$ by
			\begin{align*}
			\eta \le {\eps \over 4 \dim(K)\log(C_1\eps^{-1})}, \qquad k \ge C_2 \frac{\sqrt{\log(C_1\eps^{-1})}}{\eps}.
			\end{align*} 
			Then, the outcome of Algorithm~\ref{alg} as defined in \eqref{Jn-algo} is an $\eps$ approximation of the optimal value $\Jacn$ in the sense of Theorem~\ref{thm:semi-fin:smooth}.
		\end{Cor}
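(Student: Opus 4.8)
The plan is to obtain Corollary~\ref{cor:adp:smooth} as a direct specialization of Theorem~\ref{thm:semi-fin:smooth} to the AC program \eqref{AC-LP-n}, so the real work lies in verifying the theorem's hypotheses and then matching constants. Three ingredients are already in place: Assumption~\ref{a:reg} holds for \eqref{AC-LP-n} with dual bound $\ynb = 1$ by Lemma~\ref{lem:MDP:bd-dual}; the dual feasible set may be taken to be the probability measures \eqref{Y}; and Assumption~\ref{a:yeta} holds for the entropic prox-function with Lipschitz constant $L \le 4\ratio^2$ by Lemma~\ref{lem:Lip:AC}. The only remaining hypothesis of Theorem~\ref{thm:semi-fin:smooth} is the logarithmic growth estimate $d(\yeta(\rho,\alpha)) \le C\max\{\log(c\eta^{-1}),1\}$; establishing it with the correct constants $c$ and $C$ is the crux.

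To produce this estimate I would insert the explicit Gibbs optimizer from Lemma~\ref{lem:entropy} into the entropy \eqref{d}. Writing $g \Let b - \opn\alpha = -\cost + \rho - \sum_i \alpha_i(Q-I)u_i$ and $Z \Let \inner{\exp(\eta^{-1}g)}{\Leb}$, a short computation gives $d(\yeta(\rho,\alpha)) = \eta^{-1}\inner{g}{\yeta(\rho,\alpha)} - \log Z \le \eta^{-1}\gmax - \log Z$, since $\yeta(\rho,\alpha)$ is a probability measure and $g \le \gmax \Let \max_{k\in K} g(k)$. The whole point is then to lower bound the partition function $Z$. Here I would use that $g$ is Lipschitz with constant at most $G \Let \ratio\xnb(\max\{L_Q,1\}+1) + \|\cost\|_\lip$, which follows from $\|(Q-I)u_i\|_\lip \le 1+\max\{L_Q,1\}$ (Lemma~\ref{lem:operator}), the normalization $\|u_i\|_\lip = 1$, and $\|\alpha\|_{\ell_1} \le \ratio\xnb$. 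Consequently $g(k) \ge \gmax - G\|k-\wh k\|_{\ell_\infty}$ around a maximizer $\wh k$, and integrating the resulting exponential over the cube $K = [0,1]^{\dim(K)}$ against the uniform measure $\Leb$ yields $Z \ge \e^{\gmax/\eta}\int_K \e^{-(G/\eta)\|k-\wh k\|_{\ell_\infty}}\Leb(\diff k)$. Evaluating the $\ell_\infty$-ball integral exactly as a (truncated) Gamma integral and invoking Stirling's bound $m! \ge (m/\e)^m$ gives, for $\eta$ in the relevant small range, $Z \ge \e^{\gmax/\eta}\big(\dim(K)\,\eta/(\e G)\big)^{\dim(K)}$, whence $d(\yeta(\rho,\alpha)) \le \dim(K)\,\log\big(\e G/(\dim(K)\eta)\big)$. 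This is precisely the required bound with $C = \dim(K)$ and $c = \e G/\dim(K)$, and the large-$\eta$ regime is absorbed by the $\max\{\cdot,1\}$ since $d \ge 0$.

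It then remains to feed these constants into condition \eqref{eta-k}. Observe $2cC = 2\e G = C_1$, so $2\log(2cC\eps^{-1}) = 2\log(C_1\eps^{-1})$, and the hypothesis $\eps \le C_1$ places us in the regime where this logarithmic term dominates the constant $1$ in the maximum. Substituting $C = \dim(K)$ into the $\eta$-bound of \eqref{eta-k} gives $\eta \le \eps/(4\dim(K)\log(C_1\eps^{-1}))$, and substituting $C = \dim(K)$ together with $L = 4\ratio^2$ into the $k$-bound gives $k \ge 4\xnb\ratio^2\sqrt{2\dim(K)/\vartheta}\,\sqrt{\log(C_1\eps^{-1})}/\eps = C_2\sqrt{\log(C_1\eps^{-1})}/\eps$; both coincide with the statement. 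Theorem~\ref{thm:semi-fin:smooth} then guarantees $\Jnub - \Jnlb \le \eps$, i.e. the asserted $\eps$-approximation of $\Jacn$.

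The main obstacle I anticipate is exactly the partition-function lower bound, namely showing that the entropic prox-term grows only like $\dim(K)\log(\eta^{-1})$ rather than like $\eta^{-1}$. Everything hinges on exploiting the Lipschitz continuity of $g$ on the compact cube so that the Gibbs measure $\yeta$ cannot concentrate faster than on scale $\sim \eta/G$ per coordinate; it is precisely the $\ell_\infty$ geometry of $K$ together with Stirling's estimate that delivers the clean constant $c = \e G/\dim(K)$ matching $C_1 = 2\e G$.
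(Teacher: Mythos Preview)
Your proposal is correct and follows essentially the same route as the paper: verify Assumption~\ref{a:reg} and Assumption~\ref{a:yeta} via Lemmas~\ref{lem:MDP:bd-dual} and~\ref{lem:Lip:AC}, establish the logarithmic growth bound $d(\yeta)\le C\max\{\log(c\eta^{-1}),1\}$ with $C=\dim(K)$ and $c=\e G/\dim(K)$ where $G=\ratio\xnb(\max\{L_Q,1\}+1)+\|\cost\|_\lip$, and then substitute into \eqref{eta-k}. The paper packages the entropy estimate as Lemma~\ref{lem:entropy:bound}, itself built on the partition-function lower bound of Lemma~\ref{lem:exp}; that lemma uses a sublevel-set argument---bound the measure of $\{k:\gmax-g(k)<\delta\}$ by $(\delta/L_g)^{\dim(K)}$ via Lipschitz continuity and optimize over $\delta$---whereas you sketch a direct computation of the $\ell_\infty$-ball integral. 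Once made precise (restricting the integral to $r\le \min\{1,\dim(K)\eta/G\}$), your computation is exactly the paper's with the optimal threshold $\delta=\dim(K)\eta$ already inserted, so Stirling is not actually needed and the two arguments coincide.

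One small caution: the claim that ``$\eps\le C_1$ places us in the regime where the logarithmic term dominates the constant~$1$'' is not literally true (that would require $\eps\le C_1\e^{-1/2}$), but the paper makes the same tacit simplification, so your matching of constants is faithful to the statement as written.
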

		
		Corollary~\ref{cor:adp:smooth} requires one to compute the constants $c, C$ to quantify the a~priori bounds. The following two technical lemmas provide supplementary materials to address this issue.
		
		\begin{Lem}
			\label{lem:exp}
			Let $K \subseteq [0, 1]^m$ and $g: K \ra \R$ be a Lipschitz continuous function with constant $L_g > 0$ (with respect to the $\ell_\infty$-norm) and the maximum value $\gmax \Let \max_{k \in K} g(k)$. Then, for every $\eta > 0$ we have 
			\begin{align*}
			\int_{K} \exp\Big({\eta^{-1}\big(g(k) - \gmax\big)}\Big)\diff k \ge \min\Big\{\Big({m \eta \over L_g}\Big)^m, 1\Big\} \exp \big( - \min\big\{m, L_g  \eta^{-1}\big\} \big).
			\end{align*}
		\end{Lem}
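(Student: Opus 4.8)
The plan is to turn the integral into an elementary one–dimensional optimization by combining the Lipschitz lower bound on $g$ with the box geometry of the $\ell_\infty$-norm. First I would fix a maximizer $k\opt \in K$ of $g$, which exists because $g$ is continuous (being Lipschitz) and $K$ is compact, so that $g(k\opt) = \gmax$. The Lipschitz estimate then yields the pointwise bound $g(k) - \gmax = g(k) - g(k\opt) \ge -L_g\|k - k\opt\|_{\ell_\infty}$, and hence $\exp\!\big(\eta^{-1}(g(k)-\gmax)\big) \ge \exp\!\big(-\eta^{-1}L_g\|k-k\opt\|_{\ell_\infty}\big)$ for every $k \in K$. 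This replaces the unknown $g$ by a function depending only on the distance to $k\opt$.

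Next I would discard all of $K$ except a small box around $k\opt$. For $r \in (0,1]$ set $B_r \Let \{k \in K : \|k - k\opt\|_{\ell_\infty} \le r\}$; on $B_r$ the integrand is at least $\exp(-\eta^{-1}L_g r)$, so $\int_K \exp(\eta^{-1}(g-\gmax))\diff k \ge \exp(-\eta^{-1}L_g r)\,\Leb(B_r)$. The key geometric input is the volume estimate $\Leb(B_r) \ge r^m$ for $r \le 1$: since $K$ equals the unit hypercube $[0,1]^m$, the set $B_r$ is the product $\prod_{i=1}^{m}\big([k\opt_i - r, k\opt_i + r]\cap[0,1]\big)$, whose $i$-th factor has length $\min\{k\opt_i, r\} + \min\{1 - k\opt_i, r\}$, and an elementary case distinction shows this is at least $r$ whenever $r \le 1$, uniformly in the position of $k\opt$. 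Multiplying the $m$ coordinate lengths gives $\Leb(B_r)\ge r^m$, and therefore $\int_K \exp(\eta^{-1}(g-\gmax))\diff k \ge r^m\exp(-\eta^{-1}L_g r)$ for every $r \in (0,1]$.

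Finally I would optimize this family of bounds over $r$. Writing $a \Let L_g\eta^{-1}$, the concave map $r \mapsto m\log r - a r$ has unconstrained maximizer $r_0 = m/a$; when $a \ge m$ we have $r_0 \le 1$ and the best bound is $(m/a)^m\exp(-m)$, while when $a < m$ the objective increases on $(0,1]$ and the best bound is attained at $r=1$, giving $\exp(-a)$. These two regimes combine precisely into $\min\{(m/a)^m, 1\}\exp(-\min\{m,a\})$, which is the claimed inequality after substituting $a = L_g\eta^{-1}$ and $m/a = m\eta/L_g$. The only genuinely delicate step is the volume bound $\Leb(B_r)\ge r^m$ when $k\opt$ lies near a face of the cube, where $B_r$ is truncated and one must check that the surviving coordinate interval retains length at least $r$; the rest is the Lipschitz bound followed by the one–dimensional optimization, whose two branches reproduce the two arguments of each $\min$.
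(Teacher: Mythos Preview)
Your proof is correct and follows essentially the same route as the paper: both restrict to a sublevel set near the maximizer, use the Lipschitz bound to get a volume estimate of the form $r^m$ (equivalently $(\delta/L_g)^m$ via $\delta = L_g r$), and then optimize the resulting one-parameter lower bound, recovering the two branches of the $\min$. Your treatment of the boundary case for the volume bound is in fact more explicit than the paper's; note that both arguments implicitly use $K=[0,1]^m$ rather than merely $K\subseteq[0,1]^m$, which is the situation in the intended application.
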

		
		\begin{proof}
			Let us define the set $Z(\delta) \Let \{ k \in K : \gmax - g(k) < \delta\}$. Thanks to the Lipschitz continuity of the function $g$, we have $\gmax - g(k) \le L_g \|k\opt - k\|_{\ell_\infty}$ where $g(k\opt) = \gmax$. Thus, using this inequality one can bound the size of the set $Z(\delta)$ in the sense of 
			$$\int_{Z(\delta)} \diff k \ge \min\{(\delta L_g^{-1})^{m}, 1\}, \qquad \forall \delta \ge 0.$$
			By virtue of the above result, one can observe that for every $\delta > 0$
			\begin{align*}
			\int_{K} \exp\Big({\eta^{-1}\big(g(k) - \gmax\big)}\Big)\diff k & \ge \int_{Z(\delta)} \exp\Big({\eta^{-1}\big(g(k) - \gmax \big)}\Big)\diff k \\
			& \ge \exp(-\eta^{-1}\delta) \int_{Z(\delta)} \diff k \ge \exp(-\eta^{-1}\delta)\min\{(\delta L_g^{-1})^{m},1\}.
			\end{align*}
			Maximizing the right-hand side of the above inequality over $\delta$ suggests to set $\delta = \min\{m \eta, L_g\}$, which yields the desired assertion.
		\end{proof}
		
		In light of Lemma~\ref{lem:exp}, we can bound the entropy prox-function \eqref{d} evaluated at the optimizer \eqref{yeta:AC}. 
		
		\begin{Lem}[Entropy prox-bound]
			\label{lem:entropy:bound}
			Consider the prox-function \eqref{d} and let $\yeta(\rho,\alpha)$ be the optimizer of \eqref{yeta:AC}. Then, for every $\eta > 0$, $\rho$, and $\|\alpha\|_\Rnorm \le \xnb$, we have $d\big(\yeta(\rho, \alpha)\big) \le C\max\big\{\log(c\eta^{-1}),1\big\}$ where
			\begin{align*}
			\qquad C \Let \dim(K), \qquad c \Let {\e \over \dim(K)} \big(\xnb \ratio (\max\{L_Q,1\}+ 1)+ \|\cost\|_\lip \big),
			\end{align*}
			and $\ratio$ is the equivalence ratio between the norms $\|\cdot\|_{\ell_1}$ and $\|\cdot\|_\Rnorm$ as defined in \eqref{opt}. 
		\end{Lem}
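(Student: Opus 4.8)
The plan is to exploit the explicit Gibbs form of the maximizer $\yeta(\rho,\alpha)$ furnished by Lemma~\ref{lem:entropy} to evaluate the prox-function in closed form, and then to control the resulting partition function through Lemma~\ref{lem:exp}. Writing $g := b-\opn\alpha$ as in \eqref{g} and $Z_\eta := \inner{\exp(\eta^{-1}g)}{\Leb}$ for the normalizing constant, Lemma~\ref{lem:entropy} yields $\diff\yeta/\diff\Leb = \exp(\eta^{-1}g)/Z_\eta$, so that $\log(\diff\yeta/\diff\Leb) = \eta^{-1}g - \log Z_\eta$. Substituting this into the definition \eqref{d} of $d$ and using that $\yeta$ is a probability measure (on the positive cone the Wasserstein norm equals the total mass, which is fixed to $1$ in \eqref{Y}), I would obtain
\[
d(\yeta) = \inner{\log(\diff\yeta/\diff\Leb)}{\yeta} = \eta^{-1}\inner{g}{\yeta} - \log Z_\eta.
\]

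Next I would bound the two terms separately. Since $\yeta$ is a probability measure, $\inner{g}{\yeta} \le \gmax := \max_{k\in K} g(k)$, so the first term is at most $\eta^{-1}\gmax$; factoring $\exp(\eta^{-1}\gmax)$ out of $Z_\eta$ collapses the difference into
\[
d(\yeta) \le -\log \int_K \exp\big(\eta^{-1}(g(k)-\gmax)\big)\,\diff k.
\]
Applying Lemma~\ref{lem:exp} with $m = \dim(K)$ and the Lipschitz constant $L_g$ of $g$ lower-bounds this integral by $\min\{(m\eta/L_g)^m,1\}\exp(-\min\{m,L_g\eta^{-1}\})$, and taking $-\log$ gives
\[
d(\yeta) \le \max\{m\log(L_g/(m\eta)),0\} + \min\{m,L_g\eta^{-1}\} \le \max\{m\log(L_g/(m\eta)),0\} + m .
\]

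The only MDP-specific ingredient is the Lipschitz constant $L_g$. As the constant $\rho$ does not contribute, $g$ inherits its Lipschitz constant from $-\cost$ and from $\sum_i \alpha_i (Q-I)u_i$; combining $\mathrm{Lip}(\cost)\le\|\cost\|_\lip$, the operator estimate $\|(Q-I)u_i\|_\lip \le \max\{L_Q,1\}+1$ from Lemma~\ref{lem:operator} with the normalization $\|u_i\|_\lip=1$, and the norm equivalence $\|\alpha\|_{\ell_1}\le\ratio\|\alpha\|_\Rnorm\le\ratio\xnb$, I would arrive at $L_g \le \|\cost\|_\lip + \xnb\ratio(\max\{L_Q,1\}+1) =: \Lambda$.

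Finally I would reconcile this with the stated constants. With $C = m = \dim(K)$ and $c = (\e/m)\Lambda$, setting $t := \log(\Lambda/(m\eta))$ gives $\log(c\eta^{-1}) = 1+t$, so the target reads $C\max\{\log(c\eta^{-1}),1\} = m(1+\max\{t,0\}) = m\max\{t,0\}+m$; since $L_g\le\Lambda$ the bound of the previous step is at most $m\max\{t,0\}+m$, matching exactly. I expect the main obstacle to be not any single inequality but the bookkeeping that makes the additive $+m$ — produced by the $\exp(-\min\{m,L_g\eta^{-1}\})$ tail in Lemma~\ref{lem:exp} — be absorbed precisely by the factor $\e$ in $c$ through $\log\e = 1$, while keeping the nested $\min$/$\max$ expressions aligned.
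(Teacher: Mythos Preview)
Your proposal is correct and follows essentially the same route as the paper: both evaluate $d(\yeta)$ via the Gibbs density from Lemma~\ref{lem:entropy}, reduce to $-\log\int_K\exp(\eta^{-1}(g-\gmax))\,\diff k$ using $\inner{g}{\yeta}\le\gmax$, apply Lemma~\ref{lem:exp}, and then bound $L_g$ via Lemma~\ref{lem:operator} together with $\|\alpha\|_{\ell_1}\le\ratio\xnb$. Your explicit reconciliation of the additive $+m$ with the factor $\e$ in $c$ is exactly the algebra the paper carries out implicitly when it passes to $\dim(K)\max\{\log((\e L_g/\dim(K))\eta^{-1}),1\}$.
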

		
		\begin{proof}
			The result is a direct application of Lemma~\ref{lem:exp}. Consider the function $g$ as defined in \eqref{g} with Lipschitz constant $L_g \ge 0$; note that the function $g$, as well as its Lipschitz constant $L_g$, depends also on the pair $(\rho,\alpha)$. Observe that 
			\begin{align}
			d\big(\yeta(\rho,\alpha)\big) &=  \inner{\log\big(\exp(\eta^{-1}g)\big)}{\yeta(\rho,\alpha)} - \log\big(\inner{\exp(\eta^{-1}g)}{\Leb}\big)\notag\\
			& = \inner{\eta^{-1}g}{\yeta(\rho,\alpha)} - \log\big(\inner{\exp(\eta^{-1}g)}{\Leb}\big)\notag\\
			& = \inner{\eta^{-1}g}{\yeta(\rho,\alpha)} - \eta^{-1}\gmax - \log\big(\inner{\exp(\eta^{-1}(g - \gmax)}{\Leb}\big) \notag\\
			&\le -\log\big(\inner{\exp(\eta^{-1}(g-\gmax))}{\Leb}\big) \notag\\
			& \le - \log\bigg( \min\Big\{\Big({\dim(K) \eta \over L_g}\Big)^{\dim(K)}, 1\Big\} \exp \big( - \min\big\{\dim(K), L_g  \eta^{-1}\big\} \big)\bigg) \label{eq:lemma:exp} \\
			& \le\dim(K)\max\Big\{\log\Big(\Big({\e L_g\over \dim(K)}\Big)\eta^{-1}\Big),1\Big\}\notag
			\end{align}
			where the inequality \eqref{eq:lemma:exp} follows from Lemma~\ref{lem:exp}. Note also that the Lipschitz constant $L_g$ for the function $g$ defined in \eqref{g} is upper bounded, uniformly in $(\rho,\alpha)$ where $\|\alpha\|_\Rnorm \le \xnb$, by
			\begin{align*}
			L_g \le \|g - \rho\|_{\lip} &\le \Big\| \sum_{i=1}^{n} \alpha_i(Q-I)u_i + \cost\Big\|_\lip \le (\max\{L_Q,1\}+1)\Big\|\sum_{i=1}^{n}\alpha_i u_i \Big\|_\lip + \|\cost\|_\lip \\
			&\le \xnb \ratio (\max\{L_Q,1\}+ 1)+ \|\cost\|_\lip.
			\end{align*} 
			We refer to the proof of Corollary~\ref{cor:adp:semi-finite}, and in particular the paragraph following \eqref{Lg}, for further discussions regarding $L_g$. The desired assertion follows from the last two inequalities and the definition of the constant $\ynb$ in \eqref{Y}.
		\end{proof} 	
		
		The proof of Corollary~\ref{cor:adp:smooth} follows by replacing the constants in Lemma~\ref{lem:entropy:bound} in Theorem~\ref{thm:semi-fin:smooth}. By contrast to the randomized approach in Corollary~\ref{cor:adp:semi-finite} where the computational complexity scales exponentially in dimensional of state-action space, the complexity of the smoothing technique grows effectively linearly (more precisely $\order\big(\eps^{-1}\sqrt{\log(\eps^{-1})}\big)$, cf. Remark~\ref{rem:complex}). The computational difficulty is, however, transferred to Step 1 of Algorithm~\ref{alg} for computation of $\opn^* \yeta$ as defined in \eqref{Lny}. The following remark elaborates this.
		
		\begin{Rem}[Efficient computation of \eqref{Lny}]
			When the transition kernel $Q$ and the basis functions $u_{i}$ are such that the relation \eqref{Lny} involves integration of exponentials of polynomials over simple sets (e.g., box or a simplex), one may utilize efficient methods that require solving a hierarchy of semidefinite programming problems to generate upper and lower bounds which asymptotically converge to the true value of integral, see \cite[Section~12.2]{ref:Lasserre-11} and \cite{ref:Bertsimas-08}. It is also worth noting that a straightforward computation of \eqref{Lny} for a small parameter $\eta$ may be numerically difficult due to the exponential functions. This issue can, however, be circumvented by a numerically stable technique presented in \cite[p.~148]{ref:Nest-05}. 
		\end{Rem}
		
		Regarding the choice of $\xnb$, in similar spirit to Section~\ref{sec:semi-fin:rand}, one can target minimizing the complexity of the a priori bound, in other words the number of iterations $k$ in \eqref{eta-k}. In the setting of  Corollary~\ref{cor:adp:smooth}, one can observe that the smaller the parameter $\xnb$, the lower the number of the required iterations, leading to the choice described as in \eqref{theta*:MDP}.				
		
		\section{Full Infinite to Finite Programs} 
		\label{sec:inf-fin}
		
		The intention in this short section is to combine the two-step process from infinite to semi-infinite programs in Section~\ref{sec:inf-semi} and from semi-infinite to finite programs in Section~\ref{sec:semi-fin:rand} and \ref{sec:semi-fin:smoothing}, and hence establish a link from the original infinite program to finite counterparts. We only present the final result for the general infinite programs without discussing its implication in the MDP setting, as it is essentially a similar assertion. 
		
		\begin{Thm}[Infinite to finite approximation error]
			\label{thm:inf-fin}
			Consider the infinite program \ref{primal-inf} with a solution $\{x\opt,\Jp\}$, the finite (random) convex program \ref{primal-n,N} with the (random) solution $\big\{\alpha\opt_N,\JpnN\big\}$, and the output of Algorithm~\ref{alg} with values $\big\{\Jnlb, \Jnub\big\}$. Suppose Assumption~\ref{a:reg} holds and assume further that there exists constant $d, D$ so that the projection residual of the optimizer $x\opt$ onto the finite dimensional ball defined in Theorem~\ref{thm:inf-semi} is bounded by $\|r_n\| \le D n^{-1/d}$ for all $n\in\N$. Then, for any number of scenario samples $N$ and prox-term coefficient $\eta$, with probability $1-\beta$ we have 
			\begin{align*}
			\max\big\{\JpnN,\Jnlb\big\} - D\big(\|c\|_* + \ynb\|\op\|\big)n^{-1/d} \le \Jp \le \min\big\{\Jnub ~,~\JpnN+ \ynb h(\alpha\opt_N,\eps) \big\}.
			\end{align*}
			where $\ynb$ is as defined in \eqref{yb} and the function $h$ is a TB in the sense of Definition~\ref{def:tail}. Moreover, given an a priori precision level $\eps$, if 
			$$n \ge \Big(D\big(\|c\|_* + \ynb\|\op\|\big)\eps^{-1}\Big)^{d},$$
			and the number samples $N$ are chosen as in \eqref{NN} {\em or} the parameter $\eta$ together with the number of iterations of Algorithm~\ref{alg} is chosen as in \eqref{eta-k}, then with probability $1-\beta$ we have 
			\begin{align*}
			\min\Big\{|\Jp - \JpnN|, |\Jp - \Jnlb| \Big\} \le \eps \,.
			\end{align*}
		\end{Thm}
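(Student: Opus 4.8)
The plan is to compose the two approximation steps already established, using the semi-infinite value $\Jpn$ as the common pivot. The infinite-to-semi-infinite step is controlled by Theorem~\ref{thm:inf-semi}, while the semi-infinite-to-finite step is controlled by Theorem~\ref{thm:semi-fin:rand} in the randomized branch and by Theorem~\ref{thm:semi-fin:smooth} in the smoothing branch. Since $\Jpn$ sits above both $\Jp$ and $\JpnN$ and is sandwiched by the algorithmic outputs $\Jnlb \le \Jpn \le \Jnub$, the whole argument reduces to recording how the one-sided estimates chain together, keeping careful track of which inequalities are deterministic and which hold only with confidence $1-\beta$.

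First I would collect the deterministic estimates. Under Assumption~\ref{a:reg}, Theorem~\ref{thm:inf-semi} gives $0 \le \Jpn - \Jp \le (\|c\|_* + \ynb\|\op\|)\|r_n\|$, and the standing hypothesis $\|r_n\| \le D n^{-1/d}$ turns this into $\Jpn - D(\|c\|_* + \ynb\|\op\|)n^{-1/d} \le \Jp \le \Jpn$. Because \ref{primal-n,N} is a relaxation of \ref{primal-n} we always have $\JpnN \le \Jpn$, and Theorem~\ref{thm:semi-fin:smooth} always gives $\Jnlb \le \Jpn \le \Jnub$. Chaining these yields, with no appeal to randomness, both $\JpnN - D(\|c\|_* + \ynb\|\op\|)n^{-1/d} \le \Jp$ and $\Jnlb - D(\|c\|_* + \ynb\|\op\|)n^{-1/d} \le \Jp$, hence the left-hand inequality of the claimed display after taking the maximum; likewise $\Jp \le \Jpn \le \Jnub$ supplies one half of the right-hand side.

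The only probabilistic ingredient is the remaining upper estimate through $\JpnN$. By assertion \eqref{eq:thm:semi-fin:rand:1} of Theorem~\ref{thm:semi-fin:rand}, on an event of probability at least $1-\beta$ one has $\Jpn - \JpnN \le \ynb h(\alpha\opt_N,\eps)$; combined with $\Jp \le \Jpn$ this gives $\Jp \le \JpnN + \ynb h(\alpha\opt_N,\eps)$ on that same event. Intersecting with the deterministic bounds and taking the minimum on the right produces the full two-sided display with probability $1-\beta$.

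For the precision claim, the choice $n \ge \big(D(\|c\|_* + \ynb\|\op\|)\eps^{-1}\big)^{d}$ forces $D(\|c\|_* + \ynb\|\op\|)n^{-1/d} \le \eps$, so $\Jp \in [\Jpn - \eps, \Jpn]$. If $N$ is chosen as in \eqref{NN}, then \eqref{eq:thm:semi-fin:rand:2} places $\JpnN$ in the same interval $[\Jpn - \eps, \Jpn]$ with probability $1-\beta$, whence $|\Jp - \JpnN| \le \eps$; if instead $\eta$ and $k$ are chosen as in \eqref{eta-k}, then $\Jnub - \Jnlb \le \eps$ together with $\Jnlb \le \Jpn \le \Jnub$ places $\Jnlb$ in $[\Jpn - \eps, \Jpn]$, whence $|\Jp - \Jnlb| \le \eps$. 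Either way one of the two terms inside the minimum is at most $\eps$, giving the final assertion. I do not expect a genuine obstacle here --- the result is essentially a synthesis of the preceding theorems --- and the only point requiring care is the bookkeeping of the confidence $1-\beta$ for the randomized branch, the smoothing branch being entirely deterministic.
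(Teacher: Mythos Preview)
Your proposal is correct and matches the paper's approach exactly: the paper's own proof is a single sentence stating that the result follows by combining Theorem~\ref{thm:inf-semi} with Theorems~\ref{thm:semi-fin:rand} and~\ref{thm:semi-fin:smooth}, and you have faithfully and carefully unpacked that chain of inequalities, correctly distinguishing the deterministic estimates from the probabilistic one.
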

		
		The proof follows readily from the link between the infinite program~\ref{primal-inf} to the semi-infinite counterpart~\ref{primal-n} in Theorem~\ref{thm:inf-semi}, in conjunction with the link between \ref{primal-n} to the finite programs~\ref{primal-n,N} and \ref{dual-n-eta} in Theorems~\ref{thm:semi-fin:rand} and \ref{thm:semi-fin:smooth}, respectively.
		
		The assertion of Theorem~\ref{thm:inf-fin} can be readily translated into the MDP problem by replacing the dual optimizer bound $\ynb$ with $1$ thanks to Lemma~\ref{lem:MDP:bd-dual}, and the term $(\|c\|_* + \ynb\|\op\|)$ with $(1+\max\{L_Q,1\})$ thanks to Corollary~\ref{cor:MDP:inf-semi}. In this case, the requirement concerning the projection residual bound $\|r_n\|\le Dn^{-1/d}$ is fulfilled due to the Lipschitz continuity of the value function when $d = \dim(S)$ and the finite dimensional approximation is generated by, among others, polynomials \cite{ref:Farouki-12} or the Fourier basis \cite{ref:Olver-09} (cf. Remark~\ref{rem:projection}).
		
\section{Numerical Examples} 
\label{sec:sim}
We present two numerical examples to illustrate the solution methods and corresponding performance bounds. Throughout this section we consider the norm $\|\cdot\|_\Rnorm = \|\cdot\|_{\ell_2}$, leading to $\ratio = \sqrt{n}$ in \eqref{opt}, and we choose the Fourier basis functions. 

\subsection{Example 1: truncated LQG} 
\label{ex:LQG}
Consider the linear system
	\begin{align*}
	s_{t+1}=\vartheta s_{t} + \rho a_{t} + \xi_{t}, \quad t \in \N,
	\end{align*}
with quadratic stage cost $\psi(s,a)=qs^{2}+ra^{2}$, where $q\geq 0$ and $r>0$ are given constants. We assume that $S=A=[-L,L]$ and the parameters $\vartheta, \rho \in\R$ are known. The disturbances $\{\xi_{t}\}_{t\in\N}$ are i.i.d.\ random variables generated by a truncated normal distribution with known parameters $\mu$ and $\sigma$, independent of the initial state $s_{0}$. Thus, the process $\xi_{t}$ has a distribution density
	\begin{align*}
	f(s,\mu,\sigma,L) = \left\{ 
	\begin{array}{cc} 
	\frac{\frac{1}{\sigma}\phi\left( \frac{s-\mu}{\sigma} \right)}{\Phi\left( \frac{L-\mu}{\sigma} \right)-\Phi\left( \frac{-L-\mu}{\sigma} \right)}, \quad & s\in[-L,L]\\
	0 & \text{o.w.},
	\end{array} \right.
	\end{align*}
where $\phi$ is the probability density function of the standard normal distribution, and $\Phi$ is its cumulative distribution function. The transition kernel $Q$ has a density function $q(y|s,a)$, i.e., $Q(B|s,a)=\int_{B}q(y|s,a)\drv y$ for all $B\in\Borel{S}$, that is given by
	\begin{align*}
	q(y|s,a)=f(y-\vartheta s - \rho a,\mu,\sigma,L).
	\end{align*}
In the special case that $L=+\infty$ the above problem represents the classical LQG problem, whose solution can be obtained via the algebraic Riccati equation \cite[p.~372]{ref:Bertsekas-12}. By a simple change of coordinates it can be seen that the presented system fulfills Assumption~\ref{a:CM}. The following lemma provides the technical parameters required for the proposed error bounds.
		
\begin{Lem}[Truncated LQG properties] \label{lem:LQG}
The error bounds provided by Corollaries~\ref{cor:adp:semi-finite} and \ref{cor:adp:smooth} hold with the norms $\|\psi\|_\infty = L^2(q+r)$, $\|\psi\|_\lip = 4L^2\sqrt{q^2+r^2}$, and the Lipschitz constant of the kernel is
	\begin{align*} 
	L_Q &=\frac{2L\max\{\vartheta,\rho\}}{\sigma^2\sqrt{2\pi}\left( \Phi\left( \frac{L-\mu}{\sigma} \right)-\Phi\left( \frac{-L-\mu}{\sigma} \right)\right)}\,.
	\end{align*}
\end{Lem}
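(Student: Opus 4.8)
The plan is to verify Assumption~\ref{a:CM} via the affine change of coordinates alluded to in the example and then read off each of the three quantities by elementary estimates, since $\psi$, its gradient, and the transition density are all explicit. First I would rescale $S=A=[-L,L]$ to the unit interval through $\tilde s=(s+L)/(2L)$ and $\tilde a=(a+L)/(2L)$, so that $K$ becomes $[0,1]^2$ as demanded by Assumption~\ref{a:CM}\ref{a:CM:K}; this is the step that injects the factor $2L$ (hence $4L^2$ after the chain rule) into the Lipschitz data. With this normalization in place, the remaining work is three direct computations.

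For $\|\psi\|_\infty$ I would maximize the separable quadratic $qs^2+ra^2$ over the box $[-L,L]^2$; since $q\ge 0$ and $r>0$ the maximum sits at a corner $(\pm L,\pm L)$ and equals $qL^2+rL^2=L^2(q+r)$, and this supremum is plainly invariant under the relabeling of the domain. For the Lipschitz part of $\|\psi\|_\lip$ I would differentiate in the rescaled coordinates, obtaining $\nabla\tilde\psi=\big(4Lq(2L\tilde s-L),\,4Lr(2L\tilde a-L)\big)$, whose Euclidean norm is maximized at a corner and equals $4L^2\sqrt{q^2+r^2}$; comparing this with $\|\psi\|_\infty$ and taking the larger of the two recovers the claimed value.

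The substantive step is $L_Q$, i.e.\ verifying Assumption~\ref{a:CM}\ref{a:CM:Q} for the kernel with density $q(y\mid s,a)=f(y-\vartheta s-\rho a,\mu,\sigma,L)$. I would begin from
\begin{align*}
|Qu(s,a)-Qu(s',a')| \le \|u\|_\infty \int_{-L}^{L}\big|f(y-m)-f(y-m')\big|\,\diff y, \qquad m\Let\vartheta s+\rho a,\; m'\Let\vartheta s'+\rho a',
\end{align*}
and then control the $L^1$ perturbation of the truncated Gaussian under the mean shift $m-m'$. Using the interior bound $|f(y-m)-f(y-m')|\le(\sup|f'|)\,|m-m'|$, integrating over the window of length $2L$, and estimating the mean shift by $\max\{\vartheta,\rho\}\,\|(s,a)-(s',a')\|_{\ell_\infty}$ supplies the factor $2L\max\{\vartheta,\rho\}$. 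Since $f'(s)=(Z\sigma^2)^{-1}\phi'((s-\mu)/\sigma)$ with $Z=\Phi(\tfrac{L-\mu}{\sigma})-\Phi(\tfrac{-L-\mu}{\sigma})$, bounding $\sup_x|\phi'(x)|\le 1/\sqrt{2\pi}$ contributes the remaining factor $(\sigma^2\sqrt{2\pi}\,Z)^{-1}$, which assembles into the stated $L_Q$.

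The hard part is precisely this last estimate, and I expect the difficulty to be bookkeeping rather than conceptual. The truncated density is not genuinely Lipschitz on all of $\R$: it jumps from a positive value to zero at $\pm L$, so the interior bound $|f(y-m)-f(y-m')|\le(\sup|f'|)\,|m-m'|$ must be justified carefully near the truncation boundary (the support $[m-L,m+L]$ only partially overlaps the integration window $[-L,L]$), and one has to argue that the boundary contribution does not spoil the $\sigma^{-2}$ scaling. The precise constant is then pinned down by two further choices: passing from $|\vartheta(s-s')+\rho(a-a')|$ to $\max\{\vartheta,\rho\}\,\|\cdot\|_{\ell_\infty}$, and using the loose bound $\sup|\phi'|\le 1/\sqrt{2\pi}$ in place of the sharp value $e^{-1/2}/\sqrt{2\pi}$ attained at $x=\pm 1$. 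These are exactly the steps that determine the reported form of $L_Q$, and they are where I would concentrate the rigor.
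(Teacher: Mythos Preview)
Your proposal takes exactly the approach the paper uses: the same affine rescaling $\bar s=\tfrac{s}{2L}+\tfrac12$, $\bar a=\tfrac{a}{2L}+\tfrac12$ followed by direct computation of the three constants, which is all the paper's proof says (``in the new coordinates, the constants follow from a standard computation''). You have in fact supplied considerably more detail than the paper does, and the technical caveats you flag---the boundary discontinuity of the truncated density and the exact passage from $|\vartheta(s-s')+\rho(a-a')|$ to the stated $\max\{\vartheta,\rho\}$ factor---are precisely the points where the paper's ``standard computation'' is silent.
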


\begin{proof}
	In regard to Assumption~\ref{a:CM}\ref{a:CM:K}, we consider the change of coordinates $\bar{s}_t \Let \frac{s_t}{2L}+\frac{1}{2}$ and $\bar{a}_t \Let \frac{a_t}{2L}+\frac{1}{2}$. In the new coordinates, the constants of Lemma~\ref{lem:LQG} follow from a standard computation.
\end{proof}

\paragraph{\emph{Simulation details:}}
For the simulation results we choose the numerical values $\vartheta = 0.8$, $\rho = 0.5$, $\sigma = 1$, $\mu = 0$, $q = 1$, $r = 0.5$, and $L=10$. In the first approximation step discussed in Section~\ref{subsec:semi:MDP}, we consider the Fourier basis $u_{2k-1}(s) = \frac{L}{k\pi}\cos\left(\frac{k \pi s}{L}\right)$ and $u_{2k}(s) = \frac{L}{k\pi}\sin\left(\frac{k \pi s}{L}\right)$. 
	
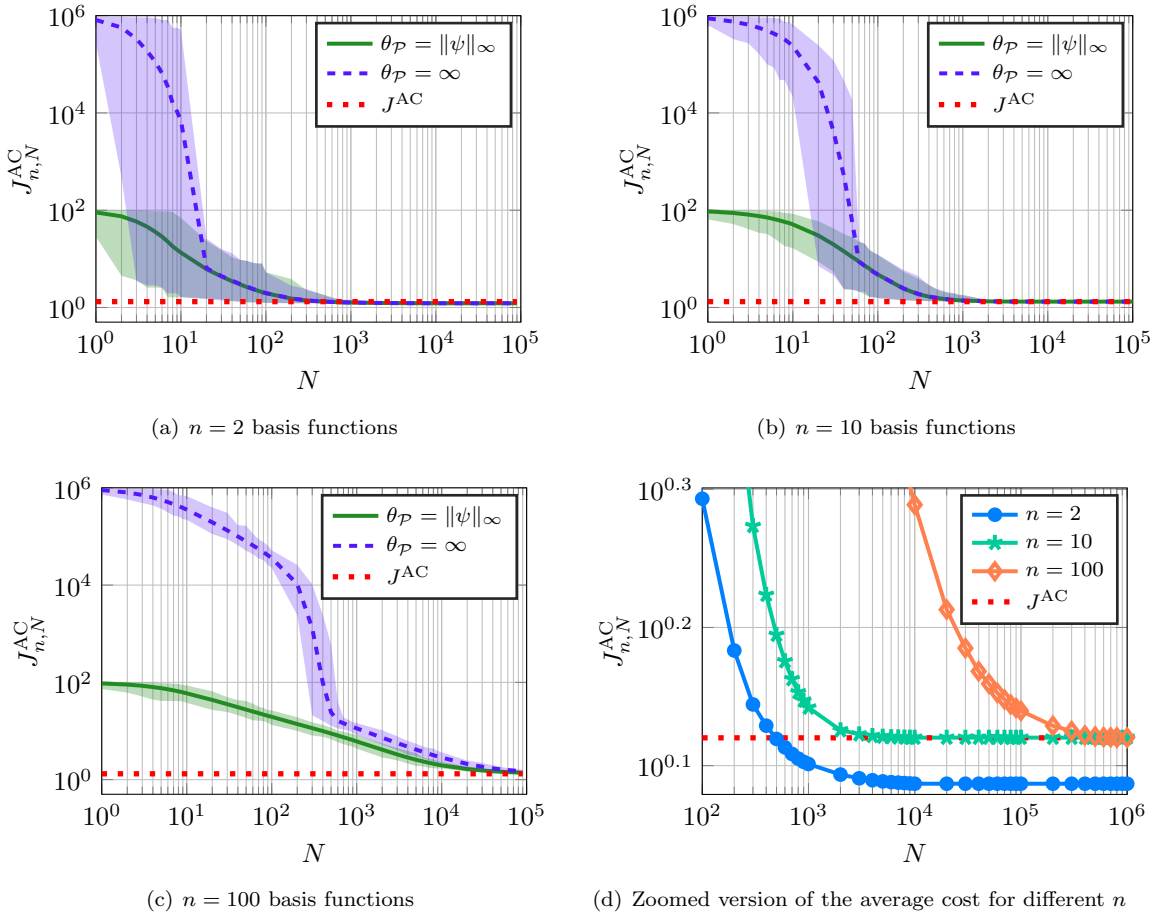
\begin{figure}[t]
	\subfigure[$n=2$ basis functions]{\scalebox{1}{
%
%
\definecolor{mycolor1}{rgb}{0.13, 0.55, 0.13}%
\definecolor{mycolor2}{rgb}{0.13, 0.55, 0.13}%

\definecolor{mycolor3}{rgb}{0.32,0.09,0.98}%
\definecolor{mycolor4}{rgb}{0.32,0.09,0.98}%
\begin{tikzpicture}

\begin{axis}[%
width=2.2in,
height=1.6in,
at={(1.011111in,0.813889in)},
scale only axis,
xmode=log,
xmin=1,
xmax=100000,
xminorticks=true,
xlabel={$N$},
xmajorgrids,
xminorgrids,
ymode=log,
ymin=0.5,
ymax=1000000,
ylabel={$J^{\text{AC}}_{n,N}$},
ymajorgrids,
ylabel style={yshift=-0.3cm},
legend style={legend cell align=left,align=left,draw=white!15!black,line width=1.0pt,font=\footnotesize}
]

\addplot[area legend,solid,fill=mycolor1,opacity=0.3,draw=none,forget plot]
table[row sep=crcr] {%
x	y\\
1	28.5805565420901\\
2	4.47632011102824\\
3	3.63625813510152\\
4	2.57054306764981\\
5	2.5705430451617\\
6	2.51737844086245\\
7	1.62086718607834\\
8	1.62086707873988\\
9	1.62086722353572\\
10	1.62086742950948\\
20	1.4816911948426\\
30	1.48169119341355\\
40	1.33395592104315\\
50	1.33395591974424\\
60	1.26186486900902\\
70	1.25936985320541\\
80	1.24252029228155\\
90	1.24252030145971\\
100	1.24177255933975\\
200	1.22146003433544\\
300	1.22138593743673\\
400	1.22138593696591\\
500	1.21990362266073\\
600	1.21984601184503\\
700	1.21984598782825\\
800	1.21984599252904\\
900	1.21851742859662\\
1000	1.21851742871054\\
2000	1.218517423794\\
3000	1.21851743224633\\
4000	1.21789756009852\\
5000	1.21784630283811\\
6000	1.21784630155865\\
7000	1.21780528641541\\
8000	1.21760200049366\\
9000	1.21744678652966\\
10000	1.21744678649881\\
90000      1.21744678649881\\
90000	1.2367915188808\\
10000	1.2367915188808\\
9000	1.23679151890685\\
8000	1.23874581894327\\
7000	1.24546711731161\\
6000	1.25055594704458\\
5000	1.250555946855\\
4000	1.25815881034301\\
3000	1.26872506989623\\
2000	1.30843172824957\\
1000	1.40877331859085\\
900	1.43408597381899\\
800	1.44803659772777\\
700	1.50904655068873\\
600	1.62281061306987\\
500	1.80402902141993\\
400	2.00288513809667\\
300	2.28684318616537\\
200	3.83320052838975\\
100	5.30781654327367\\
90	8.01504564206975\\
80	8.01504561930379\\
70	8.30009646023389\\
60	9.06756712328384\\
50	9.54571340420395\\
40	10.7947161495485\\
30	14.9844606772684\\
20	23.479991472872\\
10	70.1733036265233\\
9	78.1410743140534\\
8	91.9028975758403\\
7	94.797941357001\\
6	94.7979413611145\\
5	94.797941279872\\
4	98.2305489066226\\
3	99.1672795005246\\
2	99.991713310573\\
1	100.000001529317\\
}--cycle;

\addplot [color=mycolor2, solid,line width=1.5pt]
  table[row sep=crcr]{%
1	90.0988050872139\\
2	74.8559092713465\\
3	58.3740012753949\\
4	45.7955902519346\\
5	35.8635147694922\\
6	28.5860308398038\\
7	22.5827573210655\\
8	18.0333956261914\\
9	15.3877213318488\\
10	13.4436958792473\\
20	6.14068215774133\\
30	4.34954158372674\\
40	3.46122965614533\\
50	2.97739665229225\\
60	2.62103541037543\\
70	2.38307666634709\\
80	2.22704450558327\\
90	2.06498996159715\\
100	1.96438382986823\\
200	1.52532404666544\\
300	1.39413594434313\\
400	1.34570792651888\\
500	1.31627993472894\\
600	1.29786965282214\\
700	1.28371427700216\\
800	1.27386956207888\\
900	1.26695036359398\\
1000	1.26243951330834\\
2000	1.24033826626804\\
3000	1.23266956870538\\
4000	1.22871956187396\\
5000	1.22633726262058\\
6000	1.22479092312255\\
7000	1.22379566139958\\
8000	1.22287742352561\\
9000	1.22215100223844\\
10000	1.22162047102615\\
90000	1.22162047102615\\
};
\addlegendentry{$\xnb = \|\cost\|_\infty$};

\addplot[area legend,solid,fill=mycolor3,opacity=0.25,draw=none,forget plot]
table[row sep=crcr] {%
x	y\\
1	274221.033000622\\
2	469.613556032881\\
3	3.48721473507746\\
4	2.85193524555087\\
5	2.8519364270079\\
6	2.5965971629727\\
7	1.95753391047949\\
8	1.9575541777959\\
9	1.95755429557138\\
10	1.63669708450418\\
20	1.53433558516822\\
30	1.38210203317975\\
40	1.38210216885707\\
50	1.29335500621487\\
60	1.29335470494149\\
70	1.29335432955327\\
80	1.2933542216002\\
90	1.29335455602246\\
100	1.2466505804252\\
200	1.22404341249512\\
300	1.22404290796734\\
400	1.22053012115844\\
500	1.21990358559088\\
600	1.21984488819389\\
700	1.21984599034574\\
800	1.2198459844731\\
900	1.21984548946883\\
1000	1.21943079603014\\
2000	1.21943125742712\\
3000	1.21910571951159\\
4000	1.2177775156776\\
5000	1.21777707029804\\
6000	1.2177771371855\\
7000	1.21777713736774\\
8000	1.21760200096946\\
9000	1.21744678586919\\
10000	1.21744678595159\\
90000	1.21744678595159\\
90000	1.23327839942864\\
10000	1.23327839942864\\
9000	1.23329468709275\\
8000	1.24137528539087\\
7000	1.24137527394481\\
6000	1.24137528421462\\
5000	1.24494923824726\\
4000	1.25882152897169\\
3000	1.29000970091385\\
2000	1.29949424210475\\
1000	1.40877323061208\\
900	1.43408596392102\\
800	1.43542924566361\\
700	1.50904653496347\\
600	1.56708936228215\\
500	1.75134995924504\\
400	2.00288506198568\\
300	2.39057443930394\\
200	2.79984469509506\\
100	5.30961100123332\\
90	8.01503965956585\\
80	8.01503700224842\\
70	8.30009582075411\\
60	9.06756522040735\\
50	9.54571331006024\\
40	13.0100271086148\\
30	15.8465288076035\\
20	26.0095176807156\\
10	516138.231060319\\
9	563124.099372117\\
8	563124.102533274\\
7	817984.800975262\\
6	817984.800673086\\
5	891470.172514893\\
4	919705.358565746\\
3	921882.29880044\\
2	941777.972236723\\
1	945309.87173611\\
}--cycle;

\addplot [color=mycolor4,dashed,line width=1.5pt]
  table[row sep=crcr]{%
1	815296.000743184\\
2	570766.640331088\\
3	330784.106503749\\
4	187151.022981435\\
5	112220.758936073\\
6	65285.3958875767\\
7	34921.2444390044\\
8	15847.8435941127\\
9	10790.164414946\\
10	6999.1311452946\\
20	6.46017233573243\\
30	4.40786153197229\\
40	3.57754254365448\\
50	3.03359733715022\\
60	2.71604155729113\\
70	2.4146911555697\\
80	2.2565160912416\\
90	2.08240316384355\\
100	1.97999087837892\\
200	1.53403928117415\\
300	1.40535725279897\\
400	1.35232089143797\\
500	1.32083068402102\\
600	1.3009730333718\\
700	1.28655650264788\\
800	1.27561693749942\\
900	1.26919993033891\\
1000	1.26373398482813\\
2000	1.24143831516366\\
3000	1.23353693215556\\
4000	1.22911257498496\\
5000	1.22657498445719\\
6000	1.22497392900458\\
7000	1.22395425939084\\
8000	1.22310237614617\\
9000	1.22221048015023\\
10000	1.22176594472622\\
90000	1.22176594472622\\
};
\addlegendentry{$\xnb=\infty$}

\addplot [color=red,loosely dotted,line width=2pt]
  table[row sep=crcr]{%
1	1.3187\\
90000	1.3187\\
};
\addlegendentry{$\Jac$}

\end{axis} 
\end{tikzpicture}
	\qquad 
	\subfigure[$n=10$ basis functions]{\scalebox{1}{
%
%
\definecolor{mycolor1}{rgb}{0.13, 0.55, 0.13}%
\definecolor{mycolor2}{rgb}{0.13, 0.55, 0.13}%

\definecolor{mycolor3}{rgb}{0.32,0.09,0.98}%
\definecolor{mycolor4}{rgb}{0.32,0.09,0.98}%
\begin{tikzpicture}

\begin{axis}[%
width=2.2in,
height=1.6in,
at={(1.011111in,0.813889in)},
scale only axis,
xmode=log,
xmin=1,
xmax=100000,
xminorticks=true,
xlabel={$N$},
xmajorgrids,
xminorgrids,
ymode=log,
ymin=0.5,
ymax=1000000,
ylabel={$J^{\text{AC}}_{n,N}$},
ymajorgrids,
ylabel style={yshift=-0.3cm},
legend style={legend cell align=left,align=left,draw=white!15!black,line width=1.0pt,font=\footnotesize}
]

\addplot[area legend,solid,fill=mycolor1,opacity=0.3,draw=none,forget plot]
table[row sep=crcr] {%
x	y\\
1	64.8626787674932\\
2	51.8028663321495\\
3	40.8698505042221\\
4	34.2057123029274\\
5	32.327740150497\\
6	26.5906149656799\\
7	21.8167512994986\\
8	18.2942570690963\\
9	17.509433217615\\
10	16.5109237494213\\
20	8.68290108849987\\
30	5.49612925906822\\
40	2.35413086420275\\
50	2.32862407932307\\
60	2.12306400135207\\
70	1.86120000680995\\
80	1.63535781691707\\
90	1.63535781690026\\
100	1.55959083049867\\
200	1.39333461801874\\
300	1.3643441040029\\
400	1.35394428022446\\
500	1.34195332050755\\
600	1.33856909460061\\
700	1.32859904928777\\
800	1.32458818880617\\
900	1.32445301927657\\
1000	1.32445302330009\\
2000	1.31914600579702\\
3000	1.31879268391564\\
4000	1.31862187925411\\
5000	1.31844109127923\\
6000	1.31830688543409\\
7000	1.31826094723498\\
8000	1.31826094843103\\
9000	1.3182005639989\\
100000	1.31813536293608\\
100000	1.32263803288161\\
9000	1.32500128260116\\
8000	1.32539113783232\\
7000	1.32655764138582\\
6000	1.32883437777301\\
5000	1.33550740507875\\
4000	1.34085918309822\\
3000	1.37977303081213\\
2000	1.41450337883357\\
1000	1.82423091203754\\
900	1.84987753113501\\
800	1.94883222433922\\
700	2.04079150965957\\
600	2.06445841923901\\
500	2.34248038369511\\
400	2.96176544947812\\
300	3.88985563333783\\
200	5.47246905081519\\
100	12.1911192973486\\
90	13.3995722506444\\
80	15.4831188210459\\
70	20.3527647402716\\
60	22.5255326834499\\
50	27.9046058790261\\
40	32.1987850816379\\
30	42.5032034823197\\
20	59.1930381508071\\
10	84.0304606027895\\
9	86.9142744662561\\
8	86.9142745304304\\
7	93.2294273884365\\
6	94.253548028958\\
5	97.06945360186\\
4	98.4481209823781\\
3	99.2617095114665\\
2	99.9999994398324\\
1	100.000002879261\\
}--cycle;

\addplot [color=mycolor2, solid,line width=1.5pt]
  table[row sep=crcr]{%
1	94.5768870629217\\
2	87.6422421294588\\
3	81.1060535945275\\
4	75.7166598326944\\
5	70.8550067501774\\
6	65.818793100411\\
7	61.4661914492361\\
8	57.5867852858906\\
9	54.506862634783\\
10	51.0690016796575\\
20	30.1255056462458\\
30	19.8540579079187\\
40	14.2465207906624\\
50	10.8717423880872\\
60	8.85011139713774\\
70	7.25012839633849\\
80	6.23207351231921\\
90	5.3655282834809\\
100	4.77894898497726\\
200	2.44822770289855\\
300	1.87735697901229\\
400	1.67372096917809\\
500	1.56577991405623\\
600	1.49838919411101\\
700	1.45440220544531\\
800	1.42286573705222\\
900	1.40333873367853\\
1000	1.38702187301104\\
2000	1.33566597760556\\
3000	1.32648669905007\\
4000	1.32290226192807\\
5000	1.32128509023382\\
6000	1.32039006435097\\
7000	1.31992148967192\\
8000	1.31948814984273\\
9000	1.31924603433781\\
100000	1.31906764370644\\
};
\addlegendentry{$\xnb = \|\cost\|_\infty$};


\addplot[area legend,solid,fill=mycolor3,opacity=0.25,draw=none,forget plot]
table[row sep=crcr] {%
x	y\\
1	639983.366524608\\
2	329098.537697569\\
3	270518.925077757\\
4	162716.677544189\\
5	99203.2256406953\\
6	86226.4301855034\\
7	47489.5274014969\\
8	47489.5282154978\\
9	30133.0276457532\\
10	21613.4199719176\\
20	7.0906145341405\\
30	4.65928077970097\\
40	2.39043513144145\\
50	2.3286038154872\\
60	2.12306379225585\\
70	2.09667479516671\\
80	1.635356391661\\
90	1.63535708245283\\
100	1.55958962460745\\
200	1.42441044263701\\
300	1.3729378008794\\
400	1.34201033025407\\
500	1.34200855870122\\
600	1.33393016386478\\
700	1.32548079869412\\
800	1.3245880787709\\
900	1.32445276232808\\
1000	1.32067603411791\\
2000	1.31914752559519\\
3000	1.31890352096265\\
4000	1.31862187891315\\
5000	1.3184410887336\\
6000	1.31830689055684\\
7000	1.31826094801364\\
8000	1.31820169759628\\
9000	1.31816451028211\\
100000	1.31813534897557\\
100000	1.32635030823924\\
9000	1.3263503050298\\
8000	1.32709601638159\\
7000	1.32710598610841\\
6000	1.32938918012012\\
5000	1.33621956240585\\
4000	1.3408591123418\\
3000	1.38643803937312\\
2000	1.41323205875657\\
1000	1.7807446249738\\
900	1.78586817791992\\
800	1.94883224270823\\
700	1.95467052048799\\
600	2.19564446831188\\
500	2.376049145298\\
400	2.84118283785502\\
300	4.42306322890742\\
200	5.948249385366\\
100	12.1234649487366\\
90	13.2643994103875\\
80	14.8895276860826\\
70	20.6106203279187\\
60	22.9578071142619\\
50	18731.5689369941\\
40	46154.8527946401\\
30	122416.784984592\\
20	245819.513244859\\
10	657237.808452924\\
9	733798.353290344\\
8	733798.349269297\\
7	879520.407606198\\
6	884146.221461426\\
5	916423.123492617\\
4	921568.783455853\\
3	938169.047545757\\
2	948954.054124091\\
1	949823.811310828\\
}--cycle;

\addplot [color=mycolor4,dashed,line width=1.5pt]
  table[row sep=crcr]{%
1	881068.308142879\\
2	753794.288347249\\
3	651989.021149723\\
4	561892.86158924\\
5	489715.217209112\\
6	427140.172458315\\
7	373606.121950658\\
8	324733.263256534\\
9	277359.534656746\\
10	235953.292052392\\
20	42857.0692688388\\
30	4299.73531162478\\
40	503.880225630512\\
50	57.5866681916299\\
60	8.75517131945854\\
70	7.16879104175466\\
80	6.03343592545909\\
90	5.30458286475803\\
100	4.74690136159816\\
200	2.47432622510224\\
300	1.90523920521026\\
400	1.67809177228141\\
500	1.56928466228527\\
600	1.50170304052627\\
700	1.44790616627579\\
800	1.42030204832343\\
900	1.40220936819888\\
1000	1.38672104646169\\
2000	1.3356980463223\\
3000	1.32613696787708\\
4000	1.32267627846345\\
5000	1.32124562408155\\
6000	1.32037996927841\\
7000	1.31984030493074\\
8000	1.31951848108317\\
9000	1.31925491020946\\
100000	1.31909475493793\\
};
\addlegendentry{$\xnb=\infty$}

\addplot [color=red,loosely dotted,line width=2pt]
  table[row sep=crcr]{%
1	1.3187\\
100000	1.3187\\
};
\addlegendentry{$\Jac$}

\end{axis}
\end{tikzpicture}
	\\	
	\subfigure[$n=100$ basis functions]{\scalebox{1}{
%
%
\definecolor{mycolor1}{rgb}{0.13, 0.55, 0.13}%
\definecolor{mycolor2}{rgb}{0.13, 0.55, 0.13}%

\definecolor{mycolor3}{rgb}{0.32,0.09,0.98}%
\definecolor{mycolor4}{rgb}{0.32,0.09,0.98}%
\begin{tikzpicture}

\begin{axis}[%
width=2.2in,
height=1.6in,
at={(1.011111in,0.813889in)},
scale only axis,
xmode=log,
xmin=1,
xmax=100000,
xminorticks=true,
xlabel={$N$},
xmajorgrids,
xminorgrids,
ymode=log,
ymin=0.5,
ymax=1000000,
ylabel={$J^{\text{AC}}_{n,N}$},
ymajorgrids,
ylabel style={yshift=-0.3cm},
legend style={legend cell align=left,align=left,draw=white!15!black,line width=1.0pt, font=\footnotesize}
]

\addplot[area legend,solid,fill=mycolor1,opacity=0.3,draw=none,forget plot]
table[row sep=crcr] {%
x	y\\
1	73.0725008122354\\
2	61.4824626630137\\
3	51.1246287160035\\
4	44.1816795345505\\
5	41.624170651969\\
6	41.6241708288271\\
7	41.3325160220246\\
8	41.3026068831755\\
9	40.6233055772378\\
10	37.8298523875866\\
20	27.1803843820131\\
30	24.5439377663766\\
40	21.1587220918693\\
50	18.5528520648749\\
60	16.6437499563526\\
70	16.322223991574\\
80	15.8713158532552\\
90	14.8174019929416\\
100	13.3405730708024\\
200	10.0409771992707\\
300	8.04018316672937\\
400	7.60747751315085\\
500	6.78171835223416\\
600	5.89411527131351\\
700	5.34601605040283\\
800	5.02765091315349\\
900	4.77759951871282\\
1000	4.33511024741078\\
2000	2.85423098452847\\
3000	2.49758098138473\\
4000	2.13430309683285\\
5000	1.90766192491257\\
6000	1.87269917912076\\
7000	1.7562695919021\\
8000	1.7135652779055\\
9000	1.69424841286886\\
10000	1.69378703711643\\
20000	1.47208388988814\\
30000	1.42459098794049\\
40000	1.39714278405354\\
50000	1.38376127859128\\
60000	1.3739062869345\\
70000	1.37355295449137\\
80000	1.36373362951018\\
90000	1.35874593505327\\
90000	1.44662276075467\\
80000	1.45652639971006\\
70000	1.47712914703438\\
60000	1.51349283270477\\
50000	1.53008822831339\\
40000	1.61330074144611\\
30000	1.68707340458998\\
20000	1.86627528768281\\
10000	2.3131049866413\\
9000	2.42296812312227\\
8000	2.61777890277017\\
7000	2.68786005791279\\
6000	2.89127196564123\\
5000	3.510806434344\\
4000	3.88588637605768\\
3000	4.44770418607772\\
2000	5.55554394108026\\
1000	8.3041309640639\\
900	8.61069152874115\\
800	9.22388343498101\\
700	9.73895406512103\\
600	10.0887545803061\\
500	11.7423421318748\\
400	13.3781368228904\\
300	15.0578515420467\\
200	19.1904050392464\\
100	26.5167653669121\\
90	26.8915076038712\\
80	29.9242066761444\\
70	31.7378829721566\\
60	36.5436400216705\\
50	41.2533356354866\\
40	43.1544597275795\\
30	49.6880996420488\\
20	65.6018066727078\\
10	86.0282494739563\\
9	88.9376572262689\\
8	92.1518901496837\\
7	93.9846699180999\\
6	95.0637467475903\\
5	96.2394683425346\\
4	99.196295378522\\
3	99.1962963229621\\
2	99.93608188773\\
1	99.9999999628775\\
}--cycle;

  \addplot [color=mycolor2, solid,line width=1.5pt]
  table[row sep=crcr]{%
1	95.0700417717928\\
2	90.0671167740208\\
3	84.9944490270267\\
4	79.9920905070121\\
5	76.0027072571778\\
6	72.0642442192834\\
7	68.5216005663884\\
8	65.3043977366562\\
9	62.3114318130003\\
10	59.4892715267976\\
20	43.4644954999383\\
30	35.5965024418207\\
40	30.8718050780024\\
50	27.5461133266723\\
60	25.1746912498526\\
70	23.3269440347622\\
80	21.782611418054\\
90	20.5563711968901\\
100	19.5603805628542\\
200	13.9065438625585\\
300	11.4420274150589\\
400	9.97217331313981\\
500	8.8661674800461\\
600	8.08608694410586\\
700	7.44042640087966\\
800	6.93601395110263\\
900	6.49772069280504\\
1000	6.1011506875886\\
2000	4.12697750456289\\
3000	3.30182507653445\\
4000	2.84903992360772\\
5000	2.56617423483291\\
6000	2.37412705435564\\
7000	2.22851604396447\\
8000	2.11901250524476\\
9000	2.03455393456047\\
10000	1.96698930950154\\
20000	1.64938776316658\\
30000	1.54311305739475\\
40000	1.48459019402139\\
50000	1.45122052152376\\
60000	1.42868871164553\\
70000	1.41198856115809\\
80000	1.39930609641481\\
90000	1.39159597512432\\
};
\addlegendentry{$\xnb = \|\cost\|_\infty$};

\addplot[area legend,solid,fill=mycolor3,opacity=0.25,draw=none,forget plot]
table[row sep=crcr] {%
x	y\\
1	726032.932683365\\
2	575497.422786663\\
3	487235.424507427\\
4	433795.962742556\\
5	373175.234062319\\
6	311798.973653394\\
7	268995.513623822\\
8	258287.077277272\\
9	227632.440252786\\
10	220081.695430595\\
20	112157.835636989\\
30	87800.1396742205\\
40	65844.4903492122\\
50	50680.4553501273\\
60	43162.6798842125\\
70	35355.7953123166\\
80	32661.7650283694\\
90	26002.954959635\\
100	22773.0465341619\\
200	2411.72466093664\\
300	21.0573383250428\\
400	16.046915139724\\
500	12.8109912873503\\
600	11.2102195393165\\
700	10.7167490565531\\
800	10.1411754565593\\
900	9.30056376015061\\
1000	8.2707566654869\\
2000	5.24154994749335\\
3000	4.03795564502112\\
4000	3.26947862218561\\
5000	3.03295845707396\\
6000	2.77332330384995\\
7000	2.49505756211084\\
8000	2.36738517154882\\
9000	2.25014532078066\\
10000	2.21109518688523\\
20000	1.69815240236852\\
30000	1.62161878310671\\
40000	1.51748252946769\\
50000	1.47201423243165\\
60000	1.45660169781521\\
70000	1.44077296343534\\
80000	1.42369917319803\\
90000	1.40053639945081\\
90000	1.59392070952854\\
80000	1.62476181187348\\
70000	1.68447114549425\\
60000	1.7304245328959\\
50000	1.82134680815319\\
40000	1.97163710481217\\
30000	2.20338202233787\\
20000	2.47092552819889\\
10000	3.66438170316786\\
9000	3.88629035695664\\
8000	4.37476900292604\\
7000	4.48013062748919\\
6000	4.82971826032916\\
5000	5.46485996971368\\
4000	6.26061109589694\\
3000	7.79724788905368\\
2000	10.3707742851764\\
1000	14.5345960731987\\
900	15.2854850836583\\
800	16.928960911425\\
700	19.0934194379443\\
600	38.9268305307339\\
500	607.995945437792\\
400	2447.8486587807\\
300	10365.3366176473\\
200	25996.4007632041\\
100	52013.7262786537\\
90	65575.2480556314\\
80	76659.0671714717\\
70	87594.2707644928\\
60	114733.56462992\\
50	166787.067405686\\
40	170002.388063472\\
30	306333.320936457\\
20	395872.245625063\\
10	677071.315309723\\
9	699076.981019667\\
8	701474.473762693\\
7	822322.595975613\\
6	911948.680060861\\
5	923400.838429024\\
4	936081.988079509\\
3	945357.985362924\\
2	947776.774626934\\
1	951818.750313226\\
}--cycle;

\addplot [color=mycolor4,dashed,line width=1.5pt]
  table[row sep=crcr]{%
1	898168.581130224\\
2	802140.564420225\\
3	711397.054174735\\
4	631227.507176506\\
5	564596.867924508\\
6	509081.327498531\\
7	461185.52507959\\
8	419045.290316326\\
9	388003.069338418\\
10	354681.204123067\\
20	195414.667488606\\
30	133418.7035383\\
40	99905.9936620545\\
50	79327.3864603108\\
60	65543.7009034799\\
70	55436.5410869081\\
80	47602.4289447553\\
90	41155.4071614626\\
100	36113.5661551971\\
200	10498.8978786933\\
300	1302.02082425537\\
400	97.9984650481927\\
500	24.2219285693239\\
600	16.2337383070317\\
700	14.4287485468089\\
800	13.1608781833076\\
900	12.1746393095505\\
1000	11.361749131535\\
2000	7.27993084063056\\
3000	5.65572319796055\\
4000	4.74967312763189\\
5000	4.1522738928749\\
6000	3.73741043135761\\
7000	3.41604996101106\\
8000	3.18587668073005\\
9000	2.99136381697669\\
10000	2.8428530214657\\
20000	2.10067242226305\\
30000	1.83987392899804\\
40000	1.70657578675137\\
50000	1.62699148118933\\
60000	1.57452717762884\\
70000	1.53602078018279\\
80000	1.50792906440744\\
90000	1.4855385240109\\
};
\addlegendentry{$\xnb=\infty$}

\addplot [color=red,loosely dotted,line width=2pt]
  table[row sep=crcr]{%
1	1.3187\\
90000	1.3187\\
};
\addlegendentry{$\Jac$}

\end{axis}
\end{tikzpicture}
	~
	\subfigure[Zoomed version of the average cost for different $n$]{\scalebox{1}{
%
%

\definecolor{mycolor1}{rgb}{0.0, 0.5, 1.0}
\definecolor{mycolor2}{rgb}{0.0, 0.8, 0.6}
\definecolor{mycolor3}{rgb}{1.0, 0.5, 0.31}
\definecolor{mycolor4}{rgb}{0.13, 0.55, 0.13}%

\begin{tikzpicture}

\begin{axis}[%
width=2.2in,
height=1.6in,
at={(1.011111in,0.813889in)},
scale only axis,
xmode=log,
xmin=100,
xmax=1000000,
xminorticks=true,
xlabel={$N$},
xmajorgrids,
xminorgrids,
ymode=log,
ymin=1.2,
ymax=2,
ylabel={$J^{\text{AC}}_{n,N}$},
ymajorgrids,
ylabel style={yshift=-0.1cm},
legend style={legend cell align=left,align=left,draw=white!15!black,line width=1.0pt, font=\footnotesize}
]

\addplot [color=mycolor1, solid, mark=otimes*, mark size=2 ,line width=1.5pt]
  table[row sep=crcr]{%
1	90.0988050872139\\
2	74.8559092713465\\
3	58.3740012753949\\
4	45.7955902519346\\
5	35.8635147694922\\
6	28.5860308398038\\
7	22.5827573210655\\
8	18.0333956261914\\
9	15.3877213318488\\
10	13.4436958792473\\
20	6.14068215774133\\
30	4.34954158372674\\
40	3.46122965614533\\
50	2.97739665229225\\
60	2.62103541037543\\
70	2.38307666634709\\
80	2.22704450558327\\
90	2.06498996159715\\
100	1.96438382986823\\
200	1.52532404666544\\
300	1.39413594434313\\
400	1.34570792651888\\
500	1.31627993472894\\
600	1.29786965282214\\
700	1.28371427700216\\
800	1.27386956207888\\
900	1.26695036359398\\
1000	1.26243951330834\\
2000	1.24033826626804\\
3000	1.23266956870538\\
4000	1.22871956187396\\
5000	1.22633726262058\\
6000	1.22479092312255\\
7000	1.22379566139958\\
8000	1.22287742352561\\
9000	1.22215100223844\\
10000	1.22162047102615\\
20000 1.22162047102615\\
30000 1.22162047102615\\
40000 1.22162047102615\\
50000 1.22162047102615\\
60000 1.22162047102615\\
70000 1.22162047102615\\
80000 1.22162047102615\\
90000	1.22162047102615\\
100000 1.22162047102615\\
200000 1.22162047102615\\
300000 1.22162047102615\\
400000 1.22162047102615\\
500000 1.22162047102615\\
600000 1.22162047102615\\
700000 1.22162047102615\\
800000 1.22162047102615\\
900000 1.22162047102615\\
1000000 1.22162047102615\\
};
\addlegendentry{$n=2$};

\addplot [color=mycolor2, solid, mark=star, mark size=3, line width=1.5pt]
  table[row sep=crcr]{%
1	94.5768870629217\\
2	87.6422421294588\\
3	81.1060535945275\\
4	75.7166598326944\\
5	70.8550067501774\\
6	65.818793100411\\
7	61.4661914492361\\
8	57.5867852858906\\
9	54.506862634783\\
10	51.0690016796575\\
20	30.1255056462458\\
30	19.8540579079187\\
40	14.2465207906624\\
50	10.8717423880872\\
60	8.85011139713774\\
70	7.25012839633849\\
80	6.23207351231921\\
90	5.3655282834809\\
100	4.77894898497726\\
200	2.44822770289855\\
300	1.87735697901229\\
400	1.67372096917809\\
500	1.56577991405623\\
600	1.49838919411101\\
700	1.45440220544531\\
800	1.42286573705222\\
900	1.40333873367853\\
1000	1.38702187301104\\
2000	1.33566597760556\\
3000	1.32648669905007\\
4000	1.32290226192807\\
5000	1.32128509023382\\
6000	1.32039006435097\\
7000	1.31992148967192\\
8000	1.31948814984273\\
9000	1.31924603433781\\
10000  1.31906764370644\\
20000  1.31906764370644\\
30000  1.31906764370644\\
40000  1.31906764370644\\
50000  1.31906764370644\\
60000  1.31906764370644\\
70000  1.31906764370644\\
80000  1.31906764370644\\
90000  1.31906764370644\\
100000	1.31906764370644\\
200000 1.31906764370644\\
300000 1.31906764370644\\
400000 1.31906764370644\\
500000 1.31906764370644\\
600000 1.31906764370644\\
700000 1.31906764370644\\
800000 1.31906764370644\\
900000 1.31906764370644\\
1000000   1.31906764370644\\
};
\addlegendentry{$n=10$};

 \addplot [color=mycolor3, solid, mark=diamond, mark size=3, line width=1.5pt]
  table[row sep=crcr]{%
1	95.0727659242603\\
2	89.3293464937204\\
3	83.9600007375867\\
4	78.0428561341892\\
5	73.7229205184179\\
6	70.4072926106444\\
7	66.6424865458847\\
8	63.334550659593\\
9	60.216269546597\\
10	58.0616678649646\\
20	42.5296820114253\\
30	34.3797500254576\\
40	29.669338715003\\
50	26.831507897844\\
60	24.6676317740388\\
70	22.7390049288413\\
80	21.4440070632338\\
90	20.3367569025039\\
100	19.2554694316222\\
200	13.76144920044\\
300	11.3359882537192\\
400	9.84420511083659\\
500	8.82623441506369\\
600	7.9885333739873\\
700	7.38758245792849\\
800	6.81009576743343\\
900	6.42393695130645\\
1000	6.02474149436536\\
2000	4.09354171022974\\
3000	3.23896547092362\\
4000	2.78533674555221\\
5000	2.53689020223726\\
6000	2.35122639488588\\
7000	2.20059192075798\\
8000	2.09602080177611\\
9000	2.00843218001926\\
10000	1.94483860948849\\
20000	1.63302526595443\\
30000	1.53134706861736\\
40000	1.4734347662596\\
50000	1.44301446129694\\
60000	1.42129288813096\\
70000	1.4058035303561\\
80000	1.39424179901048\\
90000	1.38593192917479\\
100000	1.379296445348\\
200000	1.34534237322102\\
300000	1.33368847028753\\
400000	1.32412199975997\\
500000	1.32166204586395\\
600000	1.32034235423453\\
700000  1.31986648762340\\
800000  1.31955634230423\\
800000  1.31920979847234\\
1000000 1.31906764370644\\
};
\addlegendentry{$n=100$};

\addplot [color=red,loosely dotted,line width=2pt]
  table[row sep=crcr]{%
100	1.3187\\
1000000	1.3187\\
};

\addlegendentry{$\Jac$}

\end{axis}
\end{tikzpicture}
	\caption{The objective performance $\JacnN$ is computed using \eqref{AC-LP-n,N} for Example~\ref{ex:LQG}. The red dotted line denoted by $\Jac$ is the optimal solution approximated by $n = 10^3$ and $N = 10^6$.}  
	\label{fig:LQG:ran}
\end{figure}

\subsubsection*{Randomized approach:} 			
We implement the methodology presented in Section~\ref{subsec:rand:MDP}, resulting in a finite random convex program as in \eqref{AC-LP-n,N}, where the uniform distribution on $K = S\times A = [-L,L]^2$ is used to draw the random samples. Figures~\ref{fig:LQG:ran:2}, \ref{fig:LQG:ran:10}, and \ref{fig:LQG:ran:100} visualize three cases with different number of basis functions $n \in \{2,10,100\}$, respectively. To show the impact of the additional norm constraint, in each case two approximation settings are examined: the constrained (regularized) one proposed in this article (i.e., $\xnb < \infty$), and the unconstrained one (i.e., $\xnb = \infty$). In the former we choose the bound suggested by \eqref{theta*:MDP2}. In the latter, the resulting optimization programs of \eqref{AC-LP-n,N} may happen to be unbounded, particularly when the number of samples $N$ is low; numerically, we capture the behavior of the unbounded $\xnb$ through a large bound such as $\theta = 10^6$. In each sub-figure, the colored tubes represent the results of $400$ independent experiments (shaded areas) as well as the mean value across different experiments (solid and dashed lines) of the objective performance $\JacnN$ as a function of the sample size $N$. 
		
Figure~\ref{fig:LQG:ran:tail} depicts a zoomed perspective of the means for the three cases of $n$. All the results in Figure~\ref{fig:LQG:ran} are obtained based on 400 independent simulation experiments. It is perhaps not surprising that the optimal value depicted in red dotted line is very close to the classical LQG example whose exact solution is analytically available. It can be seen that the randomized approximations asymptotically converge, as suggested by Theorem~\ref{thm:inf-fin}.  		

The simulation results suggest three interesting features concerning  $n$, the number of basis functions: The higher the number of basis functions,
\begin{enumerate}[label=(\roman*), itemsep = 1mm, topsep = -1mm]
	\item \label{sim:err} 
	the smaller the approximation error (i.e., asymptotic distance for $N\to\infty$ to the red dotted line),
	\item \label{sim:var}
	the lower the variance of approximation with respect to the sampling distribution for each $N$, and 
	\item \label{sim:conv}
	the slower the convergence behavior with respect to the sample size $N$.
\end{enumerate}
The features~\ref{sim:err} and \ref{sim:var} are positive impacts of increasing the number of basis functions. While \ref{sim:err} is predicted by Corollary~\ref{cor:MDP:inf-semi}, since the error due to the projection term becomes smaller, it is not entirely clear how to formally explain \ref{sim:var}. On the contrary, the feature~\ref{sim:conv} is indeed a negative impact, as a high number of basis functions requires a large number of samples $N$ to produce reasonable approximation errors. This phenomena can be justified through the lens of Corollary~\ref{cor:adp:semi-finite} where the approximation errors grows proportionally to $n$. 

	
\subsubsection*{Structural convex optimization:} 
Algorithm \ref{alg} was implemented with the parameters described in Corollary~\ref{cor:adp:smooth} leading to deterministic upper and lower bounds ($\Jnub$ and $\Jnlb$, respectively) for the cost function $\Jacn$, see also Theorem~\ref{thm:semi-fin:smooth}. These bounds are computationally appealing as they provide a posteriori bounds on the approximation error that often is significantly smaller than the a priori bounds given by Theorem~\ref{thm:semi-fin:smooth}. This behavior can be seen in the simulation results summarized in Figure~\ref{fig:LQG:smoothing} where the number of basis functions is $n=10$. Similar to Figure~\ref{fig:LQG:ran}, the red dotted line is the optimal value of the original infinite program \ref{primal-inf}, which we approximated by using $10^3$ basis functions and $10^6$ iterations of Algorithm~\ref{alg}; it coincides with the one from the randomized method.
\begin{figure}[t]
	\subfigure[A priori error $\varepsilon$ and a posteriori error $\Jnub-\Jnlb$]{\scalebox{1}{
%
%
\begin{tikzpicture}

\begin{axis}[%
width=2.2in,
height=1.8in,
at={(1.011111in,0.641667in)},
scale only axis,
xmin=10,
xmax=100000,
xmode=log,
xlabel={number of iterations $k$},
xmajorgrids,
xminorgrids,
ymin=0,
ymax=600,
ymode=log,
ylabel={prior $\&$ posterior error},
ymajorgrids,
ylabel style={yshift=-0.15cm},
legend style={legend cell align=left,align=left,draw=white!15!black,line width=1.0pt,font=\footnotesize}
]
\addplot [color=black,solid,line width=1.5pt]
  table[row sep=crcr]{%
10	536.129757314671\\
20	297.757439244727\\
30	209.453263910916\\
40	162.72697105966\\
50	133.599201535883\\
60	113.618546830154\\
70	99.0202124606833\\
80	87.8655207223592\\
90	79.0515988085919\\
100	71.9035232973631\\
200	38.3663375914169\\
300	26.4817232476722\\
400	20.3309563638865\\
500	16.5512539578754\\
600	13.9852128898149\\
700	12.1253407196401\\
800	10.7133433712918\\
900	9.6036263953016\\
1000	8.70775656335686\\
2000	4.56109548359483\\
3000	3.11906336071083\\
4000	2.38020824257256\\
5000	1.92921540328592\\
6000	1.62455856340165\\
7000	1.40460792895481\\
8000	1.23815997581314\\
9000	1.10769960273813\\
10000	1.00262523720079\\
20000	0.51969893006755\\
30000	0.353458689836964\\
40000	0.268758966720061\\
50000	0.217256676546119\\
60000	0.182565901433752\\
70000	0.157578014270967\\
80000	0.138704324779933\\
90000	0.123935179488576\\
100000	0.112056573667818\\
200000	0.0576958143278744\\
300000	0.0391007132359004\\
400000	0.0296601216738937\\
500000	0.0239338161100979\\
};
\addlegendentry{$\varepsilon$};

\addplot [color=black,dashed,line width=1.5pt]
  table[row sep=crcr]{%
10	24.3565005293573\\
20	17.5364265215274\\
30	13.2451063157308\\
40	9.31312109408406\\
50	9.03581976197761\\
60	7.29124989487846\\
70	6.85165949467133\\
80	6.73215972985745\\
90	7.91906192077684\\
100	6.43867285663217\\
200	3.45711494005305\\
300	1.81584251291752\\
400	1.55869807407919\\
500	1.46754111427825\\
600	1.66757043728\\
700	1.65550462437804\\
800	1.37002395686236\\
900	1.26175419668461\\
1000	1.33156112752499\\
2000	0.510705599116073\\
3000	0.320497504636412\\
4000	0.29995740865895\\
5000	0.285731965916484\\
6000	0.190662075258335\\
7000	0.169055891413946\\
8000	0.19042832425306\\
9000	0.184585055707421\\
10000	0.14371286558865\\
20000	0.126900649247346\\
30000	0.0655305859838577\\
40000	0.0253164317947498\\
50000	0.0334016204747398\\
60000	0.0405570683669749\\
70000	0.0271062817729779\\
80000	0.0168730222321867\\
90000	0.0179113676236098\\
100000	0.02217519505106\\
200000	0.00879183854352084\\
300000	0.00619908849403994\\
400000	0.00513727563791022\\
500000	0.00364347933428188\\
};
\addlegendentry{$\Jnub-\Jnlb$};

\end{axis}
\end{tikzpicture}
	\qquad 
	\subfigure[Upper bound $\Jnub$ and lower bound $\Jnlb$]{\scalebox{1}{
%
%
\begin{tikzpicture}

\begin{axis}[%
width=2.2in,
height=1.8in,
at={(1.011111in,0.641667in)},
scale only axis,
xmin=10,
xmax=100000,
xmode=log,
xlabel={number of iterations $k$},
xmajorgrids,
xminorgrids,
ymin=0.5,
ymax=3,
ylabel={posterior approximation},
ymajorgrids,
ylabel style={yshift=-0.15cm},
legend style={legend cell align=left,align=left,legend pos=north east,draw=white!15!black,line width=1.0pt,font=\footnotesize}
]
\addplot [color=black,solid,line width=1.5pt]
  table[row sep=crcr]{%
10	25.1527051087009\\
20	18.5453358562866\\
30	14.2530400670406\\
40	10.3615635341208\\
50	10.1650692726669\\
60	8.37690974774427\\
70	7.94622728775838\\
80	7.88020667877186\\
90	9.082242328564\\
100	7.63091987563802\\
200	4.69978256390843\\
300	3.07692493690422\\
400	2.8212316189458\\
500	2.73122526180391\\
600	2.94530887515802\\
700	2.93846732574994\\
800	2.65795277930856\\
900	2.55324120683406\\
1000	2.62579210015638\\
2000	1.8153459536679\\
3000	1.62867228923354\\
4000	1.6090745751905\\
5000	1.59625396327131\\
6000	1.50194678447926\\
7000	1.48000020459195\\
8000	1.50290052371669\\
9000	1.49728138060725\\
10000	1.45712194885304\\
20000	1.44419732361832\\
30000	1.38230623729533\\
40000	1.3432552583054\\
50000	1.35079180338104\\
60000	1.35837949527842\\
70000	1.34481075737979\\
80000	1.33488846388037\\
90000	1.33579752492145\\
100000	1.34013935255392\\
200000	1.32686085101654\\
300000	1.32434929714319\\
400000	1.32325490194924\\
500000	1.32179986534431\\
};
\addlegendentry{$\Jnub$};

\addplot [color=black,dashed,line width=1.5pt]
  table[row sep=crcr]{%
10	0.796204579343647\\
20	1.00890933475926\\
30	1.00793375130979\\
40	1.04844244003675\\
50	1.12924951068927\\
60	1.08565985286581\\
70	1.09456779308705\\
80	1.14804694891441\\
90	1.16318040778716\\
100	1.19224701900585\\
200	1.24266762385537\\
300	1.2610824239867\\
400	1.26253354486661\\
500	1.26368414752567\\
600	1.27773843787802\\
700	1.2829627013719\\
800	1.28792882244619\\
900	1.29148701014945\\
1000	1.29423097263139\\
2000	1.30464035455183\\
3000	1.30817478459713\\
4000	1.30911716653155\\
5000	1.31052199735482\\
6000	1.31128470922093\\
7000	1.310944313178\\
8000	1.31247219946363\\
9000	1.31269632489983\\
10000	1.31340908326439\\
20000	1.31729667437098\\
30000	1.31677565131148\\
40000	1.31793882651065\\
50000	1.3173901829063\\
60000	1.31782242691145\\
70000	1.31770447560681\\
80000	1.31801544164819\\
90000	1.31788615729784\\
100000	1.31796415750286\\
200000	1.31806901247302\\
300000	1.31815020864915\\
400000	1.31811762631133\\
500000	1.31815638601002\\
};
\addlegendentry{$\Jnlb$};

\addplot [color=red,loosely dotted,line width=2pt]
  table[row sep=crcr]{%
10	1.3187\\
100000	1.3187\\
};
\addlegendentry{$\Jac$}

\end{axis}
\end{tikzpicture}
	\caption{The results and error bounds are obtained by Algorithm~\ref{alg} with $n=10$ for Example~\ref{ex:LQG}. The red dotted line is the optimal solution computed as indicated in Figure~\ref{fig:LQG:ran}.}  
	\label{fig:LQG:smoothing}
\end{figure}
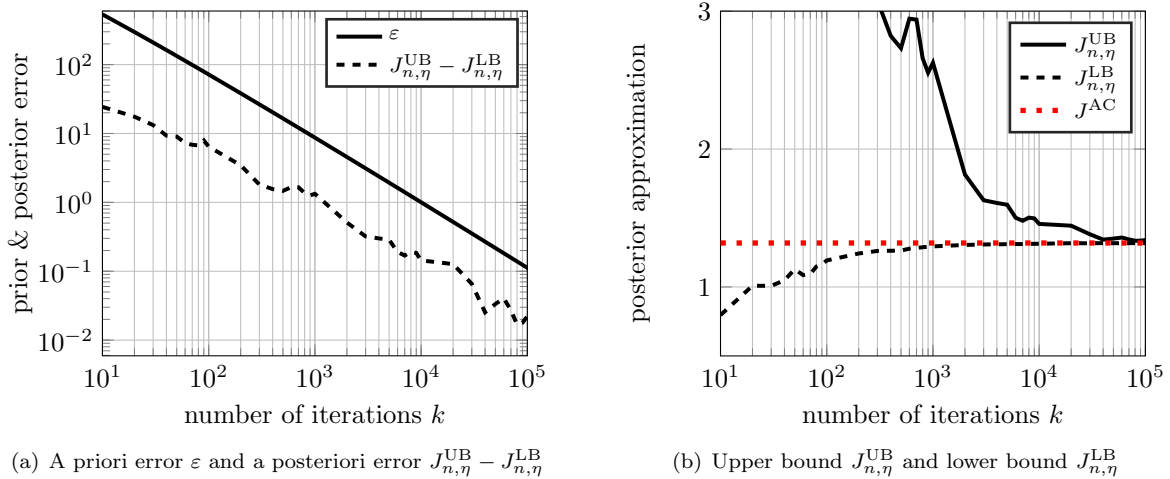

\subsection{Example 2: A fisheries management problem} 
\label{ex:fisheries}
A natural approximation approach toward dynamic programming problems goes through a discretization scheme (e.g., discretization the state and/or action spaces). The main objective of this example is to compare the proposed LP-based approximation of this article with more standard discretization schemes. To this end, we borrow an example from \cite[Section~1.3]{ref:Hernandez-96} and compare our results with the recent discretization method proposed by \cite{ref:Saldi-15}. Consider the population growth model, known as Ricker model, 
	\begin{align*} 
	s_{t+1} = \vartheta_1 a_t  \exp(-\vartheta_2 a_t + \xi_t), \quad t \in \N,
	\end{align*}
where $\vartheta_1, \vartheta_2 \in \Rp$, $s_t$ is the population size in season $t$, and $a_t$ is the population to be left for spawning for the next season, i.e., the difference $s_t -a_t$ is the amount of fish captured in season $t$. The running reward function, to be maximized is $\cost(a,s) = \varphi(s - a)$, where $\varphi$ is the so-called shifted isoelastic utility function $\varphi(z) := 3(z + 0.5)^{1/3} - (0.5)^{1/3}$ \cite[13, Section 4.1]{ref:Dufour-12}. The state space is $S = [\underline{\kappa}, \overline{\kappa}]$, for some $\underline{\kappa}, \overline{\kappa}\in\Rp$.
Since the population left for spawning cannot be greater than the total population, for each $s\in S$, the set of admissible actions is $A(s) = [\underline{\kappa}, s]$. To fulfill Assumption~\ref{a:CM}\ref{a:CM:K}, following the transformation suggested by \cite{ref:Saldi-15}, we equivalently reformulate the above problem using the dynamics 
	\begin{align*}
	s_{t+1} = \vartheta_1 \min(a_t, s_t) \exp( -\vartheta_2 \min( a_t, s_t) + \xi_t), \quad  t\in\N,
	\end{align*}
where the admissible actions set is now the state-independent set $A=[\underline{\kappa}, \overline{\kappa}]$, and the running reward function is $\cost(a,s) = \varphi(s - a)\indic{\{s\geq a\}}$. The noise process $(\xi_t)_{t\in\N}$ is a sequence of i.i.d.\ random variables which have a uniform density function $g$ supported on the interval $[0, \lambda]$. Thus, the corresponding kernel is
	\begin{align*}
	Q(B|s,a) = \int_B g\Big( \log \xi - \log\big( \vartheta_1 \min(a,s) \big) + \vartheta_2 \min(a,s) \Big)\frac{1}{\xi} \drv \xi, \quad \forall B \in \Borel{\R}.
	\end{align*}
Note that to make the model consistent, we must have $ \vartheta_1 a  \exp(-\vartheta_2 a + \xi)\in [\underline{\kappa}, \overline{\kappa}]$ for all $(a,\xi) \in  [\underline{\kappa}, \overline{\kappa}]\times [0,\lambda]$. By defining an appropriate change of coordinate similar to Lemma~\ref{lem:LQG}, Assumption~\ref{a:CM} are fulfilled; we refer the reader to \cite[Section~7.2]{ref:Saldi-15} for further information and detailed analysis. 
		
\paragraph{\emph{Simulation details:}} The chosen numerical values are $\lambda = 0.5$, $\vartheta_1 = 1.1$, $\vartheta_2 = 0.1$, $\overline{\kappa} = 7$, and $\underline{\kappa} = 0.005$.
		
\subsubsection*{Randomized approach:}
\begin{figure}[t]	
	\centering
	\scalebox{1}{
%

\definecolor{mycolor1}{rgb}{0.0, 0.5, 1.0}
\definecolor{mycolor2}{rgb}{0.0, 0.8, 0.6}
\definecolor{mycolor3}{rgb}{1.0, 0.5, 0.31}

\begin{tikzpicture}

\begin{axis}[%
width=3.0in,
height=2.0in,
at={(1.011111in,0.813889in)},
scale only axis,
xmode=log,
xmin=1,
xmax=1000,
xminorticks=true,
xlabel={$N$},
xmajorgrids,
xminorgrids,
ymode=log,
ymin=8,
ymax=100,
ylabel={$J^{\text{AC}}_{n,N}+10$},
ymajorgrids,
ylabel style={yshift=-0.3cm},
legend style={legend cell align=left,align=left,draw=white!15!black,line width=1.0pt,font=\footnotesize}
]

\addplot[area legend,solid,fill=mycolor1,opacity=0.3,draw=none,forget plot]
table[row sep=crcr] {%
x	y\\
1	60.2714090446456\\
2	21.8554017093566\\
3	10.4000775187875\\
4	10.0599599497917\\
5	9.99999999985442\\
6	9.99999999939065\\
7	9.99999999671157\\
8	9.9999999975459\\
9	9.99999999273531\\
10	9.9999999758598\\
20	9.99999991139656\\
30	9.99999991276262\\
40	9.99999993260893\\
50	9.99999989321967\\
60	9.9999999480932\\
70	9.99999995422039\\
80	9.99999994877977\\
90	9.99999991978368\\
100	9.99999990117392\\
200	9.99999984415569\\
300	9.99999979630737\\
400	9.99999994393545\\
500	9.99999999346454\\
600	9.99999999696135\\
700	9.99999999020227\\
800	9.99999829366524\\
900	9.9999986158764\\
1000	9.99999669910639\\
2000	9.99999998568734\\
3000	9.9999892100458\\
4000	9.99999992468276\\
5000	9.99999983334945\\
6000	9.99999942302657\\
7000	9.99999909003206\\
8000	9.99999847199698\\
9000	9.99999735859124\\
9000	9.99999919549118\\
8000	9.99999961746011\\
7000	9.99999977471244\\
6000	9.99999987701005\\
5000	9.9999999959124\\
4000	9.99999999913636\\
3000	9.99999999997259\\
2000	9.99999999998891\\
1000	9.99999999999882\\
900	9.99999999994029\\
800	9.99999999999979\\
700	9.99999999999992\\
600	9.99999999999989\\
500	9.99999999999988\\
400	9.9999999999999\\
300	9.99999999999995\\
200	9.99999999999987\\
100	9.99999999999985\\
90	9.99999999999975\\
80	9.99999999999955\\
70	9.99999999999948\\
60	9.9999999999996\\
50	9.99999999999922\\
40	9.99999999999883\\
30	9.9999999999967\\
20	10.000245544486\\
10	10.4889930872607\\
9	10.5196942809493\\
8	10.6165141763612\\
7	11.0978685740912\\
6	19.8757806859436\\
5	37.5359276660769\\
4	51.7389974282208\\
3	61.3683901801154\\
2	78.3949509625149\\
1	95.1440806603169\\
}--cycle;

\addplot [color=mycolor1, solid, mark=otimes*, mark size=2, line width=1.5pt]
  table[row sep=crcr]{%
1	73.8116476992506\\
2	52.2200402474537\\
3	34.3208400082223\\
4	22.2378459411505\\
5	15.9540088084741\\
6	13.1645557208202\\
7	11.5976590569994\\
8	10.7859782027953\\
9	10.5691168696462\\
10	10.3182842954979\\
20	10.0109439530958\\
30	10.0005830505906\\
40	10.0000330211688\\
50	10.0000020598734\\
60	10.0000020665347\\
70	10.0000020685042\\
80	9.99999997657583\\
90	9.99999996890386\\
100	9.99999996462579\\
200	9.99999993592086\\
300	9.99999992700283\\
400	9.99999992831566\\
500	9.99999992935017\\
600	9.99999993620589\\
700	9.99999988602315\\
800	9.99999966279497\\
900	9.99999953472218\\
1000	9.99999886917443\\
2000	9.99999995315971\\
3000	9.99999594678029\\
4000	9.9999999738941\\
5000	9.9999999284224\\
6000	9.99999968143622\\
7000	9.99999944507632\\
8000	9.99999912828565\\
9000	9.99999833870125\\
};

\addlegendentry{$n=2$};

\addplot[area legend,solid,fill=mycolor2,opacity=0.3,draw=none,forget plot]
table[row sep=crcr] {%
x	y\\
1	71.0635428418405\\
2	54.4907583447821\\
3	43.8661716108083\\
4	36.5780996871585\\
5	32.4371815502727\\
6	28.3410194752331\\
7	25.4192316649149\\
8	23.2143588995335\\
9	21.2853108616049\\
10	19.3548051842559\\
20	10.3237241051524\\
30	10.0667735195079\\
40	10.00258500703\\
50	9.99999999959783\\
60	9.99999999198904\\
70	9.99999996724115\\
80	9.9999999404122\\
90	9.99999984108334\\
100	9.9999997712211\\
200	9.99999933182473\\
300	9.9999993642045\\
400	9.99999920618785\\
500	9.99999884891009\\
600	9.99999860403341\\
700	9.99999805209628\\
800	9.99999728401995\\
900	9.99999738050084\\
1000	9.99999731508774\\
2000	9.99999517823715\\
3000	9.99999632827121\\
4000	9.99999680864103\\
5000	9.99999763401565\\
6000	9.99999543477016\\
7000	9.99999634807419\\
8000	9.9999721106351\\
9000	9.99996259447905\\
9000	9.99999999492866\\
8000	9.99999998691243\\
7000	9.99999979762313\\
6000	9.99999971724793\\
5000	9.99999992532724\\
4000	9.99999991812799\\
3000	9.99999990969219\\
2000	9.99999999267733\\
1000	9.99999999998894\\
900	9.99999999999656\\
800	9.99999999999805\\
700	9.99999999999928\\
600	9.99999999999931\\
500	9.99999999999914\\
400	9.9999999999993\\
300	9.99999999999974\\
200	9.99999999999981\\
100	10.020796222607\\
90	10.0332283363589\\
80	10.0470912295084\\
70	10.0896262225957\\
60	10.1279608944131\\
50	10.2047180037194\\
40	10.3215826390706\\
30	13.3008137624394\\
20	20.1588934476533\\
10	35.9545659296004\\
9	37.8018740545273\\
8	41.5077842536438\\
7	45.4033900498261\\
6	50.9735615277747\\
5	54.7345034400618\\
4	61.2250648726374\\
3	70.6520034208808\\
2	84.9041241759474\\
1	97.2182321130333\\
}--cycle;

  \addplot [color=mycolor2, solid, mark=star, mark size=3, line width=1.5pt]
  table[row sep=crcr]{%
1	82.8302920074995\\
2	67.1542776172087\\
3	56.5591469514311\\
4	48.5706429806797\\
5	42.5678486857135\\
6	38.308406871455\\
7	34.7669609046978\\
8	31.8140914008158\\
9	29.3760188749272\\
10	27.0893824553476\\
20	14.4770025764217\\
30	10.9856218709164\\
40	10.1956350490707\\
50	10.0906260026176\\
60	10.0513319774215\\
70	10.0283322078859\\
80	10.0130450130437\\
90	10.0082409696695\\
100	10.0052594603244\\
200	10.0002824626232\\
300	9.99999983215939\\
400	9.99999977329369\\
500	9.99999969802797\\
600	9.99999962415732\\
700	9.99999953512618\\
800	9.999999379231\\
900	9.99999930992182\\
1000	9.99999924153627\\
2000	9.99999846830891\\
3000	9.99999834891167\\
4000	9.99999861047825\\
5000	9.99999904288912\\
6000	9.99999790475513\\
7000	9.9999983374825\\
8000	9.99998961663984\\
9000	9.99998883275005\\
};
\addlegendentry{$n=10$};

\addplot[area legend,solid,fill=mycolor3,opacity=0.3,draw=none,forget plot]
table[row sep=crcr] {%
x	y\\
1	76.8690534540419\\
2	61.2612795934801\\
3	50.9930519029856\\
4	44.5583041889373\\
5	40.163891548069\\
6	36.4857986940591\\
7	33.6440284341841\\
8	31.6655163940452\\
9	29.7684873627083\\
10	28.1639296310229\\
20	20.3378230160899\\
30	17.2846215664875\\
40	15.7635474192051\\
50	14.8747689952225\\
60	14.1411520510188\\
70	13.5920074124269\\
80	13.1748272025842\\
90	12.8225204061867\\
100	12.5220851922587\\
200	10.8822236062475\\
300	10.2511127501878\\
400	10.1162646347967\\
500	10.0637035797715\\
600	10.0396433237211\\
700	10.0278261468543\\
800	10.0199354477202\\
900	10.014192369165\\
1000	10.010821781857\\
2000	10.0006949963043\\
3000	9.9999999775057\\
4000	9.99999612510167\\
5000	9.99998855645996\\
6000	9.99998089620539\\
7000	9.99997456712099\\
8000	9.9999724240661\\
9000	9.99996296492708\\
9000	10.0001412524095\\
8000	10.0002506308495\\
7000	10.0004807326371\\
6000	10.0008515200666\\
5000	10.0013740348419\\
4000	10.0022627213429\\
3000	10.0047931942489\\
2000	10.0101963405176\\
1000	10.0342813545412\\
900	10.0430817609071\\
800	10.0526797919846\\
700	10.0709304721895\\
600	10.0993999086913\\
500	10.1489977289123\\
400	10.3225245608799\\
300	10.7782785020674\\
200	11.5257227556754\\
100	13.6856221007\\
90	14.0385219185869\\
80	14.6623953784036\\
70	15.3325282914934\\
60	16.2368809933644\\
50	17.4377677770686\\
40	19.2330499972663\\
30	22.536244061436\\
20	27.3823442529203\\
10	41.8611148900239\\
9	43.3392185911427\\
8	46.1128813801639\\
7	50.5634111891501\\
6	54.7656103411951\\
5	58.8625472008749\\
4	65.2751983205023\\
3	73.8591821763853\\
2	86.757401716288\\
1	97.8666275970295\\
}--cycle;

  \addplot [color=mycolor3, solid, mark=diamond ,mark size=3, line width=1.5pt]
  table[row sep=crcr]{%
1	85.9230952118403\\
2	71.4364888956889\\
3	61.7623730955701\\
4	54.2716002676724\\
5	48.4561345240029\\
6	44.4160542125093\\
7	41.1146392918633\\
8	38.3189568150483\\
9	36.1117850737004\\
10	34.1075537436364\\
20	23.6993274998819\\
30	19.7249814939413\\
40	17.4317685952384\\
50	16.1115558609873\\
60	15.1738162498104\\
70	14.438667027732\\
80	13.8509500432951\\
90	13.4102633680365\\
100	13.0741633929513\\
200	11.2208750948707\\
300	10.4842572895281\\
400	10.2012922787993\\
500	10.1081323763323\\
600	10.0679435853212\\
700	10.0485176454434\\
800	10.0358193329015\\
900	10.0281521173758\\
1000	10.0223718339653\\
2000	10.0051981810512\\
3000	10.0018838209429\\
4000	10.0008525584599\\
5000	10.0004302409477\\
6000	10.0002447192102\\
7000	10.0001284627817\\
8000	10.0000748358184\\
9000	10.0000455371626\\
};
\addlegendentry{$n=100$};

\addplot [color=red,loosely dotted,line width=2.5pt]
  table[row sep=crcr]{%
1	10\\
900000	 10\\
};
\addlegendentry{$\Jac$}

\end{axis}
\end{tikzpicture}
	\caption{The objective performance $\JacnN$ is computed using \eqref{AC-LP-n,N} for Example~\ref{ex:fisheries}.  The red dotted line is the optimal value approximated by $n = 10^3$ and $N = 10^6$, which amounts to 0 as also reported in \cite{ref:Saldi-15}.}
	\label{fig:fishery:rand}
\end{figure}
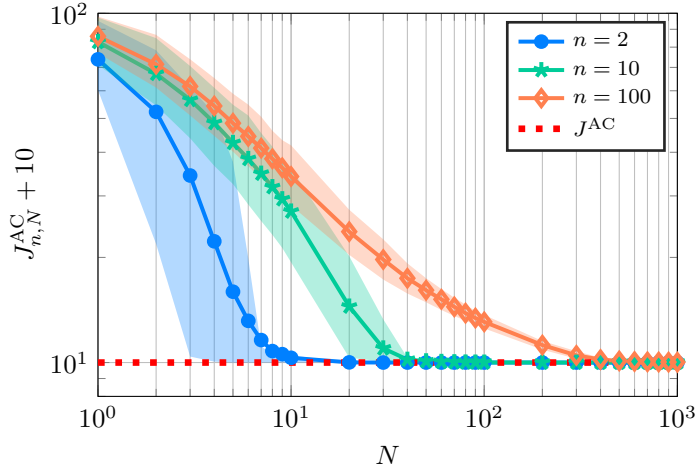		
We implement the methodology presented in Section~\ref{subsec:rand:MDP}, resulting in a finite random convex program \eqref{AC-LP-n,N}, where the uniform distribution on $K = S\times A = [\underline{\kappa},\overline{\kappa}]^2$ is used to draw the random samples. Figure~\ref{fig:fishery:rand} illustrates three cases with the number of basis functions $n \in \{2,10,100\}$ and the bound \eqref{theta*:MDP2}. The colored tubes represent the results between $[10\%,90\%]$ quantiles (shaded areas) as well as the means (solid lines) across $400$ independent experiments of the objective performance $\JacnN$ as a function of the sample size $N$. It is interesting to note that in this example the optimal solution is captured even with $2$ basis functions and only $N = 20$ random samples. This becomes even more attractive when we compare the results with a direct discretization scheme depicted in \cite[Figure~2]{ref:Saldi-15}. 

\subsubsection*{Structural convex optimization:}  
Similar to the LQG example in Section~\ref{ex:LQG}, we also implement the smoothing methodology for the case of $n = 10$. The simulation results are reported in Figure~\ref{fig:fishery:smoothing}. 

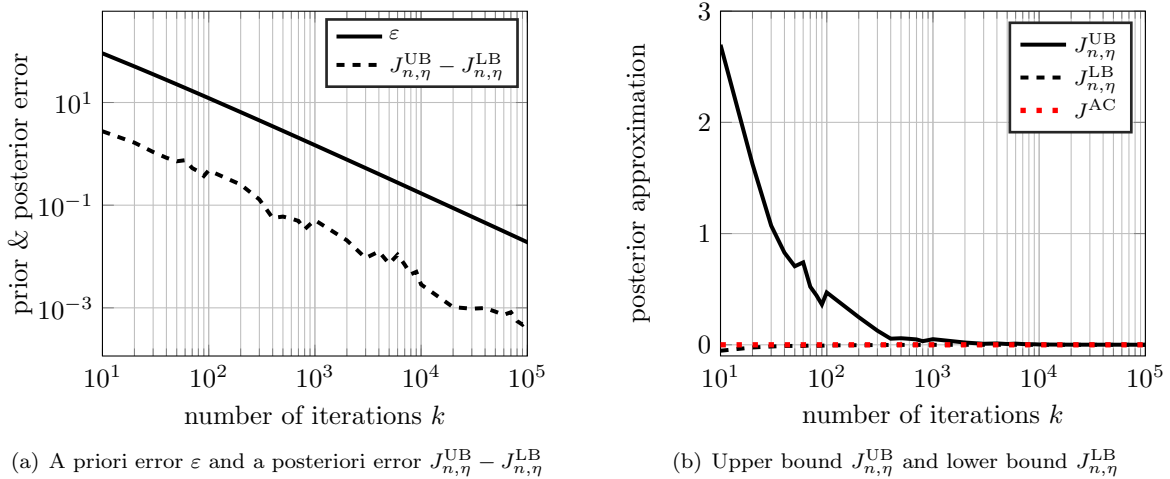
\begin{figure}[t]
	\subfigure[A priori error $\varepsilon$ and a posteriori error $\Jnub-\Jnlb$]{\scalebox{1}{
%
%
\begin{tikzpicture}

\begin{axis}[%
width=2.2in,
height=1.8in,
at={(1.011111in,0.641667in)},
scale only axis,
xmin=10,
xmax=100000,
xmode=log,
xlabel={number of iterations $k$},
xmajorgrids,
xminorgrids,
ymin=0,
ymax=600,
ymode=log,
ylabel={prior $\&$ posterior error},
ymajorgrids,
ylabel style={yshift=-0.15cm},
legend style={legend cell align=left,align=left,draw=white!15!black,line width=1.0pt,font=\footnotesize}
]
\addplot [color=black,solid,line width=1.5pt]
  table[row sep=crcr]{%
10	90.7596988873165\\
20	50.4063697325145\\
30	35.4576514031928\\
40	27.5475115641207\\
50	22.6165676646474\\
60	19.2341086084525\\
70	16.7628047887621\\
80	14.8744638585725\\
90	13.3823841724815\\
100	12.1723100224072\\
200	6.49491059977694\\
300	4.483003482218\\
400	3.44176046714649\\
500	2.80190712795399\\
600	2.36751050897174\\
700	2.05265889084282\\
800	1.81362652235531\\
900	1.62576619993161\\
1000	1.47410735437071\\
2000	0.772132793037242\\
3000	0.528015936747699\\
4000	0.402937593601818\\
5000	0.326590505080941\\
6000	0.275016154676782\\
7000	0.237781438079292\\
8000	0.209604013726553\\
9000	0.187518807967235\\
10000	0.169731115595792\\
20000	0.087978115751977\\
30000	0.0598358544320587\\
40000	0.0454973179960288\\
50000	0.0367786653603076\\
60000	0.0309059785953666\\
70000	0.0266758616911061\\
80000	0.0234807971207534\\
90000	0.0209805772841802\\
100000	0.0189696873296157\\
200000	0.00976713388785029\\
300000	0.00661923062763622\\
400000	0.00502106405626882\\
500000	0.00405167669644238\\
};
\addlegendentry{$\varepsilon$};

\addplot [color=black,dashed,line width=1.5pt]
  table[row sep=crcr]{%
10	2.75114402364085\\
20	1.64989484200388\\
30	1.08662273474772\\
40	0.838413529167986\\
50	0.716303058391313\\
60	0.751629176764794\\
70	0.527843195838203\\
80	0.453976846348352\\
90	0.370513448724604\\
100	0.475541564126903\\
200	0.252205934333526\\
300	0.129153469504749\\
400	0.056317239774328\\
500	0.0599573624198011\\
600	0.0546622918758253\\
700	0.0493018029438565\\
800	0.0338735832992433\\
900	0.0423541661739733\\
1000	0.0510683108240164\\
2000	0.020656372203495\\
3000	0.00933625317231349\\
4000	0.0124975949205784\\
5000	0.00724609981313697\\
6000	0.0108189115952417\\
7000	0.00695926150953026\\
8000	0.00451336505017449\\
9000	0.00501445742333247\\
10000	0.00285021265837832\\
20000	0.00103120305498905\\
30000	0.000962708277496393\\
40000	0.00099187467494551\\
50000	0.000843458013181959\\
60000	0.000731958241294043\\
70000	0.000820653367694514\\
80000	0.000552985426491319\\
90000	0.000470169711212189\\
100000	0.000549664006081856\\
200000	0.000161542871196692\\
300000	0.000157478416186615\\
400000	0.000114286160560574\\
500000	9.27186052237982e-05\\
};
\addlegendentry{$\Jnub-\Jnlb$};

\end{axis}
\end{tikzpicture}
	\qquad
	\subfigure[Upper bound $\Jnub$ and lower bound $\Jnlb$]{\scalebox{1}{
%
%
\begin{tikzpicture}

\begin{axis}[%
width=2.2in,
height=1.8in,
at={(1.011111in,0.641667in)},
scale only axis,
xmin=10,
xmax=100000,
xmode=log,
xlabel={number of iterations $k$},
xmajorgrids,
xminorgrids,
ymin=-0.1,
ymax=3,
ylabel={posterior approximation},
ymajorgrids,
ylabel style={yshift=-0.15cm},
legend style={legend cell align=left,align=left,legend pos=north east,draw=white!15!black,line width=1.0pt,font=\footnotesize}
]
\addplot [color=black,solid,line width=1.5pt]
  table[row sep=crcr]{%
10	2.69782240051088\\
20	1.62579317933833\\
30	1.07115935272095\\
40	0.828171926336044\\
50	0.70607517282797\\
60	0.740878175245887\\
70	0.522115480764568\\
80	0.444594005149921\\
90	0.362379183009112\\
100	0.470639892582908\\
200	0.248729379903276\\
300	0.127021159055569\\
400	0.0549353014708736\\
500	0.0590281281029852\\
600	0.0536519263944911\\
700	0.0483419461767818\\
800	0.0325118716723125\\
900	0.0416840173242761\\
1000	0.05061899782999\\
2000	0.0204534961184887\\
3000	0.00915265017565384\\
4000	0.0123235985696712\\
5000	0.00716959309992658\\
6000	0.010693155730566\\
7000	0.00685894282273861\\
8000	0.00442100487590579\\
9000	0.004930653756291\\
10000	0.00279382182970044\\
20000	0.00101572126869705\\
30000	0.000955529434186856\\
40000	0.000982917919282597\\
50000	0.000836282492199456\\
60000	0.000721253912624484\\
70000	0.000815445567292819\\
80000	0.000543737656212934\\
90000	0.000465564036385646\\
100000	0.000543300400007567\\
200000	0.000157944041236142\\
300000	0.000154977794771278\\
400000	0.000112678016109268\\
500000	9.1779790071263e-05\\
};

\addlegendentry{$\Jnub$};

\addplot [color=black,dashed,line width=1.5pt]
  table[row sep=crcr]{%
 10	-0.0533216231299718\\
20	-0.0241016626655506\\
30	-0.0154633820267671\\
40	-0.0102416028319417\\
50	-0.0102278855633426\\
60	-0.0107510015189079\\
70	-0.0057277150736354\\
80	-0.00938284119843086\\
90	-0.0081342657154919\\
100	-0.00490167154399528\\
200	-0.00347655443025035\\
300	-0.00213231044917997\\
400	-0.00138193830345437\\
500	-0.000929234316815946\\
600	-0.00101036548133423\\
700	-0.000959856767074679\\
800	-0.00136171162693085\\
900	-0.000670148849697179\\
1000	-0.000449312994026358\\
2000	-0.000202876085006332\\
3000	-0.000183602996659646\\
4000	-0.00017399635090721\\
5000	-7.65067132103906e-05\\
6000	-0.000125755864675721\\
7000	-0.000100318686791647\\
8000	-9.23601742686991e-05\\
9000	-8.38036670414648e-05\\
10000	-5.63908286778793e-05\\
20000	-1.54817862920044e-05\\
30000	-7.17884330953662e-06\\
40000	-8.95675566291295e-06\\
50000	-7.1755209825029e-06\\
60000	-1.07043286695595e-05\\
70000	-5.20780040169506e-06\\
80000	-9.24777027838479e-06\\
90000	-4.60567482654289e-06\\
100000	-6.36360607428934e-06\\
200000	-3.59882996055008e-06\\
300000	-2.50062141533679e-06\\
400000	-1.60814445130632e-06\\
500000	-9.388151525352e-07\\
};
\addlegendentry{$\Jnlb$};

\addplot [color=red,loosely dotted,line width=2pt]
  table[row sep=crcr]{%
10	0\\
100000	0\\
};
\addlegendentry{$\Jac$}

\end{axis}
\end{tikzpicture}
	\caption{The results and error bounds are obtained by Algorithm~\ref{alg} with $n=10$ for Example~\ref{ex:fisheries}. The red dotted line is the optimal solution computed as indicated in Figure~\ref{fig:fishery:rand}.}  
	\label{fig:fishery:smoothing}
\end{figure}

\appendix
\section{Infinite-Horizon Discounted-Cost Problems} \label{app:discouted:cost}
In the Markov decision process setting, introduced in Section~\ref{subsec:AC-setting}, let us consider \emph{long-run $\tau$-discounted cost} (DC) problems with the discount factor $\tau\in(0,1)$ and initial distribution $\nu\in\mathcal{P}(X)$ described as
	\begin{equation} \label{eq:discounted:problem}
	\Jdc(\nu)\Let \inf_{\pi\in\Pi} \lim_{n\to\infty} \Expecpi{\sum_{t=0}^{n-1}\tau^t \cost(x_{t},a_{t})}.
	\end{equation}
		
As in the average cost setting, in Section~\ref{sec:problem:statement}, we assume that the control model satisfies Assumption~\ref{a:CM}. We refer to \cite[Chapter~4]{ref:Hernandez-96} and \cite[Chapter~8]{ref:Hernandez-99} for a detailed exposition and required technical assumptions in more general settings. As for the AC problems, it is well known that the DC problem~\eqref{eq:discounted:problem} can be alternatively characterized by means of infinite LPs \eqref{primal-inf} and \eqref{dual-inf} introduced in Section~\ref{subsec:dual-pair}, where
	\begin{align}
	\label{AC-setting:discounted}
	\left\{
	\begin{array}{l}
		(\X,\C) \Let (\Cont(S),\Meas(S)) \\
		(\B,\Y) \Let (\Cont(K),  \Meas(K)) \\
		\cone \Let \Cont_{+}(K)\\
		\cone^{*} \Let \Meas_{+}(K) \\
		c(B) = -\nu(B), \quad B\in\borel(S)\\
		b(s,a) = -\cost(s,a) \\
		\op : \X \ra \B, \quad \op x(s,a) \Let - x(s) + \tau Qx(s,a) \\
		\op^{*}: \Y \to \C, \quad \op^{*}y (B) \Let y(B\times A) - \tau y Q(B), \quad B\in \borel(S),
		\end{array}\right.
	\end{align}
	
\begin{Thm}[LP characterization {\cite[Theorem~6.3.8]{ref:Hernandez-96}}] \label{thm:equivalent:LP:discounted}
	Under Assumption~\ref{a:CM}, the optimal value $\Jdc$ of the DC problem in \eqref{eq:discounted:problem} can be characterized by the LP problem \eqref{primal-inf} in the setting \eqref{AC-setting:discounted}, in the sense that $\Jp = -\Jdc$.
\end{Thm}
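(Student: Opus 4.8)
The plan is to unwind the abstract conic program \eqref{primal-inf} with the concrete data \eqref{AC-setting:discounted} and recognise it as the classical linear-programming characterisation of discounted dynamic programming, whose optimal value is attained by the discounted value function. Inserting $\cone = \Cont_+(K)$, $b(s,a) = -\cost(s,a)$ and $\op x(s,a) = -x(s) + \tau Qx(s,a)$, the conic constraint $\op x \geqc{\cone} b$ becomes the pointwise \emph{subsolution} inequality $x(s) \le \cost(s,a) + \tau Qx(s,a)$ for all $(s,a) \in K$, while the objective reads $\inner{x}{c} = -\int_S x(s)\,\nu(\diff s)$. Consequently $-\Jp = \sup\big\{ \int_S x\,\diff\nu : x \in \Cont(S),\ x(s) \le \cost(s,a) + \tau Qx(s,a)\ \forall (s,a)\in K \big\}$, and it suffices to show that this supremum equals $\Jdc$.

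For the direction $-\Jp \le \Jdc$, I would fix any feasible (subsolution) $x$ and any admissible policy $\pi \in \Pi$ and iterate the defining inequality along trajectories. Taking expectations under $\Probpi{\cdot}$ --- where the constraint $\pi_t(A(s_t)\mid h_t) = 1$ guarantees that the inequality applies at the realised state--action pairs --- yields the telescoping bound
\begin{align*}
\int_S x\,\diff\nu \;\le\; \Expecpi{\sum_{t=0}^{n-1}\tau^t \cost(s_t,a_t)} + \tau^{n}\, \Expecpi{x(s_n)}.
\end{align*}
Since $S$ is compact by Assumption~\ref{a:CM}\ref{a:CM:K}, the continuous function $x$ is bounded, so the remainder $\tau^{n}\Expecpi{x(s_n)}$ vanishes as $n\to\infty$ because $\tau\in(0,1)$. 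Letting $n\to\infty$ (the cost series converges by nonnegativity of $\cost$) and then taking the infimum over $\pi\in\Pi$ gives $\int_S x\,\diff\nu \le \Jdc$; a supremum over all subsolutions $x$ delivers $-\Jp \le \Jdc$.

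The reverse inequality $-\Jp \ge \Jdc$ requires exhibiting an admissible subsolution that attains the value. For this I would invoke the discounted dynamic-programming machinery under Assumption~\ref{a:CM}: the Bellman operator $(Tx)(s) \Let \inf_{a\in A(s)}\{\cost(s,a) + \tau Qx(s,a)\}$ is a $\tau$-contraction on $(\Cont(S),\|\cdot\|_\infty)$ that maps continuous functions to continuous functions, so by Banach's fixed-point theorem it has a unique continuous fixed point $V\opt$, which is the optimal value function and satisfies $\int_S V\opt\,\diff\nu = \Jdc$. The fixed-point identity $V\opt = TV\opt$ forces $V\opt(s) \le \cost(s,a) + \tau QV\opt(s,a)$ for every $(s,a)\in K$, so $V\opt \in \Cont(S)$ is feasible; evaluating the objective at $V\opt$ gives $-\Jp \ge \int_S V\opt\,\diff\nu = \Jdc$. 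Together with the previous paragraph this proves $\Jp = -\Jdc$.

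The main obstacle is the second direction, specifically the two facts that $V\opt$ is genuinely \emph{continuous} (so that it lies in the feasible space $\Cont(S)$) and that $\int_S V\opt\,\diff\nu$ equals $\Jdc$, i.e.\ that $\inf_\pi$ and $\int\diff\nu$ may be interchanged. Both are where Assumption~\ref{a:CM} does the real work: the Lipschitz continuity of $Q$ and $\cost$ (Assumption~\ref{a:CM}\ref{a:CM:Q} and \ref{a:CM}\ref{a:CM:cost}), together with compactness of $K$, ensure that $T$ preserves continuity and that the infimum defining $T$ is attained on the compact sets $A(s)$, which via a measurable-selection argument produces an optimal stationary policy and legitimises the value identity. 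This is precisely the content of the cited \cite[Theorem~6.3.8]{ref:Hernandez-96}, so in a final write-up I would quote that result for the continuity and attainment steps rather than reprove the fixed-point and selection theory from scratch.
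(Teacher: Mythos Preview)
The paper does not prove this theorem at all: it is stated with a citation to \cite[Theorem~6.3.8]{ref:Hernandez-96} and treated as a known black-box result, with no argument supplied. Your proposal therefore goes well beyond what the paper does, by actually sketching the classical two-inequality argument (subsolutions are dominated by the cost of every policy; the Bellman fixed point is an admissible subsolution that attains the value). That sketch is correct and is essentially the route taken in the Hern\'andez-Lerma--Lasserre reference you cite at the end; in a write-up matching the paper's level of detail you could simply invoke the citation, as the paper itself does.
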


It is known that under similar conditions as in Assumption~\ref{a:CM} on the control model, the value function $u^{\star}$ in the $\tau$-discounted cost optimality equation is Lipschitz continuous \cite[Section~2.6]{ref:HernandezLerma-89} or \cite[Theorem~3.1]{ref:Dufour-13}. We use the norms similar to the AC-setting \eqref{AC:pairs}. The next step toward studying the approximation error \eqref{inf-semi error} for the DC-setting readily follows by Theorem~\ref{thm:inf-semi} combined with the following lemma.
		
		\begin{Lem}[DC semi-infinite regularity]\label{lem:MDP:DC}
			For the DC-problem \eqref{eq:discounted:problem}, characterized by the dual-pair vector spaces in \eqref{AC-setting:discounted}, under Assumption~\ref{a:CM} we have the operator norm $\|\op\| \leq 1+ \max\{L_Q,1\}\tau$, the inf-sup constant of Assumption~\ref{a:reg}\ref{a:reg:inf-sup} $\gamma = 1-\tau$, and the dual optimizer norm
			\begin{align}
			\label{yb:DC}
			\|y\opt\|_\wass \le \ynb = \frac{\xnb+ (1-\tau)^{-1}\|\cost\|_{\infty}}{(1-\tau) \xnb - \|\cost\|_{\lip}} \,.
			\end{align}
		\end{Lem}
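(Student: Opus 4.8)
The plan is to verify the three quantities---the operator norm, the inf-sup constant $\gamma$, and the dual optimizer bound $\ynb$---by the same route used for the AC problem in Lemmas~\ref{lem:operator} and \ref{lem:MDP:bd-dual}, but now tracking the discount factor $\tau$ throughout. First I would compute the operator norm of $\op x = -x + \tau Qx$. By the triangle inequality and the same argument as in Lemma~\ref{lem:operator},
\begin{align*}
\|\op\| = \sup_{\|x\|_\lip \le 1} \|{-x + \tau Qx}\|_\lip \le 1 + \tau \sup_{\|x\|_\lip \le 1}\|Qx\|_\lip \le 1 + \tau \max\{L_Q,1\},
\end{align*}
where the final bound uses Assumption~\ref{a:CM}\ref{a:CM:Q} for the Lipschitz part and the stochastic-kernel property $\|Qx\|_\infty \le \|x\|_\infty$ for the sup-norm part, exactly as in the AC computation.

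Next I would establish the inf-sup constant $\gamma = 1-\tau$. The adjoint is $\op^* y(B) = y(B\times A) - \tau\, yQ(B)$, and for $y \in \cone^* = \Meas_+(K)$ I want $\|\opn^* y\|_{\Rnorm^*} \ge \gamma\|y\|_\wass$. The key observation is that testing $\op^* y$ against the constant function $\ind$ gives $\inner{\ind}{\op^* y} = \inner{\ind}{y} - \tau\inner{\ind}{yQ} = (1-\tau)\|y\|_\wass$, since $\inner{\ind}{y} = y(K) = \|y\|_\wass$ for a nonnegative measure and $yQ$ is a positive measure of the same total mass. Because $\ind$ lies in the span of the basis (recall $x_0 = (1,0)$ in the MDP embedding), the component of $\opn^* y$ along this direction already yields the lower bound $(1-\tau)\|y\|_\wass$, so $\gamma = 1-\tau$ works for all sufficiently fine bases. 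This is the step where the DC case genuinely differs from AC: there the corresponding quantity was $\|y\|_\wass$ with no contraction factor, whereas here the discount produces the factor $1-\tau$, which is precisely why $\gamma$ is no longer arbitrarily large.

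Finally I would substitute into the dual optimizer bound \eqref{yb} of Proposition~\ref{prop:SD}. With $b = -\cost$ we have $\|b\| = \|\cost\|_\lip$, and the simple lower bound $\Jlb = -\xnb\|\cnew\|_{\Rnorm^*}$ together with $\|\cnew\|_{\Rnorm^*}$ reducing (as in the AC analysis, via the $c = -\nu$ data and the normalization of the constant basis direction) gives the numerator $\xnb + (1-\tau)^{-1}\|\cost\|_\infty$ after the appropriate $\omega \to 0$ limit used in Lemma~\ref{lem:MDP:bd-dual}; the denominator is $\gamma\xnb - \|b\| = (1-\tau)\xnb - \|\cost\|_\lip$, yielding \eqref{yb:DC}. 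The main obstacle I anticipate is getting the numerator constant $(1-\tau)^{-1}\|\cost\|_\infty$ exactly right: this requires carefully bounding the optimal value $\Jp = -\Jdc$ of the discounted problem, for which one uses that the discounted cost of any policy is at most $(1-\tau)^{-1}\|\cost\|_\infty$, and then feeding a sharper $\Jlb$ than the trivial one into \eqref{yb} rather than the crude $-\xnb\|\cnew\|_{\Rnorm^*}$. Reconciling which lower bound produces the stated numerator is the delicate bookkeeping step, but it is routine once the discounted-cost magnitude bound is in hand.
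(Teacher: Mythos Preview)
Your overall strategy matches the paper's proof: bound $\|\op\|$ by triangle inequality as in Lemma~\ref{lem:operator}, obtain $\gamma=1-\tau$ by testing against the constant function $\ind$, and then plug $\gamma$, $\|b\|=\|\cost\|_\lip$, and the discounted-cost lower bound $\Jlb=-(1-\tau)^{-1}\|\cost\|_\infty$ into \eqref{yb}. That is exactly what the paper does.

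However, you are importing two artifacts from the AC analysis that have no place in the DC setting and muddy the argument. First, in the DC setting \eqref{AC-setting:discounted} the decision space is $\X=\Cont(S)$, not $\R\times\Lip(S)$; there is no $x_0=(1,0)$ coordinate, so your justification that ``$\ind$ lies in the span of the basis (recall $x_0=(1,0)$)'' is wrong as stated. The paper simply takes $x=\ind$ in the inf--sup quotient, implicitly assuming the constant function is available in $\X_n$; your argument should do the same without reference to the AC product structure. Second, there is no $\omega\to 0$ limit here: that device in Lemma~\ref{lem:MDP:bd-dual} was needed only because the AC problem has the extra scalar variable $\rho$ with no a~priori norm constraint. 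In the DC case the numerator of \eqref{yb} comes out directly once you observe $\|c\|_*=\|\nu\|_\wass=1$ (so effectively $\|\cnew\|_{\Rnorm^*}\le 1$) and use the sharper lower bound $\Jlb=-(1-\tau)^{-1}\|\cost\|_\infty$, which you do correctly identify at the end. Drop the $\omega$ limit and the $(1,0)$ reference and the proof is clean.
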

		\begin{proof}
			With the norms considered and following a proof similar to Lemma~\ref{lem:operator}, the operator norm $\|\op\|$ can be upper bounded as
			$\| \op \| \leq 1+\tau.$ The \emph{inf-sup} condition, Assumption~\ref{a:reg}\ref{a:reg:inf-sup}, holds with $\gamma = 1-\tau$, since
			\begin{align*}
			\inf_{y \in \cone^*}\sup_{x \in \X_n} {\inner{\op x}{y} \over \|x\| \|y\|_{\wass}} \geq \inf_{y \in \cone^*} \frac{(1-\tau)\inner{\ind}{y}}{\| y \|_{\wass}} = 1-\tau.
			\end{align*}
			Moreover $\|\nu\|_{\wass} = 1$ since it is a probability measure. Thus, given the lower bound for the optimal value $\Jdcn \ge {-(1-\tau)^{-1}\|\cost\|_{\infty}}$, the assertion of Proposition~\ref{prop:SD} (i.e, the dual optimizers bound in \eqref{yb}) leads to the desired assertion \eqref{yb:DC}. 
		\end{proof}
		
		Note that when the norm constraint is neglected, the dual program enforces that any solution $\yn$ in the program \ref{dual-n} satisfies $\inner{x}{\op^{*}\yn -c}=0$ for all $x\in \X_{n}$ (cf. the program~\ref{dual-inf}). Assume that a constant function belongs to the set $\X_{n}$. Then, the constraint evaluated at the constant function reduces to $(1-\tau)\inner{\ind}{\yn} = (1-\tau)\|\yn\|_\wass = 1$. It is worth noting that this observation can consistently be captured by Lemma~\ref{lem:MDP:DC} when $\xnb$ tends to $\infty$, in which the bound \eqref{yb:DC} reduces to $\|\yn\|_{\wass} \le ({1-\tau})^{-1}$.

\bibliographystyle{siam}
\bibliography{ref,ref_Pey}

\begin{thebibliography}{10}

\bibitem{ref:Anderson-87}
{\sc E.~J. Anderson and P.~Nash}, {\em Linear {P}rogramming in
  {I}nfinite-dimensional {S}paces}, John Wiley \& Sons, 1987.

\bibitem{ref:Arapostathis-93}
{\sc A.~Arapostathis, V.~Borkar, E.~Fernndez-Gaucherand, M.~Ghosh, and
  S.~Marcus}, {\em Discrete-time controlled {M}arkov processes with average
  cost criterion: A survey}, SIAM Journal on Control and Optimization, 31
  (1993), pp.~282--344.

\bibitem{ref:Borkar-08}
{\sc A.~Basu and V.~S. Borkar}, {\em Stochastic control with imperfect models},
  SIAM Journal on Control and Optimization, 47 (2008), pp.~1274--1300.

\bibitem{ref:BenTal-09}
{\sc A.~Ben-Tal, L.~Ghaoui, and A.~Nemirovski}, {\em Robust Optimization},
  Princeton University Press, 2009.

\bibitem{ref:Bert-75}
{\sc D.~Bertsekas}, {\em Convergence of discretization procedures in dynamic
  programming}, IEEE Transactions on Automatic Control, 20 (1975),
  pp.~415--419.

\bibitem{ref:Bertsekas-12}
{\sc D.~P. Bertsekas}, {\em Dynamic Programming and Optimal Control, Vol. II},
  Athena Scientific, 4th~ed., 2012.

\bibitem{ref:Bertsekas-78}
{\sc D.~P. Bertsekas and S.~E. Shreve}, {\em Stochastic Optimal Control},
  vol.~139, Academic Press, Inc., 1978.

\bibitem{ref:Bertsekas-96}
{\sc D.~P. Bertsekas and J.~N. Tsitsiklis}, {\em Neuro-Dynamic Programming},
  Athena Scientific, 1st~ed., 1996.

\bibitem{ref:Bertsimas-08}
{\sc D.~Bertsimas, X.~V. Doan, and J.~Lasserre}, {\em Approximating integrals
  of multivariate exponentials: {A} moment approach}, Operations Research
  Letters, 36 (2008), pp.~205--210.

\bibitem{ref:CamGar-08}
{\sc M.~C. Campi and S.~Garatti}, {\em The exact feasibility of randomized
  solutions of uncertain convex programs}, SIAM Journal on Optimization, 19
  (2008), pp.~1211--1230.

\bibitem{ref:Dufour2-15}
{\sc O.~L.~V. Costa and F.~Dufour}, {\em A linear programming formulation for
  constrained discounted continuous control for piecewise deterministic
  {M}arkov processes}, Journal of Mathematical Analysis and Applications, 424
  (2015), pp.~892--914.

\bibitem{ref:Csiszar-75}
{\sc I.~Csisz{\'a}r}, {\em {$I$}-divergence geometry of probability
  distributions and minimization problems}, The Annals of Probability, 3
  (1975), pp.~146--158.

\bibitem{ref:FarVan-03}
{\sc D.~P. de~Farias and B.~Van~Roy}, {\em The linear programming approach to
  approximate dynamic programming}, Operations Research, 51 (2003),
  pp.~850--865.

\bibitem{ref:FarVan-04}
{\sc D.~P. De~Farias and B.~Van~Roy}, {\em On constraint sampling in the linear
  programming approach to approximate dynamic programming}, Mathematics of
  Operations Research, 29 (2004), pp.~462--478.

\bibitem{ref:Devolver-13}
{\sc O.~Devolder, F.~Glineur, and Y.~Nesterov}, {\em First-order methods of
  smooth convex optimization with inexact oracle}, Mathematical Programming,
  (2013), pp.~1--39.

\bibitem{ref:Dufour-12}
{\sc F.~Dufour and T.~Prieto-Rumeau}, {\em Approximation of {M}arkov decision
  processes with general state space}, Journal of Mathematical Analysis and
  Applications, 388 (2012), pp.~1254--1267.

\bibitem{ref:Dufour-13}
{\sc F.~Dufour and T.~Prieto-Rumeau}, {\em Finite linear programming
  approximations of constrained discounted {M}arkov decision processes}, SIAM
  Journal on Control and Optimization, 51 (2013), pp.~1298--1324.

\bibitem{ref:Dufour-14}
\leavevmode\vrule height 2pt depth -1.6pt width 23pt, {\em Stochastic
  approximations of constrained discounted {M}arkov decision processes},
  Journal of Mathematical Analysis and Applications, 413 (2014), pp.~856--879.

\bibitem{ref:Dufour-15}
\leavevmode\vrule height 2pt depth -1.6pt width 23pt, {\em Approximation of
  average cost {M}arkov decision processes using empirical distributions and
  concentration inequalities}, Stochastics, 87 (2015), pp.~273--307.

\bibitem{ref:Ern-04}
{\sc A.~Ern and J.-L. Guermond}, {\em Theory and Practice of Finite Elements},
  Springer, 2004.

\bibitem{ref:Farouki-12}
{\sc R.~T. Farouki}, {\em The {B}ernstein polynomial basis: A centennial
  retrospective}, Computer Aided Geometric Design, 29 (2012), pp.~379 -- 419.

\bibitem{ref:HernandezLerma-89}
{\sc O.~Hern{\'a}ndez-Lerma}, {\em Adaptive {M}arkov Control Processes},
  vol.~79, Springer, 1989.

\bibitem{ref:Hernandez-03}
{\sc O.~Hern{\'a}ndez-Lerma, J.~Gonz{\'a}lez-Hern{\'a}ndez, and
  R.~L{\'o}pez-Mart{\'i}nez}, {\em Constrained average cost {M}arkov control
  processes in {B}orel spaces}, SIAM Journal on Control and Optimization, 42
  (2003), pp.~442--468.

\bibitem{ref:Hernandez-96}
{\sc O.~Hern{\'a}ndez-Lerma and J.~Lasserre}, {\em Discrete-Time Markov Control
  Processes: Basic Optimality Criteria}, Springer, 1996.

\bibitem{ref:Hernandez-98}
{\sc O.~Hern{\'a}ndez-Lerma and J.~Lasserre}, {\em Approximation schemes for
  infinite linear programs}, SIAM Journal on Optimization, 8 (1998),
  pp.~973--988.

\bibitem{ref:Hernandez-99}
{\sc O.~Hern{\'a}ndez-Lerma and J.~Lasserre}, {\em Further Topics on
  Discrete-Time Markov Control Processes}, Springer, 1999.

\bibitem{ref:chapter:Lerma}
{\sc O.~Hern{\'a}ndez-Lerma and J.~B. Lasserre}, {\em The linear programming
  approach}, in Further Topics on Discrete-Time {M}arkov Control Processes,
  Springer, 1999, pp.~203--249.

\bibitem{ref:Kariot-13}
{\sc N.~Kariotoglou, S.~Summers, T.~Summers, M.~Kamgarpour, and J.~Lygeros},
  {\em Approximate dynamic programming for stochastic reachability}, in
  European Control Conference (ECC), 2013, pp.~584--589.

\bibitem{ref:Konda-03}
{\sc V.~R. Konda and J.~N. Tsitsiklis}, {\em On actor-critic algorithms}, SIAM
  Journal on Control and Optimization, 42 (2003), pp.~1143--1166.

\bibitem{ref:Kush-03}
{\sc H.~Kushner and G.~G. Yin}, {\em Stochastic Approximation and Recursive
  Algorithms and Applications}, vol.~35, Springer, 2003.

\bibitem{ref:Lai-92}
{\sc H.~Lai and S.~Wu}, {\em Extremal points and optimal solutions for general
  capacity problems}, Mathematical Programming, 54 (1992), pp.~87--113.

\bibitem{ref:Lasserre-11}
{\sc J.~B. Lasserre}, {\em Moments, Positive Polynomials and Their
  Applications}, Imperial College Press, 2009.

\bibitem{ref:Luenberger_OptVecS}
{\sc D.~G. Luenberger}, {\em Optimization by Vector Space Methods}, John Wiley
  \& Sons Inc., 1969.

\bibitem{ref:Moh:motion-16}
{\sc P.~{Mohajerin Esfahani}, D.~Chatterjee, and J.~Lygeros}, {\em Motion
  planning for continuous time stochastic processes: A dynamic programming
  approach}, IEEE Transactions on Automatic Control, 61 (2016), pp.~2155--2170.

\bibitem{ref:MohChatLyg-15}
\leavevmode\vrule height 2pt depth -1.6pt width 23pt, {\em The stochastic
  reach-avoid problem and set characterization for diffusions}, Automatica, 70
  (2016), pp.~43--56.

\bibitem{ref:MohSut-13}
{\sc P.~{Mohajerin Esfahani}, T.~Sutter, and J.~Lygeros}, {\em Performance
  bounds for the scenario approach and an extension to a class of non-convex
  programs}, IEEE Transactions on Automatic Control, 60 (2015), pp.~46--58.

\bibitem{ref:NemShap-06}
{\sc A.~Nemirovski and A.~Shapiro}, {\em Scenario approximations of chance
  constraints}, in Probabilistic and Randomized Methods for Design under
  Uncertainty, G.~Calafiore and F.~Dabbene, eds., Springer, 2006, pp.~3--47.

\bibitem{ref:nesterov-book-04}
{\sc Y.~Nesterov}, {\em Introductory Lectures on Convex Optimization: A Basic
  Course}, Springer, 2004.

\bibitem{ref:Nest-05}
\leavevmode\vrule height 2pt depth -1.6pt width 23pt, {\em Smooth minimization
  of non-smooth functions}, Mathematical Programming, 103 (2005), pp.~127--152.

\bibitem{ref:Olver-09}
{\sc S.~Olver}, {\em On the convergence rate of a modified {F}ourier series},
  Mathematics of Computation, 78 (2009), pp.~1629--1645.

\bibitem{ref:PrandiniHu-2007}
{\sc M.~Prandini and J.~Hu}, {\em Stochastic reachability: Theory and numerical
  approximation}, in Stochastic Hybrid Systems, CRC Press, 2007, pp.~107--137.

\bibitem{ref:Saldi-15}
{\sc N.~{Saldi}, S.~{Y{\"u}ksel}, and T.~{Linder}}, {\em Asymptotic optimality
  of finite approximations to {M}arkov decision processes with general state
  and action spaces}.
\newblock available at arXiv:1503.02244, 2015.

\bibitem{ref:Raginsky-15}
{\sc E.~Shafieepoorfard, M.~Raginsky, and S.~P. Meyn}, {\em Rationally
  inattentive control of {M}arkov processes}, SIAM Journal on Control and
  Optimization, 54 (2016), pp.~987--1016.

\bibitem{ref:Sion-58}
{\sc M.~Sion}, {\em On general minimax theorems}, Pacific Journal of
  Mathematics, 8 (1958), pp.~171--176.

\bibitem{ref:SutMoh-14}
{\sc T.~Sutter, P.~{Mohajerin Esfahani}, and J.~Lygeros}, {\em Approximation of
  constrained average cost {M}arkov control processes}, in 53rd IEEE Conference
  on Decision and Control (CDC), Dec 2014, pp.~6597--6602.

\bibitem{ref:TsitVan-97}
{\sc J.~N. Tsitsiklis and B.~V. Roy}, {\em An analysis of temporal-difference
  learning with function approximation}, IEEE Transactions on Automatic
  Control, 42 (1997), pp.~674--690.

\bibitem{ref:Villani}
{\sc C.~Villani}, {\em Topics in {O}ptimal {T}ransportation}, American
  Mathematical Society, 2003.

\end{thebibliography}

\end{document}